\newtheorem{theorem}{Theorem}[section]
\newtheorem{lemma}[theorem]{Lemma}
\newtheorem{proposition}[theorem]{Proposition}
\newtheorem{corollary}[theorem]{Corollary}
\theoremstyle{definition}
\newtheorem{example}[theorem]{Example}
\theoremstyle{remark}
\newtheorem{remark}[theorem]{Remark}
\newcommand\mc[1]{\multirow{2}{*}{#1}}
\numberwithin{equation}{section}
\title{Classification of degree two curves in the symmetric square with positive self-intersection}
\author{Meritxell S\'aez}
\date{}
\begin{document}
\bibstyle{plain}

\maketitle

  \abstract{In this paper we give a precise classification of the pairs $(C,\widetilde{B})$ with $C$ a smooth curve of genus $g$ and $\widetilde{B}\subset C^{(2)}$ a curve of degree two and positive self-intersection. We prove that there are no such pairs if $g < p_a(\widetilde{B})< 2g-1$. We study the singularities and self-intersection of any degree two curve in $C^{(2)}$. Moreover, we give examples of curves with arithmetic genus in the Brill-Noether range and positive self-intersection on $C\times C$.}

{\parskip7pt
\noindent \textbf{Keywords}: Symmetric product, curve, irregular surface, curves in surfaces.

\noindent \textbf{MSC}:Primary 14H45; Secondary 14J25, 14H37, 14H10.}

\maketitle

\section{Introduction}

In this paper we study pairs of irreducible curves $(C,\widetilde{B})$ with $g(C)\geq 2$ and $\widetilde{B}\subset C^{(2)}$ of degree two ($\widetilde{B}\cdot C_P=2$). We give a complete classification when $\widetilde{B}^2>0$ is satisfied. In particular, we prove that there is no such pair if $p_a(\widetilde{B})<2g(C)-1$.

A fundamental tool for this paper is the main result in \cite{MS2} where a characterization of curves in the symmetric square with a certain degree is given (Theorem \ref{charcurves}).  

In \cite[Question 8.6]{MPP2} the authors wonder if there exists a curve $B$ in a surface $S$ with $q(S)<p_a(B)<2q(S)-1$ (the Brill-Noether range) and $B^2>0$. This question relates with the existence of a curve $B$ of genus $q<p_a(B)<2q-1$ that generates an abelian variety of dimension $q$ (see \cite{Pi2}). We find a bound on the degree of such a curve $B$ lying in the symmetric square. For large $g(C)$ the bound suggests that such a curve should have low degree. This motivates the study of low degree curves in the symmetric square. In this paper we study the degree two case and in a future paper we will consider the degree three case (\cite{MS4}). We find that there is no curve with arithmetic genus in the Brill-Noether range, positive self-intersection and degree two in the symmetric square. When considering the preimages in $C\times C$ of some degree two curves we find examples of curves with arithmetic genus in the Brill-Noether range and positive self-intersection. Hence, answering the question in \cite{MPP2} as it has been done in \cite{PiAl}. We note that all such curves have arithmetic genus $2g(C)-2$, that is, the greatest possible genus in the Brill-Noether range.   

We prove first some preliminary results that have their own relevance, although they only play an auxiliary role in this paper.

Next, we study the geometry of degree two curves in the symmetric square with no regard to self-intersection or genus. In particular, we study in detail the possible singularities. We have two different cases depending on whether $\pi_C^*(\widetilde{B})$ is irreducible or not. If $\nu(\alpha)$ denotes the number of points in $C$ fixed by $\alpha \in {\rm Aut}(C)$, we show that for the reducible case the following holds.

\begin{proposition}\label{redtwo}
Let $\widetilde{B}\subset C^{(2)}$ be a degree two curve such that $\pi_C^*(\widetilde{B})$ is reducible. Then, there exists $\alpha\in {\rm Aut}(C)$ of degree at least three such that $\widetilde{B}=\{x+\alpha(x)\ |\ x\in C\}$.

Conversely, if $\alpha\in {\rm Aut}(C)$ has degree at least three, then $\widetilde{B}=\{x+\alpha(x)\ |\ x\in C\}\subset C^{(2)}$ with $\pi_C^*(\widetilde{B})$ being reducible.

Moreover, $\widetilde{B}$ has $\frac{1}{2}(\nu(\alpha^2)-\nu(\alpha))$ nodal singularities and normalization $C$. 
\end{proposition}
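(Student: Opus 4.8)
The plan is to work through the double cover $q\colon C\times C\to C^{(2)}$ (quotient by the swap $\sigma(x,y)=(y,x)$), under which $\pi_C^*(\widetilde B)=q^{-1}(\widetilde B)$ and $q^*(C_P)=(\{P\}\times C)+(C\times\{P\})$, remembering throughout that ${\rm Aut}(C)$ is finite since $g\ge 2$. Since $\widetilde B$ is irreducible, $q^{-1}(\widetilde B)$ is $\sigma$-stable, so its reducibility means $q^{-1}(\widetilde B)=D_1\cup\sigma(D_1)$ with $D_1\ne\sigma(D_1)$; each component has closed irreducible (hence surjective, being not a point) image in $\widetilde B$, and since the two degrees of $q$ restricted to $D_1,\sigma(D_1)$ sum to $\deg q=2$, symmetry forces $q|_{D_1}$ to be birational. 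The key computation is then: by the projection formula $q^*\widetilde B\cdot q^*C_P=2\,(\widetilde B\cdot C_P)=4$, while $\sigma$-symmetry makes the left-hand side equal $2(\deg p_1|_{D_1}+\deg p_2|_{D_1})$, so both projections restrict to degree-one maps $D_1\to C$. Normalizing $D_1$ gives $\widehat{D_1}\cong C$ simultaneously via $p_1$ and $p_2$, whence $D_1$ is the (smooth) graph $\Gamma_\alpha$ of $\alpha:=p_2\circ(p_1|_{\widehat{D_1}})^{-1}$ and $\widetilde B=q(D_1)=\{x+\alpha(x)\mid x\in C\}$. That $\alpha$ has order at least three is automatic: order $1$ gives $D_1=\Delta$, which is $\sigma$-fixed, and order $2$ gives $\sigma(\Gamma_\alpha)=\Gamma_{\alpha^{-1}}=\Gamma_\alpha$, both contradicting $D_1\ne\sigma(D_1)$.

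For the converse, given $\alpha$ of order $\ge3$, I would set $\widetilde B:=q(\Gamma_\alpha)$ and note $q^{-1}(\widetilde B)=\Gamma_\alpha\cup\sigma(\Gamma_\alpha)=\Gamma_\alpha\cup\Gamma_{\alpha^{-1}}$, two distinct components since $\alpha\ne\alpha^{-1}$, so $\pi_C^*(\widetilde B)$ is reducible; running the degree computation backwards (both projections on the graph $\Gamma_\alpha$ having degree one, and $\widetilde B\cdot C_P$ being independent of $P$ as the $C_P$ are algebraically equivalent) gives $\widetilde B\cdot C_P=2$. In both directions the normalization claim follows because $\phi\colon C\to\widetilde B$, $x\mapsto x+\alpha(x)$, factors as $C\xrightarrow{\sim}\Gamma_\alpha\xrightarrow{q}\widetilde B$, hence is a finite birational morphism from a smooth curve (once generic injectivity of $\phi$ is checked, below).

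The substantive work, which I expect to be the main obstacle, is the local analysis identifying $\operatorname{Sing}(\widetilde B)$ as the locus where $\phi$ is non-injective or non-immersive. Off the diagonal $q$ is \'etale, so $\phi$ is immersive there, and $\phi(x)=\phi(x')$ with $x\ne x'$ forces $x'=\alpha(x)$ and $\alpha^2(x)=x\ne\alpha(x)$; there are $\nu(\alpha^2)-\nu(\alpha)$ such $x$, grouped into $\tfrac12(\nu(\alpha^2)-\nu(\alpha))$ pairs $\{x,\alpha(x)\}$, giving that many double points, each with two branches equal to the $q$-images of smooth neighbourhoods of $(x,\alpha(x))$ and $(\alpha(x),x)$ in $\Gamma_\alpha$. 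Writing these in local coordinates on $C^{(2)}$, transversality of the two branches reduces to $(\alpha^2)'(x)\ne1$; and over the diagonal, at $\phi(x_0)=2x_0$ with $\alpha(x_0)=x_0$, coordinates $(s_1,s_2)=(t_1+t_2,t_1t_2)$ give the parametrization $t\mapsto((1+a)t+\cdots,\,at^2+\cdots)$ with $a=\alpha'(x_0)$, which is an injective immersed (hence smooth) branch precisely when $a\ne-1$, i.e. $a^2\ne1$. Both inequalities, $(\alpha^2)'(x)\ne1$ at fixed points of $\alpha^2$ and $a^2\ne1$ at fixed points of $\alpha$, are instances of one fact: a nontrivial finite-order automorphism — here $\alpha^2$, nontrivial exactly because $\operatorname{ord}\alpha\ge3$ — acts on the tangent space at any fixed point by a root of unity $\ne1$, by the standard linearization argument in characteristic zero. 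This also gives the generic injectivity of $\phi$, and hence that $C$ is the normalization, completing the argument; the delicate point is marshalling these local computations uniformly, since the diagonal fixed points must be shown to be \emph{smooth} points of $\widetilde B$ while the other periodic points give nodes.
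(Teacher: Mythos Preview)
Your proof is correct and follows essentially the same approach as the paper: identify the components of $\pi_C^*(\widetilde B)$ as graphs $\Gamma_\alpha,\Gamma_{\alpha^{-1}}$ via the degree count, locate the singularities as the non-injective locus of $x\mapsto x+\alpha(x)$, and verify nodality by local transversality. The only difference is packaging: the paper invokes its Proposition~\ref{diagtrans} and Corollary~\ref{grtrans} (transversality of graphs of automorphisms) as separate lemmas, whereas you derive both the nodality off the diagonal and the smoothness on the diagonal from the single principle that a nontrivial finite-order automorphism has derivative $\neq 1$ at each fixed point---a cleaner unification, though ultimately the same computation.
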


In the non-reducible case, using Theorem \ref{charcurves} and the fact that a degree two morphism is always Galois, we translate the study of pairs $(C, \widetilde{B})$ to the study of group actions on curves. Let $D_n$ denote the dihedral group of order $2n$. We prove the following.

\begin{theorem}\label{summarydihe}
Let $D$ be a curve, $i,j\in {\rm Aut}(D)$ two involutions with $\langle i,j \rangle =D_n$, $n\geq 3$. Let $C=D/\langle j \rangle$ and $B=D/\langle i \rangle$ be the quotient curves. Then, there is a degree one morphism $B\rightarrow C^{(2)}$ with image $\widetilde{B}$ such that $\widetilde{D}=\pi_C^*(\widetilde{B})$ is irreducible.

Conversely, if $\widetilde{B}\subset C^{(2)}$ is a degree two curve such that $\widetilde{D}=\pi_C^*(\widetilde{B})$ is irreducible and $B$ is the normalization of $\widetilde{B}$, then there exists a curve $D$ as above.

Moreover, $\widetilde{B}$ has $\frac{1}{4}(\nu((ij)^2)-\nu(ij))$ nodal singularities, $\widetilde{D}$ has $\frac{1}{2}\nu((ij)^2)$ nodal singularities and $D$ is the normalization of $\widetilde{D}$.
\end{theorem}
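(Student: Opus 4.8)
The plan is to realise the two directions as mutually inverse constructions linking $\widetilde{B}\subset C^{(2)}$ to the dihedral data — with Theorem \ref{charcurves} and the fact that a degree-two cover is Galois supplying the passage between curves in $C^{(2)}$ and group actions — and then to extract the singularity counts purely from the fixed-point combinatorics of $D_n$. I expect the group theory and the birationality checks to be routine; the one genuinely delicate point should be showing that the singularities produced are \emph{ordinary nodes}.

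\emph{Forward direction.} Given $D,i,j$ with $\langle i,j\rangle=D_n$, $n\geq 3$, let $q\colon D\to C=D/\langle j\rangle$ be the quotient and set $\phi\colon D\to C\times C$, $\phi(x)=(q(x),q(i(x)))$. As $\phi(i(x))=\sigma(\phi(x))$ for the swap $\sigma$, this descends to $\bar\phi\colon B=D/\langle i\rangle\to C^{(2)}$ with image $\widetilde{B}=\{q(x)+q(i(x))\mid x\in D\}$; and since $\sigma(\phi(D))=\phi(D)$ one gets $\widetilde{D}:=\pi_C^*(\widetilde{B})=\phi(D)$, irreducible because $D$ is. I would then check $\widetilde{B}\cdot C_P=2$ for general $P$ by counting the two points $q(i(x_0)),q(i(j(x_0)))$ with $x_0\in q^{-1}(P)$ — distinct exactly because $(ij)^2\neq\mathrm{id}$, which is where $n\geq 3$ is used — and analyse the coincidences $\phi(x)=\phi(y)$, $x\neq y$, which force $y=j(x)$ and $(ij)^2(x)=x$ (a parallel computation handling $\bar\phi$). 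This makes $\phi$ and $\bar\phi$ birational onto their images, so $D$ and $B$ are the normalizations of $\widetilde{D}$ and $\widetilde{B}$ and $\bar\phi$ is the asserted degree-one morphism. (For $n\leq 2$ the same recipe yields the diagonal, or a graph of an involution, which has degree $1$.)

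\emph{Converse.} Starting from a degree-two $\widetilde{B}$ with $\widetilde{D}=\pi_C^*(\widetilde{B})$ irreducible, put $D:=$ the normalization of $\widetilde{D}$. The two projections give $f_1,f_2\colon D\to C$ of degree two (from $\widetilde{D}\cdot(\{P\}\times C)=2$, itself a consequence of $\widetilde{B}\cdot C_P=2$ and $\deg\pi_C=2$), with $f_2=f_1\circ i$ for the involution $i\in{\rm Aut}(D)$ lifting $\sigma|_{\widetilde{D}}$; here $i^2=\mathrm{id}$ because $D\to\widetilde{D}$ is birational, and $i\neq\mathrm{id}$ because $\widetilde{D}$ is not contained in the diagonal $\Delta$ (otherwise $\widetilde{B}\cdot C_P=1$). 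Since a degree-two cover is Galois, $f_1$ has a Galois involution $j$ with $D/\langle j\rangle=C$, while $D/\langle i\rangle$ is the normalization of $\widetilde{D}/\langle\sigma\rangle$, which is birational to $\widetilde{B}$, so $D/\langle i\rangle=B$. Being generated by two involutions, $\langle i,j\rangle$ is dihedral, and it is finite since $g(D)\geq 2g(C)-1\geq 2$ by Riemann--Hurwitz makes ${\rm Aut}(D)$ finite; write it $D_n$. To get $n\geq 3$ I would exclude $n=1$ (then $i=j$, so $f_2=f_1$ and $\widetilde{D}\subset\Delta$) and $n=2$ (then $i,j$ commute, $i$ descends to an involution $\bar i$ of $C$, and $\widetilde{D}$ is the graph of $\bar i$, so $\widetilde{B}=\{x+\bar i(x)\}$ has $\widetilde{B}\cdot C_P=1$) — both contradicting $\deg\widetilde{B}=2$.

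\emph{The singularity counts, and the main obstacle.} That $D$ is the normalization of $\widetilde{D}$ is now immediate. I would first observe that $\phi$ (hence $\bar\phi$) is an immersion everywhere: the only potential failure is at a point of $\mathrm{Fix}(j)\cap\mathrm{Fix}(iji)$, resp.\ $\mathrm{Fix}(i)\cap\mathrm{Fix}(j)$, and such a point would be fixed by a non-cyclic subgroup of $D_n$, which is impossible because a group acting on a curve with a fixed point embeds into the one-dimensional tangent space there and so is cyclic. Thus all singularities come from non-injectivity of the normalization. For $\widetilde{D}$ these are the pairs $\{x,j(x)\}$ with $x\in F:=\mathrm{Fix}((ij)^2)$; $F$ is $j$-stable (because $j(ij)^2j^{-1}=(ij)^{-2}$ has the same fixed locus) and disjoint from $\mathrm{Fix}(j)$ (the same fixed-point argument, now for $\langle j,(ij)^2\rangle$), so $j$ acts freely on $F$ and $\widetilde{D}$ has $\tfrac12\nu((ij)^2)$ singular points. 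Likewise $F\cap\mathrm{Fix}(i)=\emptyset$, and one finds $\bar\phi$ glues $[x]$ to $[j(x)]$ precisely for $x\in F\setminus\mathrm{Fix}(ij)$, with $[x]\neq[j(x)]$ there and — using $ji(x)=ij(x)$ on $F$ — with $\{x,i(x),j(x),ij(x)\}$ consisting of four distinct points that form the whole fibre of $D\to B$ over the glued point; hence the $\nu((ij)^2)-\nu(ij)$ points of $F\setminus\mathrm{Fix}(ij)$ fall into $\tfrac14(\nu((ij)^2)-\nu(ij))$ quadruples, one per singular point of $\widetilde{B}$. The step I expect to be hardest is to confirm that each of these singular points is an \emph{ordinary node} — i.e.\ that the two smooth local branches are transverse rather than tangent; I would do this either by a local coordinate computation at each point, or by checking that the resulting total $\delta$-invariant equals $p_a(\widetilde{D})-g(D)$, with $g(D)$ computed by Riemann--Hurwitz for $f_1$.
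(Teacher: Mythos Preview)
Your overall strategy matches the paper's: both directions hinge on Theorem~\ref{charcurves} together with the observation that degree-two covers are Galois, and the singularity count proceeds via the fixed points of $(ij)^2$ using cyclic stabilizers. Your direct exclusion of $n\in\{1,2\}$ is equivalent to the paper's use of Proposition~\ref{prop}\ref{productsmaller}, which shows the diagram fails to complete exactly when $|\langle i,j\rangle|>4$.

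There is, however, a genuine gap in your immersion argument for $\bar\phi$. Your claim that the only potential failure is at $\mathrm{Fix}(i)\cap\mathrm{Fix}(j)$ overlooks the points $x\in\mathrm{Fix}(ij)\setminus\mathrm{Fix}(i)$: there $q(x)=q(i(x))$, so $\bar\phi([x])\in\Delta_C$, and since $\pi_C$ is ramified along $\Delta_C$ the implication ``$\phi$ immersion $\Rightarrow$ $\bar\phi$ immersion'' breaks down. Concretely, if the branch of $\phi(D)$ through $(q(x),q(x))$ has tangent direction $(p,q)$, then $d(\pi_C\circ\phi)$ at $x$ lands in the line $z_1+z_2$ and vanishes precisely when $p+q=0$; nothing in your argument excludes this. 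The paper closes this gap only \emph{after} establishing that $\widetilde{D}$ has a node at $(q(x),q(x))$: the two branches are swapped by $\sigma$, so their tangents are $(p,q)$ and $(q,p)$, and transversality forces $p\neq\pm q$, whence $p+q\neq 0$. A short local computation with $\pi_C\colon(z_1,z_2)\mapsto(z_1+z_2,z_1z_2)$ then shows $\widetilde{B}$ is smooth (and tangent to $\Delta_C$) at these points. So your order of operations needs to be: nodes of $\widetilde{D}$ first, then smoothness of $\widetilde{B}$ along the diagonal.

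For the node verification itself, the paper's device is cleaner than either of your proposals and worth knowing: pull $\widetilde{D}$ back under $g\times g\colon D\times D\to C\times C$. The preimage consists of the four graphs $\Gamma_i,\Gamma_{ji},\Gamma_{ij},\Gamma_{jij}$, and by Corollary~\ref{grtrans} any two distinct graphs of automorphisms meet transversally. Since $g\times g$ is \'etale at the relevant points (this is exactly your cyclic-stabilizer observation, which the paper packages as Lemma~\ref{cycdihe}), the two local branches of $\widetilde{D}$ are \'etale images of transversal curves, hence transversal. Your $\delta$-invariant alternative can be made to work, but note that ``total $\delta$ equals $p_a(\widetilde{D})-g(D)$'' is a tautology; what you actually need is an \emph{independent} computation of $p_a(\widetilde{D})$---via adjunction on $C\times C$ once you know $\widetilde{D}^2$, the latter obtainable from $(\sum\Gamma_\bullet)^2$ upstairs---which then forces each $\delta_P=1$ and, combined with your two-branch count, gives nodes.
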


Then, we concentrate on degree two curves with positive self-intersection. We study all such curves getting the following Classification Theorem.

\begin{theorem}[Classification] \label{clasdeg2}  All pairs of curves $(C,\widetilde{B})$ with $C$ smooth, $\widetilde{B}\subset C^{(2)}$ such that $\widetilde{B}^2> 0$ and $\widetilde{B}\cdot C_P=2$ fall in one of the following cases:

\begin{enumerate}

\item[0.] $C$ is a curve of genus $2$ with the action of an automorphism of order $10$, $\alpha$, such that $\nu(\alpha)=1$, $\nu(\alpha^2)=3$, $\nu(\alpha^5)=6$, and $\widetilde{B}$ is the symmetrization of the graph of $\alpha$. There is a finite number of isomorphism classes of curves $C$ with such an automorphism. 

\end{enumerate}

\vskip 5pt

Let $B$ be the normalization of $\widetilde{B}$. In the following cases, $C=D/\langle j \rangle$ and $B=D/\langle i \rangle$ where $D$ is a smooth curve and $i,\ j\in {\rm Aut}(D)$ are two involutions such that $\langle i,j \rangle=D_n$ for $n\in\{10,6, 4\}$.  

\begin{enumerate}
\item For $\boldsymbol{n=10}$ there are three topological types of actions on $D$ that define three irreducible families with the following properties:

\vskip 4pt
{\setlength{\parindent}{-10pt} 
\small
\setlength{\tabcolsep}{4pt}

\setlength{\extrarowheight}{2pt}
\begin{tabular}{|c | c | c | c | c | c | c | c |}
\hline
\mc{$g(D)$}&\multirow{2}{*}{$g(C)$}&\multirow{2}{*}{$g(B)$}&\multirow{2}{*}{$\widetilde{B}^2$}& Moduli & Moduli&\multirow{2}{*}{Other properties}&\\
&&&&dim. of $D$&dim. of $C$& &\\
\hline
\mc{$5$} & \mc{$2$} & \mc{$3$} & \mc{$1$} & \mc{$1$} & \mc{$1$} & $D$ \small{hyperelliptic} & \mc{(D10.1)}\\
&&&&&& $\widetilde{B}$ \small{smooth}  &  \\
\hline
\mc{$4$} & \mc{$2$} & \mc{$2$} & \mc{$1$} & \mc{$1$} & \mc{$1$} & $D$ \small{hyperelliptic}& \mc{(D10.2)}\\
 &&&&&& $\widetilde{B}$ \small{has $1$ node} & \\
\hline
\mc{$6$} & \mc{$2$} & \mc{$3$} & \mc{$1$} & \mc{$2$} & \mc{$1$ or $2$} & $D$ \small{bielliptic} & \mc{(D10.3)}\\ 
&&&&&& $\widetilde{B}$ \small{smooth}  &  \\
\hline
\end{tabular}
}

\vskip 4pt

Furthermore, in all three families the curve  $B$ is hyperelliptic and $p_a(\widetilde{B})=2g(C)-1$.

\item For $\boldsymbol{n=6}$ there are ten topological types of actions on $D$ that define ten irreducible families.

One family with the following characteristics:

\vskip 4pt
{\setlength{\parindent}{-10pt} 
\small
\setlength{\tabcolsep}{4pt}

\setlength{\extrarowheight}{2pt}
\begin{tabular}{|c | c | c | c | c | c | c | c |}
\hline
\mc{$g(D)$} & \mc{$g(C)$} & \mc{$g(B)$} & \mc{$\widetilde{B}^2$} & Moduli & Moduli & \mc{Other properties} \\
&&&& dim. of $D$ & dim. of $C$ & & \\
\hline
$5$ & $2$ & $3$ & $2$ & $2$ & $2$ & $\widetilde{B}$ \small{has $1$ node} & (D6.1)\\
\hline
\end{tabular}
}

\vskip 4pt

Furthermore, in this family the curves $D$ and $B$ are hyperelliptic and $p_a(\widetilde{B})=2g(C)$. 

The other nine families have the following characteristics:

\vskip 4pt
{\setlength{\parindent}{-10pt} 
\small
\setlength{\tabcolsep}{4pt}
\setlength{\extrarowheight}{4pt}
\begin{tabular}{|c | c | c | c | c | c | c | c |}
\hline
\mc{$g(D)$} & \mc{$g(C)$} & \mc{$g(B)$} & \mc{$\widetilde{B}^2$} & Moduli & Moduli & \mc{Other properties} & \\ 
&&&& dim. of $D$ &  dim. of $C$ &  & \\
\hline
$7$ & $3$ & $4$ & $1$ & $2$ & $2$ & $\widetilde{B}$ has $1$ node  & (D6.2)\\[1pt]
\hline
$9$ & $3$ & $5$ & $1$ & $3$ & $3$ & $\widetilde{B}$ smooth  & (D6.3)\\[1pt]
\hline
$5$ & $2$ & $3$ & $2$ & $1$ & $1$ & $\widetilde{B}$ smooth  & (D6.4)\\[1pt]
\hline
$6$ & $3$ & $3$ & $1$ & $2$ & $2$ & $\widetilde{B}$ has $2$ nodes  & (D6.5)\\[1pt]
\hline
$8$ & $3$ & $4$ & $1$ & $3$ & $3$ & $\widetilde{B}$ has $1$ node  & (D6.6)\\[1pt]
\hline
$4$ & $2$ & $2$ & $2$ & $1$ & $1$ &  $\widetilde{B}$ has $1$ node & (D6.7)\\[1pt]
\hline
$6$ & $2$ & $3$ & $2$ & $2$ & $2$ & $\widetilde{B}$ smooth  & (D6.8)\\[1pt]
\hline
$5$ & $2$ & $2$ & $2$ & $2$ & $2$ &  $\widetilde{B}$ has $1$ node & (D6.9)\\[1pt]
\hline
$7$ & $2$ & $3$ & $2$ & $3$ & $2$ &  $\widetilde{B}$ smooth  & (D6.10)\\[1pt]
\hline
\end{tabular}
}
\vskip 4pt

Furthermore, $B$ and $C$ are bielliptic and $p_a(\widetilde{B})=2g(C)-1$.

\item For $\boldsymbol{n=4}$  there are three irreducible families of topological types of actions on $D$ with the following properties:

\vskip 4pt
{\setlength{\parindent}{-9pt} \setlength{\extrarowheight}{7pt}  \small

\begin{tabular}{|c | c | c | c | c | c | c | c |}
\hline
\mc{$g(D)$} & \mc{$g(C)$} & \mc{$g(B)$} & \mc{$\widetilde{B}^2$} & Moduli & \mc{Other properties} & \\ 
&&&& dim. of $D$ & & \\
\hline
\mc{$-1+s+\frac{1}{2}k$} & \mc{$\frac{s+k}{2}$} & \mc{$\frac{2s+k}{4}$} & \mc{$4$} & \mc{$\frac{2s+k-4}{4}$} & 
$\widetilde{B}$ has $\frac{k}{4}$ nodes & \mc{(D4.1)}
\\ 
&&&&&$s+k\geq 8$ &  \\[2pt]
\hline
\mc{$-2+s+\frac{1}{2}k$} & \mc{$\frac{s+k-2}{2}$} & \mc{$\frac{2s+k-4}{4}$} & \mc{$4$}  & \mc{$\frac{2s+k-4}{4}$} & 
$\widetilde{B}$ has $\frac{k}{4}$ nodes & \mc{(D4.2)}\\ 
&&&&& $s+k\geq 10$ & \\[2pt]
\hline
\mc{$-3+s+\frac{1}{2}k$} & \mc{$\frac{s+k-4}{2}$} & \mc{$\frac{2s+k-8}{4}$} & \mc{$4$}  & \mc{$\frac{2s+k-4}{4}$} &
$\widetilde{B}$ has $\frac{k}{4}$ nodes & (D4.3)\\ 
&&&&& $s+k\geq 12$ & \\[2pt]
\hline
\end{tabular}
}

\vskip 4pt
In all three families the curve $C$ is hyperelliptic, with any possible genus, and $p_a(\widetilde{B})=2g(C)$. Furthermore, $h^0(C^{(2)}, \mathcal{O}_{C^{(2)}}(\widetilde{B}))\geq 2$ and $\widetilde{B}$ is linearly equivalent to the sum of two coordinate curves.
\end{enumerate}
\end{theorem}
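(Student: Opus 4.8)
The plan is to divide the classification according to whether $\pi_C^{*}(\widetilde B)$ is reducible, which by Proposition~\ref{redtwo} and Theorem~\ref{summarydihe} determines which of two structurally different situations one is in, and then in each of them to (a) compute $\widetilde B^{2}$ in terms of fixed-point counts of the relevant automorphisms, (b) impose $\widetilde B^{2}>0$ and combine it with Riemann--Hurwitz to cut down to a finite (or finitely parametrised) list of numerical types, and (c) for each surviving type produce the family and compute $g(D),g(C),g(B)$, the number of nodes of $\widetilde B$, $p_a(\widetilde B)$, the moduli dimensions and the remaining geometric properties recorded in the tables. The computational engine is the degree-two cover $\pi_C\colon C\times C\to C^{(2)}$ branched along the diagonal: it gives $\pi_C^{*}A\cdot\pi_C^{*}B=2(A\cdot B)$, while on $C$ the Lefschetz fixed-point formula yields $\Gamma_\varphi\cdot\Gamma_\psi=\nu(\psi^{-1}\varphi)$ when $\psi^{-1}\varphi\ne\mathrm{id}$ and $\Gamma_\varphi^{2}=2-2g(C)$, and adjunction on $C\times C$ and $C^{(2)}$ converts everything into genus and node data.

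For the reducible case Proposition~\ref{redtwo} gives $\widetilde B=\{x+\alpha(x)\}$ with $\alpha\in\mathrm{Aut}(C)$ of order $\ge 3$ and $\pi_C^{*}(\widetilde B)=\Gamma_\alpha+\Gamma_{\alpha^{-1}}$, so
\[
2\widetilde B^{2}=(\Gamma_\alpha+\Gamma_{\alpha^{-1}})^{2}=2\bigl(2-2g(C)\bigr)+2\,\nu(\alpha^{2}),\qquad\text{i.e.}\qquad \widetilde B^{2}=\nu(\alpha^{2})-2g(C)+2 .
\]
The condition $\widetilde B^{2}>0$ reads $\nu(\alpha^{2})\ge 2g(C)-1$; feeding it into the Riemann--Hurwitz relations for $C\to C/\langle\alpha\rangle$ and $C\to C/\langle\alpha^{2}\rangle$, and using that the branching data must come from an actual cyclic action, leaves only finitely many possibilities, whose analysis (together with the node count $\tfrac12(\nu(\alpha^{2})-\nu(\alpha))$ and normalisation $C$ from Proposition~\ref{redtwo}) isolates case~$0$; the finiteness of the moduli there is just the fact that a triple of branch points on $\mathbb P^{1}$ has no moduli.

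For the non-reducible case Theorem~\ref{summarydihe} replaces the pair $(C,\widetilde B)$ by a smooth curve $D$ with a $D_n$-action ($n\ge 3$) generated by involutions $i,j$, with $C=D/\langle j\rangle$, $B=D/\langle i\rangle$ and $\widetilde D=\pi_C^{*}(\widetilde B)$ irreducible with normalisation $D$. Since $\widetilde D$ is invariant under the swap of the two factors and irreducible, both projections $\widetilde D\to C$ have degree $2$, so $K_{C\times C}\cdot\widetilde D=8g(C)-8$; together with the node count $\tfrac12\nu((ij)^{2})$ for $\widetilde D$ and adjunction this gives
\[
2\widetilde B^{2}=\widetilde D^{2}=2g(D)+\nu\bigl((ij)^{2}\bigr)+6-8g(C).
\]
I would then express $g(D),g(C),g(B),g(D/\langle ij\rangle)$ and the relevant $\nu(\cdot)$ as linear functions of the numbers of $D_n$-orbits of each stabiliser type by applying Riemann--Hurwitz to the subcovers $D\to D/\langle j\rangle$, $D\to D/\langle i\rangle$, $D\to D/\langle ij\rangle$; substituting into $\widetilde B^{2}>0$ turns the inequality into a diophantine bound that forces $n\in\{4,6,10\}$ and, for each such $n$, a short list of admissible orbit vectors. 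For every such vector one must (i) confirm realisability by exhibiting a surface-kernel epimorphism from the corresponding Fuchsian group onto $D_n$ with the prescribed local monodromies, (ii) identify the locus of such $D$ in $\mathcal M_{g(D)}$ as an irreducible subvariety and compute its dimension from the signature (and likewise for $C$), and (iii) read off $g(B)$, the node count $\tfrac14(\nu((ij)^{2})-\nu(ij))$ of $\widetilde B$, $p_a(\widetilde B)$, and the (hyper)ellipticity of $D,C,B$ from the subgroup lattice. For the three $n=4$ families the statements $h^{0}(C^{(2)},\mathcal O(\widetilde B))\ge 2$ and $\widetilde B$ linearly equivalent to a sum of two coordinate curves come from Theorem~\ref{charcurves}: for $n=4$ the central involution $(ij)^{2}$ acts on $\widetilde D$, forcing $\widetilde B$ into that linear class and producing the extra section.

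The main obstacle is the combinatorial heart of the non-reducible case. After the Riemann--Hurwitz reduction one is left with a diophantine system for the orbit data, and the delicate points are (1) deciding realisability — not every numerical solution of Riemann--Hurwitz for $D_n$ is the signature of a genuine $D_n$-cover, and two generating vectors can produce the same pair $(C,\widetilde B)$ — (2) proving that the tables are exhaustive and that each listed family is non-empty and irreducible of the stated dimension, and (3) for $n=4$, organising the infinitely many admissible orbit vectors into exactly the three two-parameter series $(\mathrm D4.1)$--$(\mathrm D4.3)$ and pinning down the inequalities $s+k\ge 8,10,12$ that make $g(B)$, the moduli dimension and $g(D)$ non-negative while keeping $\widetilde B^{2}=4>0$.
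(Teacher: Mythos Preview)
Your plan is essentially the paper's: the same reducible/irreducible split via Proposition~\ref{redtwo} and Theorem~\ref{summarydihe}, the same self-intersection formulas (your $\widetilde B^{2}=\nu(\alpha^{2})-2g+2$ and $\widetilde D^{2}=2g(D)+\nu((ij)^{2})+6-8g(C)$ agree with the paper's), and the same Riemann--Hurwitz/diophantine reduction followed by case-by-case realisability and moduli computations.

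Two places where your sketch is looser than the paper and would need tightening. First, the bound does not directly ``force $n\in\{4,6,10\}$'': one must \emph{separately} rule out odd $n$ (in $D_{2l+1}$ all involutions are conjugate, so $t=s$, and $\langle ij\rangle=\langle(ij)^{2}\rangle$ gives $k=0$, contradicting $t<s+k$ which encodes $g<p_a(\widetilde B)$); the key inequality then comes from Riemann--Hurwitz for the \emph{full} quotient $D\to D/D_{2l}$ (not just the intermediate subcovers you list), yielding $g(D/D_{2l})=0$, $t+r\le 6$, and a bound that only separates $l=2$, $l=3$, $l\ge 4$, with the $l\ge 4$ survivors all collapsing to $l=5$ after checking each admissible $(t,r,s,k)$. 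Second, for $n=4$ your appeal to the central involution does not by itself give the linear equivalence; the paper instead uses that $\widetilde B^{2}=4$ together with $\widetilde B\cdot C_P=2$ and the Hodge index theorem force $\widetilde B\equiv_{\mathrm{num}}2C_P$, and then the injectivity of $\mathrm{Pic}^{0}(C^{(2)})\to\mathrm{Pic}^{0}(C_P)$ (valid since $C_P^{2}=1>0$) upgrades this to linear equivalence with a suitable $C_{P_1}+C_{P_2}$.
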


The moduli dimension of $C$ is not computed for $n=4$ because the freedom in the parameters $s$ and $k$ makes the computation not manageable with the techniques at hand.

Since actions of groups on algebraic curves is one of the main tools for the classification of degree two curves in $C^{(2)}$, in particular when the quotient curve is $\mathbb{P}^1$, some of our results can be linked with the study of the moduli dimension of a Nielsen class (see \cite{Fri}). We compute the moduli dimension for the families that appear in Theorem \ref{clasdeg2} for $n\in\{6,10\}$.

The outline of the paper is as follows. In Section \ref{selfintsect} we compute the self-intersection of a curve $\widetilde{B}\subset C^{(2)}$ of degree $d$ as in Theorem \ref{charcurves}. 

In Section \ref{groupact} we introduce a method to construct curves in a symmetric square using the action of a finite group on a curve and Theorem \ref{charcurves}.
In the final part of the section we give some background on the action of groups on curves. 

In Section \ref{dihe} we study curves of degree two in $C^{(2)}$. We consider first those with reducible preimage in $C\times C$ (Theorem \ref{redtwo}) and next those with irreducible preimage (Theorem \ref{summarydihe}).   

In Section \ref{proofthm} we prove Theorem \ref{clasdeg2}. First, we study the possible curves with reducible preimage and positive self-intersection. Next, we consider those with irreducible preimage. We study the numerical conditions determined by our hypothesis on the action of the dihedral group in a curve $D$. We define, when possible, a generating vector of the corresponding dihedral group acting on a curve $D$. For each generating vector we study (when possible) the moduli dimension of the curves $C$ that appear in this way. 

In Section \ref{sec:furthcomm} we study the conditions that a curve in $C^{(2)}$ of degree $d$ must satisfy to have positive self-intersection and arithmetic genus in the Brill-Noether range.  As a corollary of Theorem \ref{clasdeg2} we obtain that there are no such curves with degree $2$. Finally, we give examples of such curves in $C\times C$. 

\textbf{Notation:} We work over the complex numbers. By curve we mean a complex projective reduced algebraic curve. Let $C$ be a smooth curve of genus $g\geq 2$, we put $C^{(2)}$ for its $2$nd symmetric product. We denote by $\pi_C:C \times C \rightarrow C^{(2)}$ the natural map, and $C_P \subset C^{(2)}$ a coordinate curve with base point $P\in C$. We put $\Delta_C$ for the main diagonal in $C^{(2)}$, and $\Delta_{C \times C}$ for the diagonal of the Cartesian product $C\times C$. We denote by $p_a(C) = h^1(C,\mathcal{O}_C)$ the arithmetic genus and when $C$ is smooth by $g(C)= h^0(C, \omega_{C})$ the geometric genus (or topological genus). We will call node an ordinary singularity of order two.

For $\alpha \in \mathrm{Aut}(C)$, we denote by $o(\alpha)$ its order and by $\nu(\alpha)$ the number of points fixed by it. We put $\Gamma_{\alpha}$ for the curve in $C\times C$ given by the graph of $\alpha$, that is, $\Gamma_{\alpha}=\{ (x, \alpha(x)),\ x\in C\}$.

A compact Riemann surface $C$ will be called $\gamma$-hyperelliptic if there is a compact Riemann surface $\widetilde{C}$ of genus $\gamma$ and a holomorphic mapping  $p:C\rightarrow \widetilde{C}$ of degree two.

\section{Self-intersection of $\widetilde{B}$ in $C^{(2)}$}\label{selfintsect}

In this section we compute the self-intersection of a curve $\widetilde{B}\subset C^{(2)}$ of degree $d$ defined by a diagram of curves as in the following theorem.

\begin{theorem}[\cite{MS2}]\label{charcurves}
Let $B$ be an irreducible smooth curve such that there is no non-trivial morphism $B\rightarrow C$. Giving a degree one morphism from $B$ to the surface $C^{(2)}$ with image $\widetilde{B}$ of degree $d$ is equivalent to giving a smooth irreducible curve $D$ and a diagram 
\[
\xymatrix{
D \ar[d]_{(d:1)}\ar[r]^{(2:1)}& B\\
C &
}
\]
which does not reduce.
\end{theorem}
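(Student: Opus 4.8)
The plan is to exploit that the universal degree-two divisor over $C^{(2)}$ is the map $\pi_C\colon C\times C\to C^{(2)}$ itself (the divisor attached to $x+y$ being recovered from the ordered pair $(x,y)$), so that a morphism $f\colon B\to C^{(2)}$ with image $\widetilde B$ has an associated curve $\widetilde D:=\pi_C^{-1}(\widetilde B)\subset C\times C$, invariant under the involution $\sigma$ exchanging the two factors. Assume $f$ is birational onto $\widetilde B$ and $\widetilde B\cdot C_P=d$; as $B$ is smooth, $f$ realizes $B$ as the normalization of $\widetilde B$. I would first show $\widetilde D$ (with reduced structure) is irreducible: since $\pi_C^{-1}(\widetilde B)\to\widetilde B$ is the pull-back of the degree-two cover $\pi_C$, it has degree two, so a reducible $\widetilde D$ would have exactly two components, each mapping to $\widetilde B$ with degree one. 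If such a component $E$ is $\sigma$-invariant, then $\pi_C|_E$ being $\sigma$-invariant of degree one forces $E=\Delta_{C\times C}$, whence $\widetilde B\subset\Delta_C\cong C$ and $B$ maps non-constantly to $C$; if $E$ is not $\sigma$-invariant, it is birational to $\widetilde B$, so $B$ is the normalization of $E$, and the two projections $E\to C$ (non-constant; otherwise $\widetilde B$ is a coordinate curve, hence isomorphic to $C$) descend to non-constant morphisms $B\to C$. Both contradict the hypothesis, so $\widetilde D$ is irreducible, $\pi_C|_{\widetilde D}\colon\widetilde D\to\widetilde B$ has degree two, and $\sigma$ restricts to its deck involution. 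Let $D$ be the normalization of $\widetilde D$; then $D\to\widetilde D\xrightarrow{\pi_C}\widetilde B$ factors through the normalization $B$, giving a degree-two morphism $D\to B$, while $D\to\widetilde D\hookrightarrow C\times C\xrightarrow{\mathrm{pr}_1}C$ has degree $d$, its fiber over a general $P\in C$ being the points $(P,x)\in\widetilde D$, in bijection with the $d$ points of $\widetilde B\cap C_P$. The diagram does not reduce: if $D\to C$ factored through $D\to B$, then, since $\mathrm{pr}_2=\mathrm{pr}_1\circ\sigma$, we would get $\mathrm{pr}_1=\mathrm{pr}_2$ on $\widetilde D$, hence $\widetilde D\subset\Delta_{C\times C}$ and $\widetilde B\subset\Delta_C\cong C$, impossible as before.

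\textbf{From a diagram to a morphism.} Conversely, let $D$ be smooth irreducible with a degree-two morphism $p\colon D\to B$ and a degree-$d$ morphism $\psi\colon D\to C$, the diagram not reducing, and let $\tau$ be the deck involution of $p$ (a degree-two morphism of curves is Galois). The morphism $q:=(\psi,\psi\circ\tau)\colon D\to C\times C$ satisfies $q\circ\tau=\sigma\circ q$, so $\pi_C\circ q$ is $\tau$-invariant and descends to $f\colon B=D/\langle\tau\rangle\to C^{(2)}$; put $\widetilde D:=q(D)$, an irreducible $\sigma$-invariant curve, and $\widetilde B:=\pi_C(\widetilde D)=f(B)$. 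I would interpret ``does not reduce'' as exactly the condition making $f$ birational onto $\widetilde B$, and check that it amounts to (i) $\psi\ne\psi\circ\tau$, equivalently $\widetilde D\not\subset\Delta_{C\times C}$, so that $\pi_C|_{\widetilde D}$ has degree two, and (ii) $q\colon D\to\widetilde D$ is birational, i.e. $D$ is the normalization of $\widetilde D$. Granting these, the factorization $\pi_C\circ q=f\circ p$ gives $\deg q\cdot\deg(\pi_C|_{\widetilde D})=2\deg f$, so $\deg f=1$, and $\widetilde B\cdot C_P=\deg(\mathrm{pr}_1|_{\widetilde D})=\deg\psi=d$ by the same fiber count; the two constructions are mutually inverse essentially by construction, since each recovers $\widetilde D$ as $\pi_C^{-1}(\widetilde B)=q(D)$.

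\textbf{Main obstacle.} The delicate part is the converse, namely pinning down the precise form of ``does not reduce'' that is equivalent to (i)--(ii): what has to be ruled out is that $\psi$ and $\psi\circ\tau$ are linked by an extra symmetry (for instance $\psi\circ\tau=g\circ\psi$ for some involution $g\in\mathrm{Aut}(C)$, or both factoring through a common proper quotient of $D$), which would make $q$ non-birational and hence $f$ not of degree one. On the forward side the only non-formal input is the irreducibility of $\pi_C^{-1}(\widetilde B)$, which is exactly where the hypothesis that $B$ carries no non-constant morphism to $C$ is used; the degree bookkeeping and the mutual-inverse verification are then routine.
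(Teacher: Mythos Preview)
The paper does not prove this theorem: it is quoted from the companion paper \cite{MS2} and used throughout as a black box, so there is no proof in the present paper to compare your argument against.

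On its own merits, your forward construction is the natural one and is essentially correct; the irreducibility of $\pi_C^{-1}(\widetilde B)$ is exactly where the hypothesis ``no non-trivial morphism $B\to C$'' enters, and your case split (component $\sigma$-invariant or not) handles it cleanly. One small remark: once you know a component $E$ maps birationally to $\widetilde B$, the non-constant map $B\to C$ you produce from a projection already contradicts the hypothesis directly, so your ``does not reduce'' check in the forward direction can be shortened to: any factorisation $D\to B\to C$ yields a non-constant $B\to C$, which is excluded.

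You are right that the genuine content lies in the converse, and you have correctly isolated the obstacle: the phrase ``does not reduce'' is a technical condition defined in \cite{MS2}, and the equivalence with your conditions (i)--(ii) is precisely what one must verify there. The present paper only tells you (just after the theorem statement) that for $d$ prime it coincides with ``does not complete'', i.e.\ no curve $H$ fits into a commutative square with $B\to H$ and $C\to H$; that is the form actually used in the rest of the paper (via Proposition~\ref{prop}). Your sketch does not yet establish that ``does not complete'' forces $q$ to be birational, so to finish you would need either the definition from \cite{MS2} or an independent argument ruling out the extra symmetries you mention.
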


When $d$ is prime, that the diagram does not reduce is equivalent to the property that the diagram does not complete. That is, that does not exist a curve $H$ and maps such that we obtain a commutative diagram
\[
\xymatrix{
D \ar[d]_{(d:1)}\ar[r]^{(2:1)}& B \ar@{.>}[d]^{(d:1)}\\
C \ar@{.>}[r]_{(2:1)}& H.
}
\]

\begin{lemma}\label{selfint}
Let $\widetilde{B}\subset C^{(2)}$ be a curve given by a diagram which does not reduce 
\[
\xymatrix{
D \ar[d]_{(d:1)}^{g}\ar[r]^{(2:1)}_{f}& B\\
C &,
} 
\]
and let $\widetilde{D}=\pi_C^*(\widetilde{B})$. Then,
\[
\widetilde{B}^2= p_a(\widetilde{D})-1-d(2g(C)-2).
\]
\end{lemma}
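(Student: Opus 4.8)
The plan is to compute $\widetilde{B}^2$ by pulling everything back to $C \times C$ via $\pi_C$ and using the standard intersection theory on the symmetric square. First I would recall the basic relation between intersection numbers on $C^{(2)}$ and on $C \times C$: if $\alpha, \beta$ are divisor classes on $C^{(2)}$, then $\pi_C^*\alpha \cdot \pi_C^*\beta = 2\,(\alpha \cdot \beta)$, since $\pi_C$ has degree $2$. Applying this with $\alpha = \beta = \widetilde{B}$ gives $\widetilde{D}^2 = 2\,\widetilde{B}^2$ where $\widetilde{D} = \pi_C^*(\widetilde{B})$. So it suffices to compute $\widetilde{D}^2$ on the surface $C \times C$, and then divide by $2$.

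Next I would compute $\widetilde{D}^2$ using the adjunction formula on $C \times C$. The surface $C \times C$ has canonical class $K_{C\times C} = \mathrm{pr}_1^* K_C + \mathrm{pr}_2^* K_C$, and for a curve $\widetilde{D} \subset C\times C$ one has $2 p_a(\widetilde{D}) - 2 = \widetilde{D}^2 + K_{C\times C}\cdot \widetilde{D}$. Here $\widetilde{D}\cdot \mathrm{pr}_i^*(\mathrm{pt})$ records the degree of the projection $\widetilde{D} \to C$ onto the $i$-th factor. The composite $D \xrightarrow{f} B \to C^{(2)}$ has image $\widetilde{B}$ of degree $d$, meaning $\widetilde{B}\cdot C_P = d$; since $\pi_C^{-1}(C_P)$ is (essentially) $\{P\}\times C$ together with $C \times \{P\}$, this translates into each projection $\widetilde{D}\to C$ having degree $d$. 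Hence $K_{C\times C}\cdot \widetilde{D} = 2d(2g(C)-2)$. Plugging into adjunction: $\widetilde{D}^2 = 2p_a(\widetilde{D}) - 2 - 2d(2g(C)-2)$. Dividing by $2$ yields $\widetilde{B}^2 = p_a(\widetilde{D}) - 1 - d(2g(C)-2)$, as claimed.

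The main point requiring care — and the step I expect to be the real obstacle — is justifying the identity $\widetilde{D}\cdot \mathrm{pr}_i^*(\mathrm{pt}) = d$, i.e. that the map $\widetilde{D} \to C$ induced by each coordinate projection has degree exactly $d$. This needs a clean comparison between the coordinate curve $C_P \subset C^{(2)}$ and the fibers $\{P\}\times C$, $C\times\{P\}$ of $\mathrm{pr}_1, \mathrm{pr}_2$ on $C\times C$, and in particular an argument that $\pi_C$ is unramified (or behaves correctly) away from the diagonal so that degrees are preserved under pullback; the behaviour along $\Delta_C$ must be checked not to contribute. One should also confirm that $\widetilde{D}$ is reduced — this is where the hypothesis that the diagram \emph{does not reduce} enters, guaranteeing $\pi_C^*(\widetilde{B})$ is the honest pullback cycle rather than a multiple of a smaller curve — and that $p_a$ is being taken of the correct (possibly singular, possibly reducible) curve $\widetilde{D}$, so that adjunction on $C\times C$ applies in the form stated for a one-dimensional subscheme. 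Once these identifications are in place, the computation is a two-line application of adjunction and the projection formula.
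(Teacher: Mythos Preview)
Your argument is correct, but it follows a different route from the paper's. The paper applies adjunction directly on $C^{(2)}$, using the numerical equivalence $K_{C^{(2)}}\equiv_{\mathrm{num}}(2g(C)-2)C_P-\tfrac{1}{2}\Delta_C$ to obtain
\[
\widetilde{B}^2=2p_a(\widetilde{B})-2-d(2g(C)-2)+\widetilde{B}\cdot\tfrac{\Delta_C}{2},
\]
and then eliminates both $p_a(\widetilde{B})$ and $\widetilde{B}\cdot\Delta_C$ simultaneously via the Riemann--Hurwitz formula for the degree-two map $\widetilde{D}\to\widetilde{B}$ (after observing that both curves are Gorenstein, so that this form of Riemann--Hurwitz is available). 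Your approach instead pulls everything back to $C\times C$, uses $\widetilde{D}^2=2\widetilde{B}^2$, and applies adjunction there; this is arguably cleaner, since $K_{C\times C}$ is simpler and no Riemann--Hurwitz for singular curves is needed. The paper's route has the side benefit of producing the intermediate formula above, which it uses later.

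Your worry about the identity $\widetilde{D}\cdot\mathrm{pr}_i^*(\mathrm{pt})=d$ is easily dispatched by the projection formula: since $\pi_C|_{\{P\}\times C}$ is birational onto $C_P$, one has $\pi_C^*(\widetilde{B})\cdot(\{P\}\times C)=\widetilde{B}\cdot\pi_{C*}(\{P\}\times C)=\widetilde{B}\cdot C_P=d$. One small terminological slip: in the paper ``the diagram does not reduce'' refers to the diagram of Theorem~\ref{charcurves} not factoring through a smaller configuration, not to the scheme-theoretic reducedness of $\widetilde{D}$. The latter holds simply because $\widetilde{B}$ is reduced and not contained in the branch locus $\Delta_C$ of $\pi_C$; and adjunction in the form $2p_a(\widetilde{D})-2=\widetilde{D}^2+K_{C\times C}\cdot\widetilde{D}$ holds for any effective divisor on a smooth surface, reducible or not.
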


\begin{proof}
From the adjunction formula for $\widetilde{B}\subset C^{(2)}$ together with the numerical equivalence $K_{C^{(2)}}\equiv_{num} (2g(C)-2)C_P-\frac{\Delta_{C}}{2}$ we deduce that
\begin{equation}\label{autB}
\widetilde{B}^2=2p_a(\widetilde{B})-2-d(2g(C)-2)+\widetilde{B}\cdot \frac{\Delta_C}{2}.
\end{equation}

Since $\widetilde{D}$ and $\widetilde{B}$ lay in two smooth surfaces, by adjunction, their canonical divisors are locally free, and thus they are Gorenstein curves. Then, by the Riemann-Hurwitz formula for the morphism $\widetilde{D}\rightarrow \widetilde{B}$ induced by $\pi_C$ we get 
\[
2p_a(\widetilde{D})-2=2(2p_a(\widetilde{B})-2)+\widetilde{B}\cdot \Delta_C,
\]
and therefore,
\begin{equation}\label{adjDB}
\widetilde{B}\cdot \frac{\Delta_C}{2}=p_a(\widetilde{D})-2p_a(\widetilde{B})+1.
\end{equation}

From \eqref{autB} and \eqref{adjDB} we get the expression in the Lemma \ref{selfint}.
\end{proof}

\section{Background on group actions}\label{groupact}

We consider now diagrams of curves such that both morphisms are Galois. We give a method to find diagrams that do not complete or decide if a given diagram completes, depending on the order of the group generated by the two automorphisms defining the diagram.

\begin{proposition}\label{prop}
Let $D$ be a projective smooth irreducible curve with the action of a finite group $G$. Let $\alpha, \beta \in G$ with orders $o(\alpha)=d\geq 2$ and $o(\beta)=e\geq 2$. Consider the diagram 
\[
\xymatrix{
D \ar^(.35){(e:1)}[r] \ar[d]^(.35){(d:1)} & D/{\langle\beta\rangle}=B\\
D/{\langle\alpha\rangle}=C &.
}
\]
Then,
\begin{enumerate}[label=(\arabic*)]

\item If the order of $\langle \alpha, \beta\rangle $ equals $e\cdot d$ then the diagram completes. \label{productequal}
\item If the order of $\langle \alpha, \beta\rangle $ is strictly greater than $e\cdot d$ then the diagram does not complete. \label{productsmaller}
\end{enumerate}
\end{proposition}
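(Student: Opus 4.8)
The plan is to interpret the four curves $D$, $B=D/\langle\beta\rangle$, $C=D/\langle\alpha\rangle$ and a hypothetical completing curve $H$ entirely in terms of the quotients of $D$ by subgroups of $G$, and to translate the condition ``the diagram completes'' into a statement about the subgroup $\langle\alpha,\beta\rangle$. Write $N=\langle\alpha,\beta\rangle\le G$. The key observation is that a commutative completion
\[
\xymatrix{
D \ar[d]_{(d:1)}\ar[r]^{(2:1)}& B \ar@{.>}[d]^{(e:1)}\\
C \ar@{.>}[r]_{(d:1)}& H
}
\]
forces $H$ to be dominated by both $B=D/\langle\beta\rangle$ and $C=D/\langle\alpha\rangle$, and the composite $D\to H$ to have degree $ed$; since $D\to H$ factors through $D\to B$ and through $D\to C$, the Galois group of (the Galois closure of) $D\to H$ contains both $\langle\alpha\rangle$ and $\langle\beta\rangle$, hence contains $N$. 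Conversely, the natural candidate for $H$ when it exists is $D/N$.

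For part \ref{productequal}: if $|N|=ed$, I claim $H:=D/N$ completes the diagram. Indeed the inclusions $\langle\alpha\rangle\subset N$ and $\langle\beta\rangle\subset N$ of subgroups give, by passing to quotients, morphisms $C=D/\langle\alpha\rangle\to D/N=H$ and $B=D/\langle\beta\rangle\to D/N=H$, and these fit into a commutative square with the maps out of $D$ because they are all quotient maps by nested subgroups. It remains to check the degrees of the two new arrows are $d$ and $e$ respectively: $\deg(C\to H)=[N:\langle\alpha\rangle]=|N|/d=ed/d=e$ and $\deg(B\to H)=[N:\langle\beta\rangle]=|N|/e=d$, which is exactly the shape required (note the roles of $d$ and $e$ match the diagram, $C\to H$ has degree $e=2$ in the application, $B\to H$ has degree $d$). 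So the diagram completes.

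For part \ref{productsmaller}: suppose for contradiction the diagram completes via some curve $H$ with $D\to C\to H$ and $D\to B\to H$ both equal to a fixed morphism $\varphi:D\to H$ of degree $ed$. Let $L\to D$ be the Galois closure of $\varphi$, with group $\Gamma=\mathrm{Gal}(L/H)$ acting on $L$; then $\mathrm{Gal}(L/C)$ and $\mathrm{Gal}(L/B)$ are subgroups of $\Gamma$, and pulling $\alpha$ and $\beta$ back to automorphisms of $L$ (they lift, since $\langle\alpha\rangle=\mathrm{Gal}(D/C)$ and $D/C$ is an intermediate cover of $L/H$, using that $L/H$ is Galois) shows that the subgroup of $\mathrm{Aut}(L)$ generated by these lifts surjects onto $N$ and is contained in $\Gamma$, whence $|N|\mid|\Gamma|$. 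But $|\Gamma|$ divides... here I must be careful: the cleanest route is to avoid Galois closures and argue directly with function fields. Set $k(D)\supset k(C),k(B)\supset k(H)$ with $[k(D):k(C)]=d$, $[k(D):k(B)]=e$, $[k(C):k(H)]=e$, $[k(B):k(H)]=d$, so $[k(D):k(H)]=ed$; then $k(C)$ and $k(B)$ are both intermediate, $k(D)=k(C)\cdot k(B)$ is their compositum (by the degree count $[k(D):k(H)]=ed=\mathrm{lcm}$-compatible bound forces equality), and $\langle\alpha\rangle=\mathrm{Gal}(k(D)/k(C))$, $\langle\beta\rangle=\mathrm{Gal}(k(D)/k(B))$ are each normal in... no. The correct statement: $\langle\alpha,\beta\rangle$ acts on $k(D)$ fixing $k(C)\cap k(B)\supseteq k(H)$, and the fixed field $k(D)^{\langle\alpha,\beta\rangle}$ has index $|N|$ in $k(D)$; since $k(D)^{\langle\alpha,\beta\rangle}\subseteq k(C)\cap k(B)$ and $[k(D):k(C)\cap k(B)]\le [k(D):k(H)]=ed$, we get $|N|\le ed$. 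Taking the contrapositive: $|N|>ed$ implies no such $H$ exists.

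The main obstacle I anticipate is precisely this last step: justifying cleanly that a completion yields a common subfield of $k(C)$ and $k(B)$ of index at most $ed$ in $k(D)$ — equivalently, ruling out pathologies where $D\to C\to H$ and $D\to B\to H$ might not literally be the same map $D\to H$. The hypothesis that the diagram (as drawn, with the vertical arrow from $D$ the same in both the original and completed square) commutes is what pins this down, and I would spell out that the completion is required to make the \emph{whole} square commute, so the two composites $D\to H$ agree on the nose; then $k(H)\subseteq k(C)\cap k(B)$ and the index bound is immediate from multiplicativity of degrees. Everything else is bookkeeping with nested-subgroup quotients of a curve with group action, which is standard.
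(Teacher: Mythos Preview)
Your argument for part~(1) is exactly the paper's: take $H=D/\langle\alpha,\beta\rangle$ and use nested-subgroup quotients to get the two new arrows with the correct degrees.

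For part~(2) you take a genuinely different route. The paper argues geometrically: if a completing $H$ exists, then both $\alpha$ and $\beta$ permute each fiber of the composite $D\to H$ (since they act as deck transformations of $D\to C$ and $D\to B$ respectively), so the whole group $\langle\alpha,\beta\rangle$ acts on each fiber; the orbit of a general point then has size $|\langle\alpha,\beta\rangle|$ but must sit inside a fiber of cardinality $ed$, contradiction. Your approach via function fields is equally valid and arguably more structural: once $k(H)\subseteq k(C)\cap k(B)$, the group $N=\langle\alpha,\beta\rangle$ fixes $k(H)$ pointwise, so $k(H)\subseteq k(D)^{N}$ and hence $|N|=[k(D):k(D)^{N}]\le[k(D):k(H)]=ed$. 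The paper's version avoids any appeal to Galois theory or Artin's lemma and works directly with points, which makes it a bit more elementary; yours generalizes more transparently to other categories where ``fiber'' is less concrete.

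One slip to fix: you wrote $k(D)^{\langle\alpha,\beta\rangle}\subseteq k(C)\cap k(B)$, but the inclusion goes the other way. Since $N$ fixes $k(C)\cap k(B)$ elementwise, you have $k(C)\cap k(B)\subseteq k(D)^{N}$, and this is the direction you need for the degree bound $|N|\le ed$. With that corrected (or, more simply, arguing directly from $k(H)\subseteq k(D)^{N}$ without the intermediate intersection), your proof goes through. The detour through Galois closures you began and abandoned is indeed unnecessary; you were right to drop it.
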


\begin{proof}
First we prove \ref{productequal}. Assume that $|\langle \alpha, \beta \rangle|=e\cdot d$. Let $F$ be the quotient of $D$ by the action of $\langle \alpha, \beta\rangle $. 
Then, the diagram completes. Indeed, one can define morphisms from $B$ and $C$ to $F$, using that both $B$ and $C$ are quotients of $D$ by subgroups of $\langle \alpha, \beta \rangle$.

Now, we prove \ref{productsmaller}. Assume that $|\langle \alpha, \beta \rangle|>e\cdot d$. By contradiction we assume that the diagram completes. That is, there exists a curve $H$ giving a commutative diagram
\[
\xymatrix{
D \ar[r]^{(e:1)} \ar[d]_{(d:1)} \ar[dr]|{(ed:1)}& B \ar[d]^{(d:1)}\\
C \ar[r]_{(e:1)}& H.
}
\]
The automorphisms $\alpha$ and $\beta$, and hence the group $\langle \alpha, \beta\rangle $, act on the fibers of $D \xrightarrow{(ed:1)} H$. Therefore, the orbit of a general point of $D$ by the action of $\langle \alpha,\beta\rangle $ must be contained in a fiber of $D \xrightarrow{(ed:1)} H$, but the cardinality of the first is strictly greater than the degree of the second, so this inclusion is not possible, and consequently such a curve $H$ does not exist. \qedhere
\end{proof}

\begin{remark}
If the order of $\langle \alpha, \beta\rangle $ is strictly less than $e\cdot d$ the diagram defined by these two automorphisms may or may not complete. For instance, if $\beta=\alpha^k$, then there is a natural morphism from $B$ to $C$,
\[
\xymatrix{
D \ar[r]^(.35){(e:1)} \ar[d]^(.35){(d:1)} & D/{\langle\alpha^k\rangle}=B \ar[dl]^(.4){(d/e:1)} \\
D/{\langle\alpha\rangle}=C &.
}
\]
In that case, the diagram completes if and only if $C$ covers a curve $H$ with degree $e$.
\end{remark}

To study the geometry of the curves defined by such diagrams we need to recall some basic facts about group actions on curves.

Let $C$ be a curve and let $G\subset \mathrm{Aut}(C)$ be a finite subgroup. For $P\in C$, set
\[
G_P=\{g\in G\ |\ g(P)=P\}
\]
the \textbf{stabilizer} of $P$.

\begin{proposition}\label{cyc}(\cite[III.7.7]{Far})
Assume $g(C)\geq 2$. Then, $G_P$ is a cyclic subgroup of $\mathrm{Aut}(C)$.
\end{proposition}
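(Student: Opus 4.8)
The plan is to realise $G_P$ as a finite subgroup of $\mathbb{C}^*$ via its action on the tangent line at $P$, and then invoke the fact that every finite subgroup of $\mathbb{C}^*$ is cyclic. First I would recall that since $g(C)\geq 2$ the group $\mathrm{Aut}(C)$ is finite (Hurwitz), so $G_P$ is finite. Every $g\in G_P$ fixes $P$, hence its differential at $P$ acts on the one-dimensional tangent space $T_PC$, giving a group homomorphism
\[
\rho\colon G_P\longrightarrow \mathrm{GL}(T_PC)\cong \mathbb{C}^*,\qquad g\longmapsto d g_P.
\]
The image of $\rho$ is a finite subgroup of $\mathbb{C}^*$, hence cyclic; so it suffices to prove that $\rho$ is injective.

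The key step is therefore to show that an automorphism $g\in G_P$ with $dg_P=\mathrm{id}$ is the identity of $C$. Pick a local holomorphic coordinate $z$ centred at $P$. Since $g$ has finite order $m=o(g)$ and fixes $P$, one may average the coordinate over the cyclic group $\langle g\rangle$ (Cartan linearisation): the function $w=\frac{1}{m}\sum_{k=0}^{m-1}\zeta^{-k}\,z\circ g^k$, for a suitable choice of primitive $m$-th root of unity $\zeta$, is a local coordinate near $P$ in which $g$ acts as $w\mapsto \zeta w$. If $dg_P=\mathrm{id}$ then $\zeta=1$, so $g$ is the identity on a neighbourhood of $P$; by the identity theorem on the connected Riemann surface $C$ we conclude $g=\mathrm{id}_C$. (Alternatively, and without linearisation: writing $g(z)=z+a_r z^r+\cdots$ with $a_r\neq 0$ the first nontrivial term, an induction shows $g^k(z)=z+k a_r z^r+\cdots$, so $g^m\neq\mathrm{id}$, contradicting $g^m=\mathrm{id}$; hence no such nontrivial $g$ exists.)

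Combining the two steps, $\rho$ is an injective homomorphism from $G_P$ into $\mathbb{C}^*$ with finite image, so $G_P$ is isomorphic to a finite subgroup of $\mathbb{C}^*$ and is therefore cyclic. The only delicate point is the injectivity of $\rho$, i.e. the local rigidity statement that a finite-order automorphism fixing a point with trivial derivative there is trivial; this is exactly where the hypothesis $g(C)\geq 2$ (through the finiteness of $\mathrm{Aut}(C)$, hence of the order of $g$) is used, since on $\mathbb{P}^1$ the map $z\mapsto z+1$ gives a counterexample.
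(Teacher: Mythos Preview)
The paper does not supply its own proof of this proposition; it simply cites \cite[III.7.7]{Far}. Your argument is correct and is in fact the standard proof one finds in that reference: realise $G_P$ inside $\mathbb{C}^*$ via the derivative at $P$, and show injectivity by a local linearisation (or the equivalent power-series computation) using that $g$ has finite order. One small wording issue: in the linearisation step you call $\zeta$ a \emph{primitive} $m$-th root of unity, but what you actually need is $\zeta=dg_P$, which a priori is only an $m$-th root of unity; the conclusion $\zeta=1\Rightarrow g=\mathrm{id}$ then shows a posteriori that $dg_P$ is primitive of order $o(g)$. Your alternative power-series argument avoids this nicety and is the cleaner of the two.
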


Given $\alpha\in {\rm Aut(C)}$, its graph $\Gamma_{\alpha}$ lies in $C\times C$ and is isomorphic to $C$. With a local computation one can see the following.

\begin{proposition}\label{diagtrans}
The diagonal in $C\times C$ cuts the graph of an automorphism transversally.
\end{proposition}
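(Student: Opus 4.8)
The plan is to reduce everything to a local computation at a fixed point of $\alpha$, and then to the single fact that a non-trivial automorphism of finite order cannot act as the identity on the tangent space at a fixed point.

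First I would note that $(x,y)\in \Gamma_\alpha\cap \Delta_{C\times C}$ exactly when $y=x$ and $\alpha(x)=x$; hence the intersection is the finite set $\{(P,P)\ :\ P\in C,\ \alpha(P)=P\}$ (we may assume $\alpha\neq \mathrm{id}_C$, otherwise $\Gamma_\alpha=\Delta_{C\times C}$ and the statement is empty). Fix such a $P$ and a local holomorphic coordinate $t$ on $C$ centered at $P$. Using $t$ on the first factor and a second copy $s$ on the second factor, $(t,s)$ is a local coordinate system on $C\times C$ near $(P,P)$; in it $\Delta_{C\times C}$ is the smooth curve $\{s=t\}$ and $\Gamma_\alpha$ is the smooth curve $\{s=\alpha(t)\}$, where $\alpha(t)=\lambda t+O(t^2)$ with $\lambda=\alpha'(P)\neq 0$ since $\alpha$ is biholomorphic (no constant term since $\alpha(P)=P$). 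The tangent line to $\Delta_{C\times C}$ at the origin is spanned by $(1,1)$ and that to $\Gamma_\alpha$ by $(1,\lambda)$, so the two curves meet transversally at $(P,P)$ precisely when $\lambda\neq 1$.

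It then remains to prove $\lambda=\alpha'(P)\neq 1$. Since $g(C)\geq 2$, the automorphism $\alpha$ has finite order, say $n$, so $\lambda^n=1$. I would invoke the classical linearization argument: the function $u=\frac1n\sum_{k=0}^{n-1}\lambda^{-k}\,\alpha^{k}(t)$ satisfies $u'(P)=1$, hence is again a local coordinate at $P$, and it conjugates $\alpha$ to $u\mapsto \lambda u$. If $\lambda=1$, then $\alpha=\mathrm{id}$ on a neighborhood of $P$, and by the identity theorem on the connected curve $C$ this forces $\alpha=\mathrm{id}_C$, contradicting our assumption. Therefore $\lambda\neq 1$, and the intersection at each point $(P,P)$ is transverse. (Equivalently, this is the faithfulness of the representation of the cyclic stabilizer $\langle\alpha\rangle_P$ on $T_PC$, compatible with Proposition \ref{cyc}.)

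The only real content is this last step — that a non-trivial finite-order automorphism acts by a non-trivial root of unity on the tangent space at each of its fixed points; the rest is a one-line tangent-vector computation. I do not expect any further obstacle, and the argument also yields incidentally that $\Gamma_\alpha\cdot\Delta_{C\times C}=\nu(\alpha)$.
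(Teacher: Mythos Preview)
Your argument is correct and is exactly the local computation the paper alludes to (the paper itself only states ``with a local computation one can see the following'' without writing it out). You have supplied precisely the details the paper omits: the tangent-line comparison and the fact that a nontrivial finite-order automorphism acts by a nontrivial root of unity on the tangent space at a fixed point.
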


\begin{corollary}\label{grtrans}
Let $\alpha$ and $\beta$ be two automorphisms of a curve $C$. If $\alpha^{-1}\beta\neq 1$, then the graphs of $\alpha$ and $\beta$ in $C \times C$ intersect transversally and  $\Gamma_{\alpha}\cdot \Gamma_{\beta}=\nu(\alpha^{-1}\beta)$, the number of fixed points of the automorphism $\alpha^{-1}\beta$.
\end{corollary}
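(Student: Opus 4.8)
The plan is to reduce the general statement to the already-established transversality of the diagonal with a single graph (Proposition \ref{diagtrans}) by means of an automorphism of the product surface. Consider the morphism
\[
\Psi\colon C\times C\longrightarrow C\times C,\qquad \Psi(x,y)=(x,\alpha^{-1}(y)).
\]
Since it is the product $\mathrm{id}_C\times\alpha^{-1}$ of two isomorphisms, $\Psi$ is an automorphism of the surface $C\times C$; in particular it preserves intersection numbers and sends transversal intersections to transversal intersections.

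Next I would compute the images of the two graphs under $\Psi$. One checks directly that
\[
\Psi(\Gamma_{\alpha})=\{(x,\alpha^{-1}\alpha(x))\ |\ x\in C\}=\Delta_{C},\qquad
\Psi(\Gamma_{\beta})=\{(x,\alpha^{-1}\beta(x))\ |\ x\in C\}=\Gamma_{\alpha^{-1}\beta}.
\]
Here $\alpha^{-1}\beta\in\mathrm{Aut}(C)$, and by hypothesis $\alpha^{-1}\beta\neq 1$, so $\Gamma_{\alpha^{-1}\beta}$ is genuinely the graph of a non-identity automorphism and is distinct from $\Delta_{C}$. Applying Proposition \ref{diagtrans} to this automorphism, $\Delta_{C}$ and $\Gamma_{\alpha^{-1}\beta}$ meet transversally; transporting this back through $\Psi^{-1}$ shows that $\Gamma_{\alpha}$ and $\Gamma_{\beta}$ meet transversally as well, and moreover
\[
\Gamma_{\alpha}\cdot\Gamma_{\beta}=\Delta_{C}\cdot\Gamma_{\alpha^{-1}\beta}.
\]

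Finally I would identify the right-hand intersection number. The set-theoretic intersection $\Delta_{C}\cap\Gamma_{\alpha^{-1}\beta}$ consists exactly of the points $(x,x)$ with $x=\alpha^{-1}\beta(x)$, i.e.\ of the fixed points of $\alpha^{-1}\beta$, and there are $\nu(\alpha^{-1}\beta)$ of them. Since the intersection is transversal, every such point contributes multiplicity one, whence $\Delta_{C}\cdot\Gamma_{\alpha^{-1}\beta}=\nu(\alpha^{-1}\beta)$ and the corollary follows. The only point requiring a little care — and the closest thing to an obstacle — is confirming that $\Psi$ really is an automorphism of the surface (so that the intersection pairing and transversality are preserved) and that $\Gamma_{\alpha^{-1}\beta}\neq\Delta_C$ so that Proposition \ref{diagtrans} applies; both are immediate from $\alpha^{-1}\beta\in\mathrm{Aut}(C)\setminus\{1\}$.
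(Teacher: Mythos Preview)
Your proof is correct and follows essentially the same route as the paper: both apply the automorphism $\mathrm{id}_C\times\alpha^{-1}$ of $C\times C$ to carry $\Gamma_\alpha$ to the diagonal and $\Gamma_\beta$ to $\Gamma_{\alpha^{-1}\beta}$, then invoke Proposition~\ref{diagtrans}. The only slip is notational: in the paper's conventions the diagonal of $C\times C$ is $\Delta_{C\times C}$, while $\Delta_C$ denotes the diagonal in $C^{(2)}$.
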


\begin{proof}
We transform the two considered graphs by the action of $1\times \alpha^{-1}$: 
\[
\begin{array}{lcl}
\Gamma_{\alpha}=\{(x, \alpha(x))\} &\xrightarrow{1\times \alpha^{-1}}& \{(x,x) \}=\Delta_{C \times C}\\
\Gamma_{\beta}=\{(x, \beta(x))\}&\xrightarrow{1\times \alpha^{-1}} &\{(x, \alpha^{-1}\beta (x))\}=\Gamma_{\alpha^{-1}\beta}.
\end{array} 
\]

Since the diagonal intersects transversally the graph of any automorphism, we deduce that the two graphs intersect also transversally in $\nu(\alpha^{-1}\beta)$ points. 
\end{proof}

\begin{lemma}\label{orbit}
Let $G$ be a finite group of order $n$ acting on a curve $C$. Given a point $P\in C$, let $\alpha$ be a generator of $G_P$. Then, we have that
\[
n=|G_P|\cdot |\{\textrm{conjugates of }G_P\}|\cdot |\{\textrm{points fixed by }\alpha\textrm{ in } \mathcal{O}_G(P)\}|.
\]
\end{lemma}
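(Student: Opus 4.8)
The statement to prove is Lemma~\ref{orbit}: for a finite group $G$ of order $n$ acting on $C$, a point $P\in C$ with $G_P=\langle\alpha\rangle$ cyclic (by Proposition~\ref{cyc}), one has
\[
n=|G_P|\cdot|\{\text{conjugates of }G_P\}|\cdot|\{\text{points fixed by }\alpha\text{ in }\mathcal{O}_G(P)\}|.
\]
The natural approach is the orbit–stabilizer theorem combined with an analysis of which points of the orbit $\mathcal{O}_G(P)$ are fixed by $\alpha$. First I would write $n=|G_P|\cdot|\mathcal{O}_G(P)|$ by orbit–stabilizer, so it suffices to show that
\[
|\mathcal{O}_G(P)|=|\{\text{conjugates of }G_P\}|\cdot|\{Q\in\mathcal{O}_G(P)\ :\ \alpha(Q)=Q\}|.
\]

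**Key steps.** The main idea is that the map $Q\mapsto G_Q$ sends $\mathcal{O}_G(P)$ onto the set of conjugates of $G_P$ (since $G_{gP}=gG_Pg^{-1}$), and I want to show every fiber of this map has the same cardinality, equal to the number of points of $\mathcal{O}_G(P)$ fixed by $\alpha$. Fix a conjugate $H=gG_Pg^{-1}$; the fiber over $H$ is $\{Q\in\mathcal{O}_G(P)\ :\ G_Q=H\}$. I claim this set equals $\{Q\in\mathcal{O}_G(P)\ :\ H\subseteq G_Q\}$, i.e. the points of the orbit fixed by all of $H$: indeed if $H\subseteq G_Q$ then since $|G_Q|=|G_P|=|H|$ (orbit–stabilizer again, all stabilizers of orbit points being conjugate hence equinumerous) we get $G_Q=H$. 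Moreover, because $H=\langle g\alpha g^{-1}\rangle$ is cyclic generated by the conjugate $g\alpha g^{-1}$, being fixed by all of $H$ is the same as being fixed by the single generator $g\alpha g^{-1}$. So the fiber over $H=gG_Pg^{-1}$ is exactly $\mathrm{Fix}(g\alpha g^{-1})\cap\mathcal{O}_G(P)$. Finally, conjugation by $g$ permutes $\mathcal{O}_G(P)$ (an element $h\in G$, not a geometric automorphism of a subvariety — here I use that $G$ acts on $C$ and $g$ carries the orbit to itself as a set) and carries $\mathrm{Fix}(\alpha)$ to $\mathrm{Fix}(g\alpha g^{-1})$ bijectively, so all these fibers have the same size, namely $|\mathrm{Fix}(\alpha)\cap\mathcal{O}_G(P)|$, the number of points of the orbit fixed by $\alpha$. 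Summing over the conjugates of $G_P$ gives $|\mathcal{O}_G(P)|=|\{\text{conjugates of }G_P\}|\cdot|\{\text{points of }\mathcal{O}_G(P)\text{ fixed by }\alpha\}|$, which combined with $n=|G_P|\cdot|\mathcal{O}_G(P)|$ yields the lemma.

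**Main obstacle.** The routine part is orbit–stabilizer and the conjugacy $G_{gP}=gG_Pg^{-1}$; the step that needs a little care is the claim that $\{Q\in\mathcal{O}_G(P):G_Q=H\}$ coincides with $\mathrm{Fix}(g\alpha g^{-1})\cap\mathcal{O}_G(P)$ for $H=gG_Pg^{-1}$ — specifically the cardinality argument forcing $H\subseteq G_Q\Rightarrow G_Q=H$, which relies on all stabilizers of points in a single orbit having the same order. One subtlety to state clearly is that $g\alpha g^{-1}$ may have fixed points outside $\mathcal{O}_G(P)$, which is exactly why the statement restricts to points fixed "in $\mathcal{O}_G(P)$"; I would emphasize that $\mathrm{Fix}(g\alpha g^{-1})\cap\mathcal{O}_G(P)=g\cdot(\mathrm{Fix}(\alpha)\cap\mathcal{O}_G(P))$ so that the count is independent of the chosen conjugate. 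Everything else is bookkeeping.
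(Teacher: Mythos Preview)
Your proposal is correct and follows essentially the same approach as the paper: both start with the orbit--stabilizer identity $n=|G_P|\cdot|\mathcal{O}_G(P)|$ and then partition $\mathcal{O}_G(P)$ according to which conjugate of $G_P$ is the stabilizer, observing that every conjugate occurs and that all fibers have equal size. Your write-up is in fact more careful than the paper's, which simply asserts that ``there are the same number of points with stabilizer each conjugate of $G_P$''; you justify this via the bijection $g\cdot(\mathrm{Fix}(\alpha)\cap\mathcal{O}_G(P))=\mathrm{Fix}(g\alpha g^{-1})\cap\mathcal{O}_G(P)$ together with the equal-order argument $H\subseteq G_Q\Rightarrow G_Q=H$, which is exactly the point the paper glosses over.
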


\begin{proof}
By Lagrange Theorem (\cite[Theorem 2.27]{Ros}) and the orbit-stabilizer Theorem (\cite[Theorem 5.7]{Ros}) we obtain that
\[
n=|G|=|G_P|\cdot [G:G_P]=|G_P|\cdot |\mathcal{O}_G(P)|.
\]

Since the point $P$ has stabilizer $G_P$, given a conjugate of $G_P$, we see that $\alpha G_P \alpha^{-1}=G_{\alpha(P)}$, the stabilizer of $\alpha(P)$. Moreover, given any element $\beta \in G$, $\beta(P)$ has stabilizer $G_P$ or one of its conjugates. Therefore, in the orbit of $P$ there are the same number of points with stabilizer each conjugate of $G_P$, and all conjugates of $G_P$ are stabilizers of points in the orbit. Hence,
\[
|\mathcal{O}_G(P)|= |\{\textrm{conjugates of }G_P\}|\cdot |\{\textrm{points fixed by }\alpha\textrm{ in } \mathcal{O}_G(P)\}|. \qedhere
\]
\end{proof}

Next, we recall the form of the Riemann-Hurwitz formula for group actions. Let $\mathrm{Br}$ be the branch locus of $f:C\rightarrow C/G$. Then, the Riemann-Hurwitz formula for $f$ reads
\begin{equation}\label{RHG}
2g-2=|G|(2g'-2)+|G|\sum\limits_{P\in \mathrm{Br}}\left(1-\frac{1}{m_P}\right)
\end{equation}
where $g$ and $g'$ are the genus of $C$ and $C/G$ respectively, and $m_P=|G_Q|$ with $f(Q)=P$. Since $f$ is Galois, it is totally ramified, and we call $m_P$ the order of the branch point $P$.

\begin{theorem}[Riemann's Existence Theorem]\label{rexthm}
The group $G$ acts on a curve of genus $g$, with branching type $(g'; m_1, \dots, m_r)$ if and only if the Riemann-Hurwitz formula is satisfied and $G$ has a $(g'; m_1, \dots, m_r)$ generating vector.
\end{theorem}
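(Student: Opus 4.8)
The plan is to prove the equivalence through the standard correspondence between Galois branched coverings of a fixed target and surjections out of the fundamental group of the complement of the branch locus. Fix a compact Riemann surface $Y$ of genus $g'$, points $p_1,\dots,p_r\in Y$, and a base point $y_0\in Y\setminus\{p_1,\dots,p_r\}$, and choose loops so that
\[
\pi_1\big(Y\setminus\{p_1,\dots,p_r\},y_0\big)=\Big\langle \alpha_1,\beta_1,\dots,\alpha_{g'},\beta_{g'},\gamma_1,\dots,\gamma_r \ \Big|\ \prod_{i=1}^{g'}[\alpha_i,\beta_i]\prod_{j=1}^{r}\gamma_j=1\Big\rangle,
\]
with each $\gamma_j$ a small positively oriented loop around $p_j$. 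Recall that a $(g';m_1,\dots,m_r)$ generating vector for $G$ is a tuple $(a_1,b_1,\dots,a_{g'},b_{g'},c_1,\dots,c_r)$ of elements of $G$ which generate $G$, satisfy $o(c_j)=m_j$ for all $j$, and satisfy $\prod_i[a_i,b_i]\prod_j c_j=1$.

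For the forward direction, suppose $G$ acts on a smooth curve $C$ of genus $g$ with quotient $C/G\cong Y$ and branch points $p_1,\dots,p_r$ of orders $m_1,\dots,m_r$. The quotient map $f\colon C\to Y$ restricts over $Y\setminus\{p_1,\dots,p_r\}$ to an unramified connected Galois covering with deck group $G$, so a choice of point in the fibre over $y_0$ gives a surjection $\varphi\colon\pi_1(Y\setminus\{p_1,\dots,p_r\},y_0)\twoheadrightarrow G$. Setting $a_i=\varphi(\alpha_i)$, $b_i=\varphi(\beta_i)$, $c_j=\varphi(\gamma_j)$, surjectivity gives the generation condition, the relation in $\pi_1$ yields $\prod_i[a_i,b_i]\prod_j c_j=1$, and because $f$ is totally ramified over $p_j$ with cyclic stabilizer of order $m_j$ (Proposition \ref{cyc}), the local monodromy $c_j$ around $p_j$ has order exactly $m_j$. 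Thus $(a_i,b_i,c_j)$ is a generating vector of the required type, and the Riemann--Hurwitz formula \eqref{RHG} holds automatically since it holds for every finite morphism of smooth curves.

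For the converse, given a generating vector $(a_i,b_i,c_j)$ of type $(g';m_1,\dots,m_r)$, define $\varphi\colon\pi_1(Y\setminus\{p_1,\dots,p_r\},y_0)\to G$ on generators by $\alpha_i\mapsto a_i$, $\beta_i\mapsto b_i$, $\gamma_j\mapsto c_j$; this is well defined because the defining relation is respected and surjective because the images generate $G$, so $\ker\varphi$ determines a connected unramified Galois covering $C^{\circ}\to Y\setminus\{p_1,\dots,p_r\}$ with deck group $G$. The essential step is then to fill in the punctures: by the analytic form of Riemann's Existence Theorem (equivalently, the Grauert--Remmert extension theorem, or the explicit classification of finite coverings of a punctured disc by cyclic groups), $C^{\circ}$ is the complement of a finite set in a compact Riemann surface $C$, the map extends to a finite branched covering $C\to Y$, the $G$-action extends to $C$, and over each $p_j$ every ramification point has index equal to the order of the local monodromy, namely $o(c_j)=m_j$. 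Hence $G$ acts on $C$ with branching type $(g';m_1,\dots,m_r)$, and the genus of $C$ is the integer $g$ forced by \eqref{RHG}. I expect the compactification step to be the only real obstacle — producing from the abstract topological $G$-covering of the punctured surface an honest compact Riemann surface on which $G$ acts with the prescribed ramification — while the remainder is bookkeeping with the presentation of $\pi_1$; since this is the classical statement, we only indicate the argument and refer to \cite{Far} for the details.
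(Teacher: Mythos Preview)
The paper does not give a proof of this theorem; it is stated as classical background, with only the definition of a generating vector and the remark that the result is non-constructive following it. Your outline via the monodromy surjection $\pi_1(Y\setminus\{p_1,\dots,p_r\})\twoheadrightarrow G$ and the compactification of the resulting unramified cover is the standard argument and is correct, so there is simply nothing in the paper to compare it against.
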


Where a $(g'; m_1, \dots, m_r)$ generating vector (or $G$-Hurwitz vector) is a $2g'+r$-tuple 
\[
(a_1, b_1, \dots, a_{g'},b_{g'}; c_1, \dots, c_r)
\] 
of elements of $G$ generating the group and such that $o(c_i)=m_i$ and 
\[
\prod\limits_{j=1}^{g'} [a_i,b_i]\prod\limits_{i=1}^{r}c_i=1.
\]
We call this last condition the \textbf{product one condition}. 

We remark that Riemann's Existence theorem is not a constructive result. It states the existence of such a curve, but it gives no further information about it. With the following theorem we will be able to compute the number of fixed points of each element $\gamma \in G$ acting on the curve.

\begin{theorem}(\cite{MB})
Let $C$ be a compact Riemann surface and $G$ a group of its automorphisms. Let $(a_1, b_1, \dots, a_{g'},b_{g'}; c_1, \dots, c_r)$ be a $(g';m_1, \dots, m_r)$-generating vector of $G$ describing the action of $G$ on $C$. For $1\neq \gamma \in G$ let $\varepsilon_i(\gamma)$ be $1$ or $0$ according as $\gamma$ is or is not conjugate to a power of $c_i$. 

Then the number $\nu(\gamma)$ of points of $C$ fixed by $\gamma$ is given by the formula
\[
\nu(\gamma)=|N_G(\langle \gamma \rangle)| \sum\limits_{i=1}^r \frac{\varepsilon_i(\gamma)}{m_i},
\]
where $N_G(\langle \gamma \rangle)$ is the normalizer of $\langle \gamma \rangle$ in $G$.
\end{theorem}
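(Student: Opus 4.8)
The plan is to count the fixed points of $\gamma$ fiber by fiber over the branch locus of $f\colon C\to C/G$, and then reduce each fiberwise count to an elementary finite–group computation. Let $P_1,\dots,P_r$ be the branch points, with $m_i=o(c_i)$ the order of $P_i$. First I would note that any $Q\in C$ fixed by $\gamma\neq 1$ has $\gamma\in G_Q\neq\{1\}$, so $Q$ is a ramification point and $f(Q)=P_i$ for some $i$. Because $f$ is Galois, $f^{-1}(P_i)$ is a single $G$‑orbit, and the stabilizers of its points are exactly the conjugates of $\langle c_i\rangle$: this is precisely what the generating vector records, namely the local monodromy around $P_i$, and by Proposition~\ref{cyc} together with the Riemann–Hurwitz convention in \eqref{RHG} each such stabilizer is cyclic of order $m_i$. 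Hence $\gamma$ can fix a point of $f^{-1}(P_i)$ only if $\gamma$ lies in some conjugate of $\langle c_i\rangle$, i.e. $\gamma$ is conjugate to a power of $c_i$, i.e. $\varepsilon_i(\gamma)=1$. So $\nu(\gamma)=\sum_{i=1}^r N_i$, where $N_i$ is the number of points of $f^{-1}(P_i)$ fixed by $\gamma$, and $N_i=0$ whenever $\varepsilon_i(\gamma)=0$.

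The heart of the proof is to show $N_i=|N_G(\langle\gamma\rangle)|/m_i$ whenever $\varepsilon_i(\gamma)=1$. Choose $Q_0\in f^{-1}(P_i)$ with $G_{Q_0}=\langle c_i\rangle$ and identify $f^{-1}(P_i)$ with the left cosets $g\langle c_i\rangle$ via $g\mapsto gQ_0$; then $gQ_0$ is fixed by $\gamma$ iff $g^{-1}\gamma g\in\langle c_i\rangle$. The set $S=\{g\in G:\ g^{-1}\gamma g\in\langle c_i\rangle\}$ is a union of left cosets of $\langle c_i\rangle$ (since $\langle c_i\rangle$ is abelian), so $N_i=|S|/m_i$. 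Writing $S$ as a disjoint union over $\delta\in\langle c_i\rangle$ conjugate to $\gamma$ of the fibers $\{g:\ g^{-1}\gamma g=\delta\}$, and observing that each nonempty such fiber is a coset of the centralizer $C_G(\gamma)$, we get $|S|=|C_G(\gamma)|\cdot\#\{\delta\in\langle c_i\rangle:\ \delta\sim_G\gamma\}$.

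It remains to compute $\#\{\delta\in\langle c_i\rangle:\ \delta\sim_G\gamma\}$. Fix one such $\delta_0=g_0\gamma g_0^{-1}$; then $\langle\delta_0\rangle$ is the unique subgroup of the cyclic group $\langle c_i\rangle$ of order $o(\gamma)$, every $\delta\in\langle c_i\rangle$ conjugate to $\gamma$ has that same order and therefore generates $\langle\delta_0\rangle$, and from $g\delta_0 g^{-1}\in\langle c_i\rangle$ one deduces $g\langle\delta_0\rangle g^{-1}=\langle\delta_0\rangle$, i.e. $g\in N_G(\langle\delta_0\rangle)$. Thus the $\delta$'s in question form the orbit of $\delta_0$ under the conjugation action of $N_G(\langle\delta_0\rangle)$ on $\langle\delta_0\rangle$, whose stabilizer is $C_G(\delta_0)$; by orbit–stabilizer and the invariance of $|N_G(\cdot)|$ and $|C_G(\cdot)|$ under conjugation, this orbit has size $|N_G(\langle\gamma\rangle)|/|C_G(\gamma)|$. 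Combining, $N_i=\frac{|C_G(\gamma)|}{m_i}\cdot\frac{|N_G(\langle\gamma\rangle)|}{|C_G(\gamma)|}=\frac{|N_G(\langle\gamma\rangle)|}{m_i}$, and summing over $i$ gives $\nu(\gamma)=|N_G(\langle\gamma\rangle)|\sum_{i=1}^r\varepsilon_i(\gamma)/m_i$.

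The main obstacle I expect is the first step: justifying cleanly that the stabilizers of the points over $P_i$ are \emph{exactly} the conjugates of $\langle c_i\rangle$, rather than merely cyclic subgroups of order $m_i$. This is the place where one must invoke the correspondence between the $G$‑action on $C$ and its generating vector, realizing the cover through the fundamental group of $C/G$ minus the branch points and the standard loops around the $P_i$, whose monodromy images are the $c_i$. Everything after that is elementary group theory, and the hypothesis $g(C)\ge 2$ enters only via Proposition~\ref{cyc}, to guarantee the stabilizers are cyclic so that ``conjugate to a power of $c_i$'' is the correct condition.
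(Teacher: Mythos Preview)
The paper does not prove this theorem: it is quoted from \cite{MB} (Macbeath) and used as a black box, so there is no in-paper argument to compare against. Your proposal is a correct and complete proof, and in fact it is essentially Macbeath's own argument: count the fixed points of $\gamma$ fiber by fiber over the branch locus, identify each fiber $f^{-1}(P_i)$ with $G/\langle c_i\rangle$, and reduce to the orbit--stabilizer computation you carry out. The group-theoretic core---showing that $\#\{\delta\in\langle c_i\rangle:\delta\sim_G\gamma\}=[N_G(\langle\gamma\rangle):C_G(\gamma)]$ via the uniqueness of subgroups of a given order in a cyclic group---is exactly right.

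Two minor remarks. First, your sentence ``from $g\delta_0 g^{-1}\in\langle c_i\rangle$ one deduces $g\langle\delta_0\rangle g^{-1}=\langle\delta_0\rangle$'' is slightly elliptic: the $g$ here is the element conjugating $\delta_0$ to a given $\delta\in\langle c_i\rangle$, and the deduction uses that $\langle\delta\rangle=\langle\delta_0\rangle$ (same order inside a cyclic group), hence $g\langle\delta_0\rangle g^{-1}=\langle\delta\rangle=\langle\delta_0\rangle$. Second, the hypothesis $g(C)\ge 2$ is not actually needed: point stabilizers of a \emph{finite} group acting holomorphically on any Riemann surface are cyclic (a local computation at the fixed point), so Proposition~\ref{cyc} holds in this generality and Macbeath's formula is valid without restriction on the genus.
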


\section{Degree two curves}\label{dihe}

Now, we study curves of degree two in the symmetric square of a curve. First of all we observe that by the Hodge index theorem, an irreducible curve $\widetilde{B}$ of degree two in $C^{(2)}$ satisfies the inequality $\widetilde{B}^2\leq 4$. Moreover, when $\widetilde{B}^2= 4$ the curve is algebraically equivalent to twice a coordinate curve.

We present a lemma that will be useful in the discussion that follows. The proof uses basic group theory and the particular group structure of the dihedral groups and is left to the reader.

\begin{lemma}\label{cycdihe} Let $i$ and $j$ be two involutions generating a dihedral group $D_n$, $n\geq3$. Then, there is no cyclic subgroup containing $(ij)^2$ and one of the involutions $i$ or $j$.
\end{lemma}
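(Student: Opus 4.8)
The plan is to work purely inside the dihedral group $D_n = \langle i, j \rangle$ with $i,j$ involutions, and to argue by contradiction. Recall the standard presentation: setting $r = ij$, we have $o(r) = n$, the cyclic subgroup $\langle r \rangle$ of order $n$ is normal of index $2$, and every element of $D_n \setminus \langle r \rangle$ is an involution (a ``reflection'') which conjugates $r$ to $r^{-1}$. In particular $i$ and $j$ both lie outside $\langle r \rangle$, while $(ij)^2 = r^2$ lies inside. So I would suppose for contradiction that some cyclic subgroup $\langle \gamma \rangle \le D_n$ contains $r^2$ together with $i$ (the case of $j$ being identical by symmetry).

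The key step is to locate $\gamma$. Since $\langle \gamma \rangle$ contains the reflection $i$, which has order $2$, either $\langle \gamma \rangle = \langle i \rangle$ has order $2$, or $\langle \gamma \rangle$ is a cyclic group of even order $2m$ containing $i$ as its unique involution, namely $i = \gamma^m$. In the first case $\langle \gamma \rangle = \{1, i\}$ cannot contain $r^2$ unless $r^2 = 1$, i.e. $n \mid 2$, contradicting $n \ge 3$. In the second case I use that a cyclic group has at most one subgroup of each order, hence a unique involution; but $\langle \gamma \rangle$ contains $i \notin \langle r \rangle$ and, if $r^2 \ne 1$, it would also have to contain the involution(s) inside $\langle r^2 \rangle \subseteq \langle r \rangle$. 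Concretely: $\langle \gamma \rangle \cap \langle r \rangle$ is a cyclic subgroup of $\langle r \rangle$ containing $r^2$, so it has even order whenever $n$ is even (it contains $r^{n/2}$), or in general it is a nontrivial cyclic group whose unique involution, if any, lies in $\langle r \rangle$ — while $i$ is an involution of $\langle \gamma \rangle$ not in $\langle r \rangle$. Two distinct involutions in a cyclic group is impossible, so the only way out is that $\langle \gamma \rangle \cap \langle r \rangle$ has odd order and contains no involution; but then I note that $\langle i, r^2 \rangle$ already fails to be cyclic for a cleaner reason.

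So the cleanest route, which I would actually write, is: the subgroup $\langle i, r^2 \rangle$ contains the reflection $i$ and the rotation $r^2 \ne 1$ (as $n \ge 3$ forces $r^2 \ne 1$ — using $n \ge 3$ so $o(r) = n \ge 3$, hence $r^2 = 1$ only if $n = 2$). Conjugating $r^2$ by $i$ gives $i r^2 i^{-1} = (i r i^{-1})^2 = (r^{-1})^2 = r^{-2}$, so $\langle i, r^2 \rangle$ contains both $r^2$ and $r^{-2}$ together with the reflection $i$; it therefore contains the rotation subgroup $\langle r^2 \rangle$ and at least one reflection, so it is itself a dihedral (or Klein four) group $\langle r^2, i \rangle$ of order $2 \cdot o(r^2) \ge 4$ with more than one element of order $2$ (namely $i$ and $r^{o(r^2)} \cdot(\text{stuff})$, or simply $i$ and $r^2 i$, which are two distinct reflections). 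A group with two distinct involutions is not cyclic. Since any cyclic subgroup containing $r^2$ and $i$ would contain $\langle i, r^2\rangle$, no such cyclic subgroup exists. The symmetric argument with $j$ in place of $i$ finishes the proof.

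The only genuine subtlety — the ``main obstacle'', such as it is — is the degenerate case analysis forced by small $n$: one must be careful that $\langle i, r^2 \rangle$ really does contain two distinct involutions. For $n$ even with $r^2$ of order $2$ (i.e. $n = 4$) one has $\langle i, r^2 \rangle$ of order $4$; it is the Klein four-group $\{1, i, r^2, r^2 i\}$ precisely because $i$ and $r^2$ commute only if $r^2$ is central, which it is for $n=4$, and then $i, r^2, r^2 i$ are three distinct involutions — still not cyclic. For all other $n \ge 3$ the subgroup $\langle i, r^2\rangle$ is a genuine dihedral group of order $\ge 6$, and $i$ together with $r^2 i$ are visibly distinct reflections. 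In every case two involutions coexist, so cyclicity fails; this case-checking is exactly the ``basic group theory and particular group structure of the dihedral groups'' the statement alludes to, and it is routine.
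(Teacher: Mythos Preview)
Your proof is correct. The clean argument you eventually settle on is the right one: since $n\geq 3$ forces $r^2=(ij)^2\neq 1$, the elements $i$ and $r^2 i$ are two \emph{distinct} involutions in $\langle i, r^2\rangle$ (every element of the coset $\langle r\rangle\, i$ is a reflection), so $\langle i, r^2\rangle$ is not cyclic; but any cyclic subgroup containing both $i$ and $r^2$ would contain $\langle i, r^2\rangle$ as a subgroup, and subgroups of cyclic groups are cyclic --- contradiction. The symmetric argument handles $j$.

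There is nothing to compare against: the paper explicitly leaves this proof to the reader, noting only that it ``uses basic group theory and the particular group structure of the dihedral groups''. Your argument is precisely of that character. One stylistic comment: the first two paragraphs of your write-up pursue an approach via locating $\gamma$ and counting involutions in $\langle\gamma\rangle\cap\langle r\rangle$, which you then abandon mid-stream because it runs into the odd-$n$ case where $\langle r^2\rangle$ has no involution. Since the ``cleaner route'' you give afterward is self-contained and handles all $n\geq 3$ uniformly (including $n=4$, where $\langle i, r^2\rangle$ is the Klein four-group), you could drop the first attempt entirely and present only the two-involution argument.
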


We start by studying irreducible degree two curves $\widetilde{B}\subset C^{(2)}$ such that $\pi_C^*(\widetilde{B})$ is reducible. 

\begin{proof}[Proof of Proposition \ref{redtwo}]

Since there is a degree two morphism from $\pi_C^*(\widetilde{B})$ to $C$, if $\pi_C^*(\widetilde{B})$ reduces, then it consists of two copies of $C$, and the projections onto each factor of $C\times C$ are hence isomorphisms. This gives an automorphism of $C$, $\alpha$, as follows: take one component of $\pi_C^*(\widetilde{B})$, for each point $x$ in this component we define $\alpha(\pi_1(x))=\pi_2(x)$ where $\pi_1$, $\pi_2$ are the projections on the two factors of $C\times C$. Notice that the order of $\alpha$ must be at least $3$, because otherwise $\pi_C^*(\widetilde{B})$ would have only one component. 

Hence, we have $\pi_C^{*}(\widetilde{B})=C_1+C_2\subset C \times C$ with $C_1=\Gamma_{\alpha}$, $C_2=\Gamma_{\alpha^{-1}}$ and $\widetilde{B}=\{x+\alpha(x),\ x\in C\}$. The curve $\widetilde{B}$ has normalization $C$ and moreover the following holds:

First, $\boldsymbol{\widetilde{B}\cdot \Delta_C=2\nu(\alpha)}$. Indeed, consider
\[
\widetilde{B}\cap \Delta_C=\{x+\alpha(x)\ |\ \alpha(x)=x\}.
\]

The preimages of these points by $\pi_C$ correspond to points were $C_1$ and $C_2$ meet (transversally by Corollary \ref{grtrans}) over the diagonal. They intersect the diagonal transversally (by Proposition \ref{diagtrans}), and taking local coordinates we see that $\widetilde{B}$ and $\Delta_C$ are tangent at $x+x$ for $x$ a point fixed by $\alpha$.  

Second, $\boldsymbol{|\mathrm{Sing}\,\widetilde{B}|=\frac{1}{2}(\nu(\alpha^2)-\nu(\alpha))}$. Indeed, a general curve $C_P$ intersects $\widetilde{B}$ in two different points: $P+\alpha(P)$ and $P+\alpha^{-1}(P)$. Since $C_P\cdot \widetilde{B}=2$, when these two points are different they are smooth points on $\widetilde{B}$. To determine the singularities of $\widetilde{B}$ we need to study when these two points coincide. We have two possibilities:

Either $\alpha(P)=P$ and hence $\widetilde{B}$ intersects the diagonal in a smooth (tangent) point as we have just seen.

Or $\alpha(P)=\alpha^{-1}(P)\neq P$, that is, $P$ is fixed by $\alpha^2$ and not by $\alpha$. We observe that if $P$ is fixed by $\alpha^2$, then the point $\alpha(P)$ is also fixed by $\alpha^2$, and both give the same singularity $P+\alpha(P)=\alpha(P)+\alpha^2(P)$.

Finally, all singularities of $\widetilde{B}$ are \textbf{nodes}. Indeed, consider the normalization morphism 
\[
\begin{array}{rcl}
C & \longrightarrow & \widetilde{B}\subset C^{(2)}\\
x & \mapsto & x+\alpha(x).
\end{array}
\]
A singular point $x+\alpha(x)$ with $\alpha^2(x)=x\neq \alpha(x)$ has two preimages by the normalization morphism: $x$ and $\alpha(x)$, and hence $\widetilde{B}$ has two branches at $x+\alpha(x)$. Since $C_P\cdot \widetilde{B}=2$ the singularities have order two. Moreover, since the preimage of $\widetilde{B}$ by $\pi_C$ is formed by the graphs of $\alpha$ and $\alpha^{-1}$, which are transversal by Corollary \ref{grtrans}, and $\pi_C$ is a local isomorphism around these points we conclude that these singularities are nodes.  
\end{proof}

Next, we consider $\widetilde{B}\subset C^{(2)}$, with normalization $B$ and irreducible preimage by $\pi_C$. Let $D$ be the normalization of $\widetilde{D}:=\pi_{C}^*(\widetilde{B})$.

\begin{proof}[Proof of Theorem \ref{summarydihe}]

Regarding Theorem \ref{charcurves}, there exists a diagram which does not complete
\[
\xymatrix{
D \ar[r]^{(2:1)}_{f} \ar[d]_{(2:1)}^{g} & B \\
C &
}
\] 
defined by $\widetilde{B}\subset C^{(2)}$. Since both morphisms are of degree two, in $D$ there are two involutions $i$ and $j$ (the changes of sheet) such that $C=D/\langle j\rangle$ and $B=D/\langle i\rangle$. By Proposition \ref{prop} $i$ and $j$ generate a group of order at least five, that is, a dihedral group (see \cite{Mi1}).

We study now the singularities of $\widetilde{D}$ and $\widetilde{B}$.

Consider the morphism $g\times g:D \times D \rightarrow C \times C$. It is Galois with group $\langle 1\times j,\ j\times 1\rangle$. The preimage of $\widetilde{D}$ by $g\times g$ consists of four divisors:
\begin{equation}\label{preGammadihe}
\begin{array}{l}
D_0=(1\times 1)(D)=\{(x,i(x)),\ x\in D\},\\
D_1=(1\times j)(D)=\{(x,ji(x)),\ x\in D\},\\
D_2=(j\times 1)(D)=\{(j(x),i(x)),\ x\in D\}=\{(x,ij(x)),\ x\in D\}\hspace{5pt}\textrm{ and}\\
D_3=(j\times j)(D)=\{(j(x),ji(x)),\ x\in D\}=\{(x,jij(x)),\ x\in D\}.
\end{array}
\end{equation}

The curve $D_0$ is non-singular and the morphism $g\times g$ restricted to $D_0$ is a local isomorphism. Indeed, it would fail to be a local isomorphism for those $x\in D$ with $x=j(x)$ and $i(x)=ji(x)$, but there are no such points by Lemma \ref{cycdihe}. Therefore, the only singularities of $\widetilde{D}$ are points with more than one preimage on $D_0$.

Two points in $D_0$, $(x,i(x))$ and $(y,i(y))$, have the same image by $g\times g$ if and only if $x=j(y)$, $x=(ij)^2x$ and $y=(ij)^2y$. Therefore, for each two points fixed by $(ij)^2$ there is a singularity in $\widetilde{D}$, and these are all the singularities of $\widetilde{D}$. The points $(x,i(x))$ and $(y,i(y))$ are two intersections of the curves $D_0$ and $D_3$ with the same image by $g\times g$. Since $g\times g$ is not ramified in these points and the divisors $D_0$ and $D_3$ are transversal by Corollary \ref{grtrans}, we deduce that $\widetilde{D}$ is transversal on the image, and therefore its singularities are nodes.

Now, $\widetilde{B}$ is the image of $\widetilde{D}$ by $\pi_C$. Since $\pi_C$ is a double covering ramified at the diagonal, it is a local isomorphism outside the diagonal $\Delta_C$. Therefore, for each two nodes of $\widetilde{D}$ outside the diagonal, there is one node in $\widetilde{B}$. 

A singularity of $\widetilde{D}$ over $\Delta_C$ is the image of two points in $D_0$ corresponding to $x\in D$ such that $x=ij(x)$, that is, $(x,i(x))$ and $(i(x),x)$. Since it lays over the diagonal, there are no other points with the same image $g(x)+g(x)$. Moreover, the points $(x,i(x))$ and $(i(x), x)$ have the same image by $\pi_{D}$, and hence $g(x)+g(x)$ has only one preimage in $B$, which is the normalization of $\widetilde{B}$, and $\widetilde{B}$ has only one branch in $g(x)+g(x)$.
 
Consequently, $(g(x),g(x))$ is a nodal singularity in $\widetilde{D}\subset C\times C$ which image in $\widetilde{B}\subset C^{(2)}$ has a single branch. We want to see that this branch is smooth. 

Let $(z_1,z_2)$ be a system of local coordinates in $C\times C$ with both $z_i$ a local coordinate in $C$ around $g(x)$. Using them, $\pi_C$ is written locally as $(z_1,z_2)\mapsto (z_1+z_2, z_1z_2)=(z,t)$ with $(z,t)$ local coordinates in $C^{(2)}$ centered at $g(x)+g(x)$. Making a local computation and using that in $(g(x),g(x))$ there is a node, we obtain that $g(x)+g(x)$ is a smooth point of $\widetilde{B}$.  

Since the intersection multiplicity in $(g(x),g(x))$ of $\widetilde{D}$ and $\Delta_{C\times C}$ is two, also the intersection multiplicity in $g(x)+g(x)$ of $\widetilde{B}$ and $\Delta_C$ is two, and therefore these two curves are tangent at this point. 
\end{proof}

Therefore, we obtain the following corollary.

\begin{corollary}\label{genBdieh} For $\widetilde{B}$ and $\widetilde{D}$ as above, $p_a(\widetilde{B})-g(B)=\dfrac{1}{4} (\nu((ij)^2)-\nu(ij))$  and $p_a(\widetilde{D})-g(D)=\dfrac{1}{2}\nu((ij)^2)$.
\end{corollary}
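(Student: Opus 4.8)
The plan is to obtain both identities as an immediate consequence of Theorem \ref{summarydihe} combined with the standard comparison between the arithmetic genus of a reduced curve and that of its normalization. Recall that for a reduced curve $X$ with normalization $\mu\colon X^{\nu}\to X$ one has an exact sequence of coherent sheaves on $X$
\[
0\longrightarrow \mathcal{O}_X\longrightarrow \mu_*\mathcal{O}_{X^{\nu}}\longrightarrow \mathcal{T}\longrightarrow 0,
\]
where $\mathcal{T}$ is a skyscraper sheaf supported on $\mathrm{Sing}\,X$ with $\dim_{\mathbb{C}}H^0(X,\mathcal{T})=\sum_{p\in \mathrm{Sing}\,X}\delta_p$; taking Euler characteristics gives $p_a(X)=p_a(X^{\nu})+\sum_p\delta_p$, and $\delta_p=1$ at an ordinary double point.

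First I would apply this with $X=\widetilde{D}$. By Theorem \ref{summarydihe}, $\widetilde{D}$ has only nodes, exactly $\tfrac12\nu((ij)^2)$ of them, and its normalization is the smooth curve $D$, so $p_a(D)=g(D)$; hence $p_a(\widetilde{D})-g(D)=\tfrac12\nu((ij)^2)$. Then I would repeat the argument verbatim with $X=\widetilde{B}$: again by Theorem \ref{summarydihe}, $\widetilde{B}$ has only nodes, exactly $\tfrac14(\nu((ij)^2)-\nu(ij))$ of them, and its normalization is the smooth curve $B$, so $p_a(B)=g(B)$ and therefore $p_a(\widetilde{B})-g(B)=\tfrac14(\nu((ij)^2)-\nu(ij))$. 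This is the whole argument.

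There is essentially no obstacle here. The only point that deserves a comment is that the sheaf sequence above presupposes $\widetilde{B}$ and $\widetilde{D}$ to be reduced Gorenstein curves, which holds because they are effective divisors in the smooth surfaces $C^{(2)}$ and $C\times C$ (the same remark was already used in the proof of Lemma \ref{selfint}); but since both curves are nodal, the relation $p_a(X)-p_a(X^{\nu})=\#\{\text{nodes of }X\}$ is in any case elementary and can be verified in local coordinates, so the corollary is really just a restatement of the singularity counts of Theorem \ref{summarydihe} once one recalls that $D$ and $B$ are smooth. Alternatively, one could feed those nodal counts into the Riemann--Hurwitz identity for the degree two morphism $\widetilde{D}\to\widetilde{B}$ that appears in the proof of Lemma \ref{selfint}, but the normalization sequence gives the statement with no further bookkeeping.
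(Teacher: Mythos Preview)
Your argument is correct and is exactly what the paper intends: the corollary is stated without proof as an immediate consequence of Theorem~\ref{summarydihe}, and your use of the normalization sequence (equivalently, the elementary fact that each node contributes $\delta_p=1$ to $p_a$) makes explicit the one-line step the paper leaves implicit.
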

  
\section{Proof of Theorem \ref{clasdeg2}}\label{proofthm}

Next, we study degree two curves with $\widetilde{B}^2>0$ in order to prove Theorem \ref{clasdeg2}. We divide the proof in two parts depending on whether $\pi_C^*(\widetilde{B})$ is reducible or irreducible.

\subsection{Reducible case}

We begin the reducible case with an example.

\begin{example}\label{exred} 
We consider the $(0; 10,5,2)$-generating vector of $\mathbb{Z}/10$ given by $(\alpha, \alpha^4, \alpha^5)$ where $\alpha$ denotes a generator of the group. It defines a morphism $C\rightarrow \mathbb{P}^1$, where $C$ is a curve with $\alpha \in {\rm Aut}(C)$ such that $\nu(\alpha)=1$, $\nu(\alpha^2)=3$ and $\nu(\alpha^5)=6$ (see Lemma \ref{orbit}). By the Riemann-Hurwitz formula we obtain that 
\[
2g(C)-2=10(-2)+ 1 \cdot 9+2\cdot 4+5\cdot 1 \Rightarrow g(C)=2.
\]

Consider now the graph of $\alpha$, $\Gamma_{\alpha}$, in $C\times C$. The image of $\Gamma_{\alpha}$ by $\pi_C$ is a curve $\widetilde{B}$ of degree two in $C^{(2)}$, such that $\pi_C^*(\widetilde{B})=\Gamma_{\alpha}+\Gamma_{\alpha^{-1}}$. 
\end{example} 

In the following lemma we prove that it is the only instance of a curve of degree two in a symmetric square $C^{(2)}$ with reducible preimage by $\pi_C$ and positive self-intersection.  

\begin{lemma}\label{redcase}
Let $(C, \widetilde{B})$ be a pair of curves with $C$ smooth and $\widetilde{B}\subset C^{(2)}$ of degree two, such that $\pi_C^*(\widetilde{B})$ is reducible and $\widetilde{B}^2>0$. Then, $C$ is a curve of genus $2$ with an automorphism $\alpha\in {\rm Aut}(C)$ such that $o(\alpha)=10$, $\nu(\alpha)=1$, $\nu(\alpha^2)=3$, $\nu(\alpha^5)=6$, and $\widetilde{B}$ is the symmetrization of the graph of $\alpha$.
\end{lemma}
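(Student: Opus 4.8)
The plan is to reduce the statement to a finite combinatorial problem about one automorphism of $C$. Since $\pi_C^*(\widetilde B)$ is reducible, Proposition~\ref{redtwo} produces an $\alpha\in\operatorname{Aut}(C)$ with $o(\alpha)=n\ge 3$ such that $\widetilde B=\{x+\alpha(x)\mid x\in C\}$, with normalization $C$, exactly $\tfrac12\bigl(\nu(\alpha^2)-\nu(\alpha)\bigr)$ nodes, and preimage $\widetilde D:=\pi_C^*(\widetilde B)=\Gamma_\alpha+\Gamma_{\alpha^{-1}}$. So it suffices to show that $\widetilde B^2>0$ forces $g(C)=2$, $n=10$ and the prescribed values of $\nu(\alpha)$, $\nu(\alpha^2)$, $\nu(\alpha^5)$; the converse direction is supplied by Example~\ref{exred}.

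First I would compute $\widetilde B^2$ in terms of $\alpha$. The curve $\widetilde D=\Gamma_\alpha\cup\Gamma_{\alpha^{-1}}$ is the union of two smooth copies of $C$ meeting transversally in $\Gamma_\alpha\cdot\Gamma_{\alpha^{-1}}=\nu(\alpha^{-2})=\nu(\alpha^2)$ points by Corollary~\ref{grtrans}, so $p_a(\widetilde D)=2g(C)-1+\nu(\alpha^2)$; feeding this into Lemma~\ref{selfint} with $d=2$ yields
\[
\widetilde B^2=\nu(\alpha^2)-2g(C)+2 .
\]
(Equivalently, substitute $p_a(\widetilde B)=g(C)+\tfrac12(\nu(\alpha^2)-\nu(\alpha))$ and $\widetilde B\cdot\Delta_C=2\nu(\alpha)$, both established in the proof of Proposition~\ref{redtwo}, into \eqref{autB}.) Hence $\widetilde B^2>0$ is equivalent to $\nu(\alpha^2)\ge 2g(C)-1$, while the Hodge-index observation opening Section~\ref{dihe} gives the complementary bound $\nu(\alpha^2)\le 2g(C)+2$.

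The heart of the proof is then a finiteness step followed by an explicit enumeration. Applying the Riemann--Hurwitz formula \eqref{RHG} to $C\to C/\langle\alpha^2\rangle$ — and using that, $\langle\alpha^2\rangle$ being cyclic, the fixed points of $\alpha^2$ are precisely its totally ramified points — one obtains an inequality of the form $\nu(\alpha^2)\,(m-1)\le 2g(C)-2+2m$ with $m=o(\alpha^2)$; combined with $\nu(\alpha^2)\ge 2g(C)-1$ this leaves only finitely many pairs $(g(C),m)$, and hence finitely many admissible orders $n=o(\alpha)$. For each of these I would determine the branching types $(g';m_1,\dots,m_r)$ of $C\to C/\langle\alpha\rangle$ allowed by \eqref{RHG}, keep only those for which the cyclic group $\langle\alpha\rangle$ actually admits a generating vector (the product-one and generation conditions of Theorem~\ref{rexthm}, which for a cyclic group reduce to simple congruences), and then compute $\nu(\alpha)$, $\nu(\alpha^2)$, $\nu(\alpha^5)$ via Macbeath's formula; testing $2g(C)-1\le\nu(\alpha^2)\le 2g(C)+2$ against these data should eliminate every case except $g(C)=2$ with branching $(0;10,5,2)$, i.e. the generating vector $(\alpha,\alpha^4,\alpha^5)$ of Example~\ref{exred}, for which $o(\alpha)=10$, $\nu(\alpha)=1$, $\nu(\alpha^2)=3$, $\nu(\alpha^5)=6$ and $\widetilde B^2=1$. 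The main obstacle is exactly this elimination: the number of admissible branching types for the small genera involved is not negligible, the Macbeath computation has to be redone for each candidate generating vector (and $\alpha$ is only pinned down up to $\alpha\leftrightarrow\alpha^{-1}$), so the careful bookkeeping of the low-order cases is where the real work lies.
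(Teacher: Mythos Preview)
Your outline follows the paper's argument closely: invoke Proposition~\ref{redtwo} to obtain $\alpha$, compute $\widetilde B^2=\nu(\alpha^2)-2g(C)+2$ (the paper writes the same quantity as $-2g+2+r+s$ with $s=\nu(\alpha)$ and $r=\nu(\alpha^2)-\nu(\alpha)$), bound $\nu(\alpha^2)$ against $o(\alpha^2)$ via Riemann--Hurwitz (the paper quotes this as the inequality $\nu(\alpha^2)\le 2+\tfrac{2g}{o(\alpha^2)-1}$ from \cite[V.1.5]{Far}, which is exactly your estimate), and then enumerate the survivors.

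There is one concrete gap in your finiteness step. Combining $\nu(\alpha^2)\ge 2g-1$ with $\nu(\alpha^2)(m-1)\le 2g-2+2m$ yields $2g(m-2)\le 3(m-1)$. This does bound $g$ for each fixed $m\ge 3$ and forces $m\le 5$, but for $m=o(\alpha^2)=2$, i.e.\ $o(\alpha)=4$, the inequality is vacuous and $g$ is \emph{not} bounded; so the pairs $(g,m)$ are not finite as you claim. The paper does not try to extract finiteness from this bound alone: it treats the small orders $o(\alpha)=4$ and $o(\alpha)=6$ by a direct Riemann--Hurwitz analysis of $C\to C/\langle\alpha\rangle$ together with the parity constraints on $s$ and $r$, and only afterwards applies the bound, which for $o(\alpha)\ge 8$ (hence $m\ge 4$) immediately gives $g=2$. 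Your plan needs the same ad~hoc treatment of $o(\alpha)=4$ before the general inequality becomes effective.

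A secondary organizational difference: the paper first argues that $\widetilde B$ must be singular, hence $\nu(\alpha^2)>\nu(\alpha)$, which forces $o(\alpha)$ to be even and halves the subsequent case list. Your enumeration would instead have to carry the odd orders $n\in\{3,5\}$ through to the Macbeath check; that is harmless in principle but lengthens the bookkeeping you already flag as the main obstacle.
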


\begin{proof}

From Proposition \ref{redtwo} we have that $\widetilde{B}=\pi_C(\Gamma_{\alpha})$ for $\alpha\in {\rm Aut}(C)$, $C$ is the normalization of $\widetilde{B}$, and $p_a(\widetilde{B})=g(C)+\frac{1}{2} (\nu(\alpha ^2)-\nu(\alpha))$.  Since necessarily $p_a(\widetilde{B})>g(C)$ we obtain that $\widetilde{B}$ is singular, so $o(\alpha)$ is even (different from $2$) and $\nu(\alpha ^2)-\nu(\alpha)$ is an even number. Moreover, by \cite[V.1.5]{Far} we know that 
\begin{equation} \label{pford} 
\nu(\alpha^2)\leq 2+\frac{2g}{o(\alpha^2)-1}.
\end{equation}

We call $g=g(C)$, $s=\nu(\alpha)$ and $r=\nu(\alpha ^2)-\nu(\alpha)$. From the adjunction formula we deduce that  
\begin{align}
\widetilde{B}^2=2p_a(\widetilde{B})-2-2(2g-2)+\widetilde{B}\cdot \frac{\Delta_C}{2}&=-2g+2+r+s>0 \Leftrightarrow \nonumber \\
 2g-2<r+s&=\nu(\alpha^2).\label{psi}
\end{align}

Next, we consider the different possibilities for $o(\alpha)$ and hence $o(\alpha^2)$:

If $o(\alpha)=6$, then by \eqref{pford} and \eqref{psi} we deduce that $g=3$ and $r+s=5$ with $r$ even. With this conditions, the Riemann-Hurwitz formula \eqref{RHG} is not satisfied for $C\rightarrow C/\langle \alpha \rangle$.

If $o(\alpha)=4$, then by \eqref{psi} we deduce $r+s\geq 2g-1$. By \eqref{pford} and \eqref{RHG}, we deduce $g(C/\langle \alpha \rangle)=0$ and $s=3$, that is not compatible with $r$ even. Hence, such an action does not exist.

If  $o(\alpha)\geq 8$ (and $o(\alpha^2)\geq 4$), then from \eqref{pford} and \eqref{psi} we deduce that
\begin{equation}\label{inord}
2+\frac{2g}{3}\geq 2+\frac{2g}{o(\alpha^2)-1}\geq r+s > 2g-2.
\end{equation}
This implies that $3>g$, so it remains to consider $g=2$ with $r+s=3$. From \eqref{inord} we deduce that $5 \geq o(\alpha^2)\geq 4$. Since $r$ is even and different from zero, we have that $r=2$ and $s=1$. There could be other ramification points coming from points fixed by $\alpha^{o(\alpha^2)}$ that would come by groups of $o(\alpha^2)$ (see Lemma \ref{orbit}). Considering the Riemann-Hurwitz formula $C\rightarrow C/\langle \alpha \rangle$ we obtain that the only compatible case is $o(\alpha^2)=5$ and $C/\langle \alpha \rangle = \mathbb{P}^1$ with $\nu(\alpha^5)=5+1=6$, that is, the case described in Example \ref{exred}.   
\end{proof}

The Hurwitz space of such morphisms $C\rightarrow \mathbb{P}^1=C/\langle \alpha \rangle$ is $0$-dimensional. Indeed, there are only three branch points, that by the action of the automorphisms of $\mathbb{P}^1$, can be fixed as $0,1,\infty$. Therefore, in moduli, we have a finite number of such curves $C$ (see \cite{Serre}). We have proven point $0.$ of Theorem \ref{clasdeg2}.

\subsection{Irreducible case}

Let $(C, \widetilde{B})$ be a pair of curves with $C$ smooth and $\widetilde{B}\subset C^{(2)}$ of degree two with $\widetilde{D}:=\pi_C^*(\widetilde{B})$ irreducible. Let $B$ be the normalization of $\widetilde{B}$ and $D$ the normalization of $\widetilde{D}$. By the arguments in Section \ref{dihe} the curves $B$, $C$ and $D$ lay in a diagram 
\begin{equation}\label{diagdihe}
\xymatrix{
D \ar[r]^{(2:1)} \ar[d]^{(2:1)}& B \\
C &
}
\end{equation}
that does not complete. There exist two involutions $i$ and $j$ (the changes of sheet) such that $B=D/\langle i\rangle $ and $C=D/\langle j\rangle $ with $\langle i,j \rangle =D_n$.

We use the following notation for the number of fixed points of some of the automorphisms of the curve $D$: 
\[
s=\nu(j),\hspace{15pt} t=\nu(i),\hspace{15pt} r=\nu(ij) \hspace{10pt}\textrm{and}\hspace{10pt} r+k=\nu((ij)^2). 
\] 
Let $b=g(B)$, $g=g(C)$ and $h=g(D)$. 

The strategy is to find the restrictions on the numbers $s,t,r,k, b, g$ and $h$ given by our hypothesis.

First, by the Riemann-Hurwitz formula for the morphism $D \rightarrow C$ we obtain
\begin{equation}\label{2a}
g=\frac{2h+2-s}{4}.
\end{equation}

Second, by the Riemann-Hurwitz formula for the morphism $D \rightarrow B$ and Corollary \ref{genBdieh} we deduce that
\begin{equation}\label{2c}
p_a(\widetilde{B})=b+\frac{1}{4}(r+k-r)=\frac{1}{4}(2h+2-t+k).
\end{equation}

Third, by \eqref{2a} and \eqref{2c} the condition $g<p_a(\widetilde{B})$ translates into
\begin{equation}\label{2d}
\frac{2h+2-s}{4}<\frac{2h+2-t}{4}+\frac{1}{4}k \Leftrightarrow t<s+k.
\end{equation}

Finally, from Lemma \ref{selfint} and Corollary \ref{genBdieh} together with \eqref{2a} we deduce that 
\begin{equation}\label{2b}
\widetilde{B}^2=-h+1+s+\frac{1}{2}(r+k)>0  \Leftrightarrow h\leq s+ \frac{1}{2}(r+k).
\end{equation}

\vspace{5pt}
\textbf{Note:} \emph{We can assume that $\langle i,j\rangle =D_{2l}$}. Indeed, if $\langle i,j\rangle =D_{2l+1}$, then the involutions $i$ and $j$ would be conjugate, and so $t=s$. Since $2$ and $2l+1$ are coprime, the automorphisms $ij$ and $(ij)^2$ would have the same fixed points and thus $k=0$, contradicting \eqref{2d}. 
\vspace{5pt}

Let $\gamma=g(D/D_{2l})$. By the Riemann-Hurwitz formula for group quotients (see \eqref{RHG}) applied to $\tau:D \rightarrow D/D_{2l}$, we have that
\begin{equation}\label{RH}
h-1=2l(2\gamma-2)+2l\sum_{P\in \mathrm{Br}}\left(1-\frac{1}{m_P}\right)
\end{equation}
where $m_P=|G_Q|$ with $\tau(Q)=P$.

We count the number of branch points of $\tau$ using Lemma \ref{orbit} repetitively. Since $i$ and $j$ are non conjugate and have order two, there are $\frac{t}{2}$ branch points of order $2$ corresponding to the conjugacy class of the stabilizer $\langle i \rangle$. There are $\frac{s}{2}$ branch points of order $2$ corresponding to the conjugacy class of $\langle j \rangle $, with no point common to the previous set. Moreover, there are $\frac{r}{2}$ branch points of order $2l$ corresponding to $\langle ij \rangle$, and $\frac{k}{4}$ branch points of order $l$ corresponding to $\langle (ij)^2\rangle$. We could also have other branch points coming from powers of $ij$ that do not generate the whole $\langle ij \rangle$. Note that $s$, $t$ and $r$ are even and $k$ is multiple of $4$.

All together, this gives that
\begin{align*}
2l\sum\limits_{P\in Br} \left(1-\frac{1}{m_P}\right)&\geq 2l(\frac{s+t}{2}\cdot \frac{1}{2}+\frac{r}{2}(1-\frac{1}{2l})+\frac{k}{4}(1-\frac{1}{l}))\\
&=\frac{l}{2}(s+t)+r\frac{2l-1}{2}+k\frac{l-1}{2}.
\end{align*}

Then, by \eqref{RH} we have that
\begin{equation} \label{2}
h-1\geq 2l(2\gamma-2)+(s+t)\frac{l}{2}+r\frac{2l-1}{2}+k\frac{l-1}{2}.
\end{equation}
 
By \eqref{2b} and \eqref{2}, and using that $l\geq 2$ we deduce that
\begin{equation} \label{eq:claim}
4(2\gamma-2)+(s+t)+r\frac{3}{2}+k\frac{1}{2} \leq s+ \frac{1}{2}(r+k)-1.
\end{equation}
Hence, $8(\gamma-1)+t+r\leq -1$ and therefore $\gamma=0$ and $t+r\leq 7$. Since $t$ and $r$ are even, we conclude that $t+r\leq 6$. Thus, from \eqref{eq:claim} with $\gamma=0$ we get
\begin{equation}\label{condpg}
l(t+r-8)+(l-2)(s+r+k)< 0.
\end{equation}
For $l\geq 4$ the inequality \eqref{condpg} implies 
\begin{equation}\label{condp}
\frac{s+r+k}{8-(t+r)}<\frac{l}{l-2}\leq 2.
\end{equation}
These inequalities will allow us to reduce the possibilities for $(s,r,t,k)$ to a finite list for $l\geq 3$. We are going to study separately the cases $l\geq 4$, $l=3$ and $l=2$.

We summarize the results from \cite{Ca} on dihedral covers of $\mathbb{P}^1$ that we use in the following theorem.
\begin{theorem}\label{CataneseThm}
Dihedral covers of $\mathbb{P}^1$ of a fixed numerical type $(0; m_1, \dots, m_n)$ form an irreducible closed subvariety of dimension $n-3$ whose image on the moduli space $\mathcal{M}_g$, for $g$ as in \eqref{RHG}, is an irreducible closed subset of the same dimension.
\end{theorem}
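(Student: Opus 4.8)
The plan is to describe these covers through Hurwitz theory and then reduce the assertion to a combinatorial transitivity statement. Fix a dihedral group $G$ and a numerical type $(0;m_1,\dots,m_n)$ compatible with \eqref{RHG}. By Riemann's Existence Theorem, a $G$-Galois cover $D\to\mathbb{P}^1$ branched over ordered points $p_1,\dots,p_n$ with local monodromy of order $m_i$ at $p_i$ is the same datum as a generating vector $(c_1,\dots,c_n)$ of $G$ with $o(c_i)=m_i$ satisfying the product one condition, taken up to simultaneous conjugation. First I would form the Hurwitz space $\mathcal{H}$ of such pairs (ordered branch set $\{p_1,\dots,p_n\}$, generating vector $(c_i)$): forgetting the generating vector exhibits $\mathcal{H}$ as a finite unramified covering of the ordered configuration space $\mathrm{Conf}_n(\mathbb{P}^1)$, which is smooth, connected, of dimension $n$. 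Hence $\mathcal{H}$ is a smooth quasi-projective variety of dimension $n$, and its connected components correspond bijectively to the orbits of the braid-monodromy action (Hurwitz moves, together with the moves permuting branch points of equal order) on the set of generating vectors of the fixed numerical type.

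The substantive input is that this action is transitive; this is the content of \cite{Ca}. It is proved by reducing a generating vector to a normal form adapted to the structure $G=\langle r\rangle\rtimes\langle s\rangle$ of the dihedral group: one uses the elementary moves $(\dots,c_i,c_{i+1},\dots)\mapsto(\dots,c_ic_{i+1}c_i^{-1},c_i,\dots)$ to collect the entries that are rotations, to standardize the rotation part, and to show that the residual ambiguity among the reflection entries — including, when the rotation subgroup has even order, the choice of reflection conjugacy class — is absorbed by simultaneous conjugation once the product one condition is imposed. Granting this, $\mathcal{H}$ is irreducible, of dimension $n$. Quotienting by the free, proper action of $\mathrm{PGL}_2(\mathbb{C})$ on $\mathrm{Conf}_n(\mathbb{P}^1)$ (valid for $n\geq 3$), which commutes with the covering structure, yields an irreducible quasi-projective variety $\mathcal{U}$ of dimension $n-3$; up to a finite quotient (permutation of the branch points and conjugation of the generating vector) $\mathcal{U}$ is the moduli space of dihedral covers of the given numerical type, and it sits as a closed subvariety of the moduli space of all $G$-covers of $\mathbb{P}^1$.

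It remains to push forward to $\mathcal{M}_g$. Sending a cover to the isomorphism class of its total space $D$ defines a morphism $\Phi\colon\mathcal{U}\to\mathcal{M}_g$, with $g=g(D)$ the value fixed by \eqref{RHG}. Since $g\geq 2$, the group $\mathrm{Aut}(D)$ is finite, so $D$ contains only finitely many subgroups isomorphic to $G$ acting with the prescribed branching; whence $\Phi$ has finite fibres and $\dim\overline{\Phi(\mathcal{U})}=n-3$. The image is irreducible, being the continuous image of an irreducible variety; and it is closed because the moduli map extends to a proper morphism from a compactification — for instance the space of Harris–Mumford admissible $G$-covers, equivalently the relevant component of the space of twisted stable maps to $BG$, which maps properly to $\overline{\mathcal{M}}_g$ — so the image of our (closed) numerical stratum is closed in $\mathcal{M}_g$. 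The hard part is entirely the braid transitivity of the second paragraph; everything else is formal bookkeeping with Hurwitz spaces, and it is precisely to avoid reproving that combinatorial fact that we simply quote \cite{Ca}.
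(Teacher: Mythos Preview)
The paper does not prove this theorem at all: it is introduced with the sentence ``We summarize the results from \cite{Ca} on dihedral covers of $\mathbb{P}^1$ that we use in the following theorem'' and is then used as a black box in the classification. There is therefore no proof in the paper to compare your proposal against.

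That said, your outline is a faithful sketch of the argument behind the cited result. The skeleton --- build the Hurwitz space as a finite \'etale cover of the configuration space, identify connected components with braid orbits on generating vectors, quote the dihedral transitivity from \cite{Ca}, quotient by $\mathrm{PGL}_2$, and push forward with finite fibres because $\mathrm{Aut}(D)$ is finite for $g\geq 2$ --- is correct. One small point worth making explicit in your closedness step: to conclude that $\Phi(\mathcal{U})$ is closed in $\mathcal{M}_g$ (rather than merely that the image of the compactification is closed in $\overline{\mathcal{M}}_g$), you need that the boundary of the admissible-covers compactification maps into $\overline{\mathcal{M}}_g\setminus\mathcal{M}_g$; this holds because an admissible cover over a nodal base has nodal total space, so $\Phi(\mathcal{U})=\overline{\Phi}(\overline{\mathcal{U}})\cap\mathcal{M}_g$. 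With that remark added, your argument is sound --- but for the purposes of this paper the theorem is simply a citation, and you need not reprove it.
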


We remark that the theorem applies to those numerical types that define a dihedral cover, that is, that satisfy the Riemann Existence Theorem.

We want to give a generating vector of a dihedral group $D_{2l}$ in such a way that it defines an action on a curve $D$ with exactly the number of fixed points determined by $(t,r,s,k)$, and possibly some other coming from other powers of the automorphism $ij$. Since we have imposed that $D/D_{2l} =\mathbb{P}^1$, the type of the vector will be $(0; m_1, \dots, m_n)$. We omit the zero from now on and write only $(m_1, \dots, m_n)$. 

By Riemann's Existence Theorem, having such a vector we can conclude that the family corresponding to that numerical type is non-empty. There are several possible choices of generating vectors with the same numerical type; we give one concrete possibility to prove existence, and then by Theorem \ref{CataneseThm} we get an irreducible family of dihedral covers. Once we have such an action, we can construct a non completing diagram \eqref{diagdihe} satisfying all our conditions with $C=D/\langle j \rangle$ and $B=D/\langle i \rangle$.

We study the properties of the curves in each family. In particular, we compute the genus of $D$, $C$ and $B$, the arithmetic genus of $\widetilde{B}$ and its self-intersection. Moreover, we compute the dimension of each family, with special attention to the dimension of the image in the moduli space of curves of genus $g(C)$. 

To begin with, we prove the following lemma.

\begin{lemma}\label{pfquoc}
Let $D$ be a curve with an action of $D_{2l}=\langle i,j \rangle$ where $i$ and $j$ are two involutions. Let $C=D/\langle j\rangle $ and $\beta$ be the automorphism in $C$ induced by $(ij)^l$. Then, $\nu(\beta)=\frac{1}{2}(\nu(i)+\nu((ij)^l))$ if $l$ is odd, and $\nu(\beta)=\frac{1}{2}(\nu(j)+\nu((ij)^l))$ if $l$ is even.
\end{lemma}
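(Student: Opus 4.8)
The plan is to analyze the fixed points of $\beta$ on $C = D/\langle j\rangle$ by pulling back under the degree-two quotient map $q: D \to C$. A point $Q \in C$ is fixed by $\beta$ exactly when the element $(ij)^l$ of $D_{2l}$ either fixes a point of the fiber $q^{-1}(Q)$ or swaps the two points of the fiber; I would set up a correspondence between such $Q$ and the points of $D$ fixed by the relevant elements of $D_{2l}$. The key structural fact is that $(ij)^l$ is the unique central involution of $D_{2l}$, so in $D_{2l}$ we have $j \cdot (ij)^l = (ij)^l \cdot j$, and the product $j (ij)^l$ is an involution; whether this product lies in the conjugacy class of $i$ or of $j$ depends on the parity of $l$. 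Concretely, writing $r = ij$ so that $D_{2l} = \langle r, j \mid r^{2l}=j^2=1,\ jrj=r^{-1}\rangle$, the reflections split into two classes $\{j r^{2m}\}$ and $\{j r^{2m+1}\}$ when $l$ is even, and form a single class when $l$ is odd — but here $(ij)^l = r^l$, and $j r^l$ has parity of $l$. This is exactly the dichotomy in the statement: $j(ij)^l$ is conjugate to $j$ if $l$ is even and conjugate to $i$ if $l$ is odd (since $i = jr^{2l-1}$ or similar, $i$ lies in the "odd" class).

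First I would classify the points $P \in D$ with $q(P)$ fixed by $\beta$. Such a $P$ satisfies $(ij)^l(P) \in \{P, j(P)\}$. If $(ij)^l(P) = P$, then $P$ is a fixed point of the central involution $(ij)^l$; these $\nu((ij)^l)$ points map in pairs (via the free... — one must check $j$ acts freely on them, which follows from Lemma \ref{cycdihe}, since a common fixed point of $(ij)^l$ and $j$ would force a cyclic subgroup containing both, impossible as $(ij)^l$ is a nontrivial even power when $l \geq 2$... here one needs $(ij)^2 \mid (ij)^l$, i.e. care when $l$ is odd — actually $(ij)^l$ with $l$ odd need not be a power of $(ij)^2$, so I would instead invoke Proposition \ref{cyc}: $\langle j, (ij)^l\rangle$ would be cyclic, but it contains the two commuting involutions $j$ and $(ij)^l$, hence is $\mathbb{Z}/2 \times \mathbb{Z}/2$, not cyclic) onto $\frac12 \nu((ij)^l)$ points of $C$. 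If instead $(ij)^l(P) = j(P) \neq P$, then $j(ij)^l$ fixes $P$; this is an involution of $D$ conjugate in $D_{2l}$ to $i$ (if $l$ odd) or to $j$ (if $l$ even), so it has $\nu(i)$ resp.\ $\nu(j)$ fixed points, and again these come in $j$-orbits of size two — freeness of $j$ on this fixed-point set again follows from Proposition \ref{cyc}, since a common fixed point of $j$ and $j(ij)^l$ is a fixed point of $(ij)^l$, returning us to the previous case, so the two sets are disjoint and $j$ has no fixed point among them.

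Then I would check that these two sources of fixed points of $\beta$ are disjoint and exhaust all of them, and that each contributes its count divided by two (because $q$ is two-to-one and $j$ acts without fixed points on each set, as just argued). Summing: $\nu(\beta) = \frac12\nu((ij)^l) + \frac12 \nu(j(ij)^l) = \frac12(\nu((ij)^l) + \nu(i))$ if $l$ is odd, and $\frac12(\nu((ij)^l) + \nu(j))$ if $l$ is even. The main obstacle, and the step to handle carefully, is the bookkeeping about whether $j$ acts freely on each of the two fixed-point sets in $D$ — i.e.\ ruling out a point of $D$ simultaneously fixed by $j$ and by $(ij)^l$ (or by $j(ij)^l$) — which is where Proposition \ref{cyc} (a stabilizer in $\mathrm{Aut}(D)$ is cyclic when $g(D)\geq 2$) together with the Klein-four-group structure $\langle j, (ij)^l\rangle \cong (\mathbb{Z}/2)^2$ does the work; one should also keep track of possible ramification contributions from $q$ itself but those are irrelevant here since they affect $C$'s structure, not the fixed locus of $\beta$. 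The identification of the conjugacy class of $j(ij)^l$ via the parity of $l$ is elementary dihedral group theory, of the same flavor as Lemma \ref{cycdihe}, and I would simply state it.
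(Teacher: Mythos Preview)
Your approach is essentially the same as the paper's: both lift the fixed-point question to $D$ via the identification of a point of $C=D/\langle j\rangle$ with a $j$-orbit $\{P,j(P)\}$ (the paper phrases this as embedding $C\hookrightarrow D^{(2)}$ via $P\mapsto P+j(P)$), observe that $\beta$ fixes such an orbit iff $(ij)^l(P)\in\{P,j(P)\}$, and then identify the conjugacy class of $(ij)^l j$ according to the parity of $l$. Your treatment is in fact more careful than the paper's, which divides by two without explicitly justifying that $j$ acts freely on the relevant fixed-point sets; your use of Proposition~\ref{cyc} (cyclic stabilizers) to rule out a common fixed point of $j$ and $(ij)^l$ is exactly the missing justification.
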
  

\begin{proof}

Since $(ij)^l$ is in the center of $D_{2l}$, its action descends to $C$ and $\beta$ is well defined.

Consider $C$ embedded in $D^{(2)}$ as $\{P+j(P),\ P\in D\}$. In this way, the action of $\beta$ on $C$ is
\[
\beta(P+j(P))=(ij)^l(P)+(ij)^l(j(P))=(ij)^l(P)+j((ij)^l(P)).
\] 

A point is fixed by $\beta$ when either $P=(ij)^l(P)$ or $P=(ij)^lj(P)$. From the former we obtain $\frac{1}{2}\nu\left((ij)^l\right)$ points of $C$ fixed by $\beta$ and from the later $\frac{1}{2}\nu\left((ij)^lj\right)$. When $l$ is odd $(ij)^lj$ and $i$ are conjugate in $D_{2l}$, so $\nu((ij)^lj)=\nu(i)$ and when $l$ is even $(ij)^lj$ and $j$ are conjugate in $D_{2l}$ and hence $\nu((ij)^lj)=\nu(j)$.
\end{proof} 

We will denote by $\beta_C\in \mathrm{Aut}(C)$ the action on $C$ induced by $(ij)^{l}$ and by $\beta_B$ the action induced on $B$.

By the discussion in Section \ref{dihe}, knowing the number of fixed points corresponding to the different conjugacy classes in $D_{2l}$, the computation of the genus of $D$, $C$ and $B$, the arithmetic genus of $\widetilde{B}$ and the self-intersection of $\widetilde{B}$ is straight forward. We do not include these computations. In a similar way, the conclusions about $C$ and $B$ being hyperelliptic or bielliptic come from the action of $\beta_C$ or $\beta_B$; the details are omitted since the arguments are based in a repeated use of Riemann-Hurwitz Theorem and Lemma \ref{pfquoc}.

We call $\mathcal{D}$ the irreducible variety parametrizing the dihedral covers of $\mathbb{P}^1$ of a fixed numerical type. The image of $\mathcal{D}$ in the moduli space $\mathcal{M}_h$, given by forgetting the action, is an irreducible variety of the same dimension (Theorem \ref{CataneseThm}). We want to study the morphism $\eta$ from $\mathcal{D}$ to $\mathcal{M}_g$ that sends $(D, \rho)$ to $[C]$, and we wonder in which cases it has positive dimensional fibers. 

We study each numerical case $(t,r,s,k)$ separately to finish the proof of Theorem \ref{clasdeg2}. We give some details in the first case and omit them for the rest of cases. Before, we make some general remarks.

First, we consider the morphism $q:D/\langle j, (ij)^l\rangle \rightarrow D/D_{2l}\cong \mathbb{P}^1$. We observe that for $l\neq 2$ it is not Galois since $\langle j, (ij)^l\rangle$ is not normal in $D_{2l}$, and it is Galois for $l=2$.

Second, to give a curve $D$ with an action of $D_{2l}$ is equivalent to give $\mathbb{P}^1$ with a certain number, $n$, of marked points, and the branching data for the map. To avoid automorphisms, we can fix three of these points to be $0$, $1$ and $\infty$. As we change the rest of points, we change the pair $(D, \rho)$ in the family $\mathcal{D}$. 

Third, we will show that in all cases the curve $C$ is $\gamma$-hyper\-elliptic for $\gamma=0,1$ with $\beta_C=i_{\gamma}$, hence, to give the curve $C$ is equivalent to give $\mathbb{P}^1$ or the curve $E$, with the branch points of $p$ (the $\gamma$-hyperelliptic morphism) marked ($m$ points). 

We have the following diagram of curves for each described action $\rho$ of $D_{2l}=\langle i,j \rangle$ on a curve $D$:
\[
\xymatrix@R+5pt{
& D \ar[dl] \ar[d] \ar[dr] &\\
C=D/\langle j \rangle \ar[d]^{p}& D/\langle (ij)^l\rangle \ar[dl] \ar[dd] \ar[dr] & B=D/\langle i \rangle \ar[d]\\
D/\langle j, (ij)^l \rangle \ar[dr]^{q} && D/\langle i,(ij)^l \rangle \ar[dl]\\
& \mathbb{P}^1 &
}
\]
We observe that for $l\neq 2$ the curves $D/\langle j, (ij)^l$ and $D/\langle i,(ij)^l \rangle$ are isomorphic because $\langle j, (ij)^l$ and $\langle i,(ij)^l \rangle$ are conjugate.

In some cases we add a figure showing the ramification of the morphisms $p$ and $q$ in order to clarify the arguments. The points marked with a diamond are ramification and branch points for $p$. Those marked with a bow-tie are ramification and branch points for $q$. The points marked with a cross are both branch and ramification points for $p$ and $q$ respectively.

Note that the morphism $\eta$ (that maps $(D,\rho)\in \mathcal{D}$ to $[C]\in\mathcal{M}_g$) is equivalent to send the curve determined by the data 
\[
\{\mathbb{P}^1; 0,1,\infty, x_1,\dots, x_{n-3}\}
\]
and the monodromy description, to the curve determined by the data $\{F; x_1,\dots, x_m\}$ were $F$ is the genus $\gamma$ curve given by the quotient of $C$ by its $\gamma$-hyperelliptic involution. Therefore, studying the fibers of $\eta$ is equivalent to studying the fibers of the morphism $\mathcal{M}_{0,n}\times \{\rho\} \rightarrow \mathcal{M}_{\gamma, m}$ defined by the previous correspondence. 

Finally, given a curve $[C]$ in the image of $\eta$, we consider the data determined by its $\gamma$-hyperelliptic involution, which is unique for $\gamma=0$ and there is at most a finite number of possibilities for $\gamma=1$. If we know the morphism $q$, we can recover the data that determines $(D, \rho)$ by taking the images of the branch points of $p$ together with the rest of branch points of $q$. Therefore, we translate the question on the dimension of the fiber of $\eta$ to a question on the number of possible morphisms $q$ for a given curve $[C]$ in the image of $\eta$. Given $\mathbb{P}^1$ (respectively $E$) with $m$ marked points and some information on the branching type of $q$, we want to determine whether there are a finite number of possible $q$'s and a finite number of curves $[C]$, or there is a positive dimensional family of $q$'s and $\eta$ has positive dimensional fibers.

We recall that by the discussion above we have the following numerical conditions:
\begin{align}
h\leq s+ \frac{1}{2}(r+k) \tag{\ref{2b}}\\
l(t+r-8)+(l-2)(s+r+k)< 0 \tag{\ref{condpg}}\\
\frac{s+r+k}{8-(t+r)}<\frac{l}{l-2}\leq 2 (\text{ for } l\geq 4)\tag{\ref{condp}}\\
\gamma=0\\
t+r\leq 6 \label{tr6}\\
r,\ s,\ t \text{ even};\ k \text{ multiple of }4 \nonumber
\end{align}

\vspace{5pt}

\textbf{Assume first that $\boldsymbol{l\geq 4}$.} By conditions \eqref{2b}, \eqref{condp} and \eqref{tr6} together with the parities of $r$, $s$, $t$ and $k$, the possibilities for $(t,r,s,k)$ are $28$:
\[
\begin{array}{lllllllll}
t=0& r=0 & s=0 & k\in\{0,4,8,12\} &&t=2 & r=0 & s=2 & k\in\{0,4,8\}\\
t=0&r=0&s=4 &k\in\{ 0,4,8\}&&t=2&r=0&s=6 & k\in\{0,4\}\\
t=0&r=0&s=8 &k\in\{ 0,4\}&& t=2 &r=0&s=10 & k=0\\
 t=0&r=0&s=12 & k=0&& t=2  &r=2 & s=2 & k=0\\
t=0& r=2&s=0& k\in\{0,4,8\} && t= 4 & r=0 & s=0 & k\in\{0,4\}\\
t=0&r=2&s=4& k\in\{0,4\} && t=4 &r=0&s=4 & k=0\\
 t=0&r=2&s=8 & k=0  && t=4 &r=2& s=0 &k= 0 \\
	t=0&r=4&s= 0 & k=0&&&&&
\end{array}
\]

By the Riemann Hurwitz formula for $D\rightarrow D/D_{2l}=\mathbb{P}^1$ and considering that we need a compatible generating vector, we reduce this case to three possibilities (see \cite{Tesi} for a detailed discussion).

(D10.1)  $\boldsymbol{t=0}$, $\boldsymbol{r=2}$, $\boldsymbol{s=4}$, $\boldsymbol{k=0}$.

By the Riemann-Hurwitz formula for $D\rightarrow D/D_{2l}=\mathbb{P}^1$, we find that there is another branch point with $m_i=2$ and also that $l=5$. Thus, the additional branch point corresponds to points fixed by $(ij)^5\in D_{10}$.  

Consequently, we impose one branch point coming from $ij$ (image of points fixed by it), $2$ branch points coming from $j$ (and its conjugates $(ij)^{2\alpha}j$) and one coming from $(ij)^5$. 

The $(10,10,2,2)$-generating vector of $D_{10}$ given by $\left(ij, (ij)^5, (ij)^4j, j\right)$ satisfies all conditions. 

We observe that $\mathcal{D}$ is the $1$-dimensional family of all curves of genus $5$ with maximal dihedral symmetry (see \cite{Buj}). 

\textbf{Claim:} The map $\eta$ is finite.

\emph{Proof of the claim:} First, since the morphism $q:\mathbb{P}^1 \rightarrow \mathbb{P}^1$ has $3$ branch points, there are a finite number of such morphisms modulo automorphisms of $\mathbb{P}^1$. 

Second, we observe that $D/D_{10}\cong \mathbb{P}^1$ has four marked points. To avoid automorphisms we fix $x$ to be the branch point associated with (image of) points fixed by $(ij)^5$, $0$ to be associated with the points fixed by $ij$ and $\{1, \infty\}$ to be associated with $j$.

Third, $C/\langle \beta_C \rangle \cong \mathbb{P}^1$ has six marked points, the points where the hyperelliptic morphism $p$ is branched. We observe that since $p$ is the projection given by the action of $\beta_C$, five of the branch points of $p$ are a fiber of $q$, in particular, the images of points fixed by $(ij)^5$ in $D$, and the sixth has ramification index $5$ in $q$, in particular, the image of the points fixed by $ij$.

In the following figure we show the ramification of the morphisms $p$ and $q$. 
\begin{center}
\begin{tikzpicture}[line cap=round,line join=round,>=triangle 45,x=0.32cm,y=0.32cm]
\clip(-10,-5) rectangle (19,1.5);
\draw (16,0)-- (16,-5);
\draw (15,1.5) node[anchor=north west] {$\mathbb{P}^1$};
\draw (7,1.5) node[anchor=north west] {$\mathbb{P}^1$};
\draw (-5,1.5) node[anchor=north west] {$C$};
\draw [->,color=black] (-1,-2) -- (4,-2);
\draw [->] (10,-2) -- (14,-2);
\draw (0,-0.7) node[anchor=north west] {$_{(2:1)}$};
\draw (10.5,-0.7) node[anchor=north west] {$_{(5:1)}$};
\draw (11,-2) node[anchor=north west] {$\boldsymbol{q}$};
\draw (0.5,-2) node[anchor=north west] {$p$};
\draw (16.2,-3.3) node[anchor=north west] {$_{ij}$};
\draw (16.2,-2.3) node[anchor=north west] {$_{(ij)^5}$};
\draw (16.2,-1.3) node[anchor=north west] {$_j$};
\draw (16.2,-0.3) node[anchor=north west] {$_j$};
\draw (14.5,-3.5) node[anchor=north west] {$_0$};
\draw (14.5,-2.5) node[anchor=north west] {$_x$};
\draw (14.5,-1.5) node[anchor=north west] {$_1$};
\draw (14.3,-0.5) node[anchor=north west] {$_{\infty}$};
\begin{scriptsize}
\fill [color=black] (-2,-1) circle (1.5pt);
\fill [color=black] (-3,-1) circle (1.5pt);
\fill [color=black] (-4,-1) circle (1.5pt);
\fill [color=black] (-5,-1) circle (1.5pt);
\fill [color=black] (-6,-1) circle (1.5pt);
\fill [color=black] (-7,-1) circle (1.5pt);
\fill [color=black] (-7,-2) circle (1.5pt);
\fill [color=black] (-6,-2) circle (1.5pt);
\fill [color=black] (-5,-2) circle (1.5pt);
\fill [color=black] (-4,-2) circle (1.5pt);
\fill [color=black] (-3,-2) circle (1.5pt);
\fill [color=black] (-2,-2) circle (1.5pt);
\draw (-2,-3) node {$\diamondsuit$};
\draw  (-3,-3) node {$\diamondsuit$};
\draw  (-4,-3) node {$\diamondsuit$};
\draw  (-5,-3) node {$\diamondsuit$};
\draw  (-6,-3) node {$\diamondsuit$};
\draw  (-4,-4) node {$\diamondsuit$};
\draw  (5,-3) node {$\diamondsuit$};
\draw  (6,-3) node {$\diamondsuit$};
\draw  (7,-3) node {$\diamondsuit$};
\draw  (8,-3) node {$\diamondsuit$};
\draw  (9,-3) node {$\diamondsuit$};
\draw  (7,-4)-- ++(-2.5pt,-2.5pt) -- ++(5.0pt,5.0pt) ++(-5.0pt,0) -- ++(5.0pt,-5.0pt);
\fill [color=black] (6,-2) circle (1.5pt);
\draw (7,-2)  node {$\bowtie$};
\draw (8,-2) node {$\bowtie$};
\draw (8,-1) node {$\bowtie$};
\draw (7,-1) node {$\bowtie$};
\fill [color=black] (6,-1) circle (1.5pt);
\draw (16,-4) node {$\bowtie$};
\fill [color=black] (16,-3) circle (1.5pt);
\draw (16,-2) node {$\bowtie$};
\draw (16,-1) node {$\bowtie$};
\end{scriptsize}
\end{tikzpicture}
\end{center}

Therefore, $\eta$ is equivalent to send the curve determined by the data $\{\mathbb{P}^1; 0, 1, \infty, x\}$ to the one determined by $\{\mathbb{P}^1; q^{-1}(0), q^{-1}(x)\}$. 

Finally, given a curve $[C]$ in the image of $\mathcal{D}$, we recover $\eta^{-1}([C])$ taking the image of the branch points of its hyperelliptic involution ($0$ and $x$) by a suitable $q$ together with the other two branch points of $q$ ($1$ and $\infty$). By a suitable $q$ we mean that one of the branch points of $p$ is a ramification point of $q$, and the other five are a fiber of $q$.

Such a morphism $q$ exists because $C$ is in the image of $\mathcal{D}$, and therefore it is the quotient of a $D$. Moreover, there are only a finite number of possibilities for $q$, and hence, we can recover at most a finite number of $(D, \rho)\in \mathcal{D}$. $\Diamond$

(D10.2) $\boldsymbol{t=2}$, $\boldsymbol{r=0}$, $\boldsymbol{s=2}$, $\boldsymbol{k=4}$. 
 
By the Riemann-Hurwitz formula for $D\rightarrow D/D_{2l}=\mathbb{P}^1$, we find that there is one more branch point with $m_i=2$ and also that $l=5$. The $(5,2,2,2)$-generating vector of $D_{10}$ given by $\left((ij)^2, (ij)^5, (ij)^2i, j\right)$ satisfies all conditions.   

We observe that $\mathcal{D}$ is the $1$-dimensional family of all curves of genus $4$ with maximal dihedral symmetry (see \cite{Buj}). 

Moreover, since $q$ has three branch points we deduce, with the arguments used in point (D10.1), that the map $\eta$ is finite.

(D10.3) $\boldsymbol{t=2}$, $\boldsymbol{r=0}$, $\boldsymbol{s=6}$, $\boldsymbol{k=0}$. 

By the Riemann-Hurwitz formula for $D\rightarrow D/D_{2l}=\mathbb{P}^1$, we find that there is another branch point with $m_i=2$ and also that $l=5$. The $(2,2,2,2,2)$-generating vector of $D_{10}$ given by $\left((ij)^5, (ij)^4i, j, j, j\right)$ satisfies all conditions.

\textbf{Conjecture}: \emph{We expect the map $\eta$ to be finite.} Indeed, there should be only a finite number of possibilities for $q$. Since $q$ is a degree five morphism from $\mathbb{P}^1$ to $\mathbb{P}^1$, in homogeneous coordinates it is given by two degree five polynomials. Given five of the branch points of $p$, we assume that their image is $0\in \mathbb{P}^1$ and we have one of the polynomials determined. Assuming that the sixth point has image $\infty\in \mathbb{P}^1$, we obtain one factor of the other polynomial. When we impose the branching type $(1,2,2)$ for each of the four branch points, we obtain a system of twelve equations with five unknowns. 

The resolution of this system of equations has a very high computational cost\footnote{System specifications: Processor: Intel Xeon W3520 @2.67GHz. 4 GB RAM. Using Windows 64 bits and Wolfram Mathematica 9} because of the high degree of the equations involved. We were not able to finish it. Probably a more refined algorithm would be needed. Nevertheless, the high number of equations compared to the number of unknowns takes us to conjecture that this system of equations has a finite number of solutions. If so, we could recover at most a finite number of $(D, \rho)\in \mathcal{D}$. $\Diamond$

Finally notice that in all three cases $p_a(\widetilde{B})=2g(C)-1$.

\vspace{5pt}

\textbf{Assume now that $\boldsymbol{l=3}$.} We assume that $\langle i,j \rangle=D_6$. Since $s,r,k$ and $t$ are even integers, condition \eqref{condpg} is equivalent to
\begin{equation}\label{condaut3}
s+4r+k+3t\leq 22.
\end{equation} 

In $D_6$ there are six conjugacy classes: $[Id],\ [i],\ [j],\ [ij],\ [(ij)^2]\textrm{ and } [(ij)^3]$. We denote by $p=\nu((ij)^3)$. Thus, the Riemann-Hurwitz formula for $D\rightarrow D/D_6$ reads $h=\frac{1}{2}\left(-22+3s+3t+5r+2k+p\right)$. By \eqref{2b} we deduce that 
\begin{equation}\label{condauts}
s+4r+k+3t+p\leq 22.
\end{equation}
Notice that if it is satisfied, then also \eqref{condaut3} is satisfied.
 
Now, we observe that we can embed $D_6$ in $S_6$ in such a way that $i$ is odd and $j$ is even (thus $ij$ is odd, $(ij)^2$ is even and $(ij)^3$ is odd). Since we will need the product one condition for the generating vector, we need to impose $\frac{t}{2}+\frac{r}{2}+\frac{p}{6}$ to be even, or which is the same, $t+r+\frac{p}{3}$ multiple of four. Furthermore, we can also embed $D_6$ in $S_6$ in such a way that $i$ is even and $j$ is odd and hence we need to impose $s+r+\frac{p}{3}$ multiple of four. By this, inequality \eqref{condauts}, our previous conditions and considering that we need a compatible generating vector, we find the following ten possibilities:

(D6.1) $\boldsymbol{t=0}$, $\boldsymbol{r=0}$, $\boldsymbol{p=12}$, $\boldsymbol{s=4}$, $\boldsymbol{k=4}$. 

The generating vector $\left((ij)^3, (ij)^3, (ij)^2, (ij)^4j, j\right)$ satisfies all conditions. 

Since $q$ has three branch points, by the arguments in case (D10.1), we deduce that the map $\eta$ is finite.

(D6.2) $\boldsymbol{t=0}$, $\boldsymbol{r=2}$, $\boldsymbol{p=6}$, $\boldsymbol{s=4}$, $\boldsymbol{k=4}$. 

The generating vector $\left(ij, (ij)^3, (ij)^2, j, j\right)$ satisfies all conditions.

\textbf{Claim:} The map $\eta$ is finite.

\emph{Proof of the claim:} Given a curve $[C]$ in the image of $\mathcal{D}$, we recover $\eta^{-1}([C])$ taking the image of the branch points of its bielliptic involution by a suitable $q$, together with the other three branch points of $q$. By a suitable $q$ we mean that one of the branch points of $p$ has ramification index $3$ in $q$, and the other $3$ are a fiber of $q$.

In the following figure we show the ramification of the morphisms $p$ and $q$. 
\begin{center}
\begin{tikzpicture}[line cap=round,line join=round,>=triangle 45,x=0.32cm,y=0.32cm]
\clip(-4,-6) rectangle (19,1.5);
\draw (16,0)-- (16,-6);
\draw (15,1.5) node[anchor=north west] {$\mathbb{P}^1$};
\draw (7,1.5) node[anchor=north west] {$E$};
\draw (-2,1.5) node[anchor=north west] {$C$};
\draw [->,color=black] (1,-2) -- (6,-2);
\draw [->] (10,-2) -- (14,-2);
\draw (2,-0.7) node[anchor=north west] {$_{(2:1)}$};
\draw (10.5,-0.7) node[anchor=north west] {$_{(3:1)}$};
\draw (11,-2) node[anchor=north west] {$\boldsymbol{q}$};
\draw (2.5,-2) node[anchor=north west] {$p$};
\draw (16.2,-3.3) node[anchor=north west] {$_{(ij)^3}$};
\draw (16.2,-2.3) node[anchor=north west] {$_{(ij)^2}$};
\draw (16.2,-1.3) node[anchor=north west] {$_j$};
\draw (16.2,-0.3) node[anchor=north west] {$_j$};
\draw (16.2,-4.3) node[anchor=north west] {$_{ij}$};
\draw (14.3,-3.5) node[anchor=north west] {$_{x_2}$};
\draw (14.5,-2.5) node[anchor=north west] {$_1$};
\draw (14.3,-1.5) node[anchor=north west] {$_{\infty}$};
\draw (14.5,-0.5) node[anchor=north west] {$_{0}$};
\draw (14.3,-4.5) node[anchor=north west] {$_{x_1}$};
\begin{scriptsize}
\fill [color=black] (0,-1) circle (1.5pt);
\fill [color=black] (-1,-1) circle (1.5pt);
\fill [color=black] (-2,-1) circle (1.5pt);
\fill [color=black] (-3,-1) circle (1.5pt);
\fill [color=black] (-3,-2) circle (1.5pt);
\fill [color=black] (-2,-2) circle (1.5pt);
\fill [color=black] (-1,-2) circle (1.5pt);
\fill [color=black] (0,-2) circle (1.5pt);
\fill [color=black] (-2,-3) circle (1.5pt);
\fill [color=black] (-1,-3) circle (1.5pt);
\draw  (-1,-4) node {$\diamondsuit$};
\draw  (-2,-4) node {$\diamondsuit$};
\draw  (-3,-4) node {$\diamondsuit$};
\draw  (-2,-5) node {$\diamondsuit$};
\draw  (7,-4) node {$\diamondsuit$};
\draw  (9,-4) node {$\diamondsuit$};
\draw  (8,-4) node {$\diamondsuit$};
\draw  (8,-5)-- ++(-2.5pt,-2.5pt) -- ++(5.0pt,5.0pt) ++(-5.0pt,0) -- ++(5.0pt,-5.0pt);
\fill [color=black] (7,-2) circle (1.5pt);
\draw (8,-2) node {$\bowtie$};
\draw (8,-1) node {$\bowtie$};
\draw (8,-3) node {$\bowtie$};
\fill [color=black] (7,-1) circle (1.5pt);
\fill [color=black] (7,-2) circle (1.5pt);
\draw (16,-3) node {$\bowtie$};
\fill [color=black] (16,-4) circle (1.5pt);
\draw (16,-2) node {$\bowtie$};
\draw (16,-1) node {$\bowtie$};
\draw (16,-5) node {$\bowtie$};
\end{scriptsize}
\end{tikzpicture}
\end{center}

Such a morphism $q$ exists by construction, and there are only a finite number of possibilities for $q$. Indeed, since the elliptic curve is given, one of the branch points of $p$ determines the immersion of $E$ in $\mathbb{P}^2$ in such a way that it is an inflexion point. For at least one of the four possible immersions the other three points lay over a line. Then, the projection point is the intersection of the tangent to the inflexion and the line containing the other three. $\Diamond$

(D6.3) $\boldsymbol{t=0}$, $\boldsymbol{r=2}$, $\boldsymbol{p=6}$, $\boldsymbol{s=8}$, $\boldsymbol{k=0}$. 

The generating vector $\left(ij, (ij)^3, (ij)^2j, j,j, j\right)$ satisfies all conditions. 

\textbf{Claim:} The map $\eta$ is finite.

\emph{Proof of the claim:} Given a curve $C$ in the image of $\mathcal{D}$, we recover $\eta^{-1}([C])$ taking the image of the branch points of its bielliptic involution by a suitable $q$ together with the other four branch points of $q$. By a suitable $q$ we mean that one of the branch points of $p$ has ramification order $3$ in $q$, and the other three are a fiber of $q$.

As in case (D6.2) such a morphism $q$ exists and there are only a finite number of possibilities for it. $\Diamond$

(D6.4) $\boldsymbol{t=0}$, $\boldsymbol{r=4}$, $\boldsymbol{p=0}$, $\boldsymbol{s=4}$, $\boldsymbol{k=0}$. 

The generating vector $\left( ij, ji, j, j\right)$ satisfies all conditions.

\textbf{Claim:} The map $\eta$ is finite.

\emph{Proof of the claim:} Given a curve $[C]$ in the image of $\mathcal{D}$, we recover $\eta^{-1}([C])$ taking the image of the branch points of its bielliptic involution by a suitable $q$, together with the other two branch points of $q$. By a suitable $q$ we mean that the two branch points of $p$ have ramification index $3$ in $q$.

In the following figure we show the ramification of the morphisms $p$ and $q$. 
\begin{center}
\begin{tikzpicture}[line cap=round,line join=round,>=triangle 45,x=0.32cm,y=0.32cm]
\clip(-4,-5) rectangle (19,1.5);
\draw (16,0)-- (16,-5);
\draw (15,1.5) node[anchor=north west] {$\mathbb{P}^1$};
\draw (7,1.5) node[anchor=north west] {$E$};
\draw (-2,1.5) node[anchor=north west] {$C$};
\draw [->,color=black] (1,-2) -- (6,-2);
\draw [->] (10,-2) -- (14,-2);
\draw (2,-0.7) node[anchor=north west] {$_{(2:1)}$};
\draw (10.5,-0.7) node[anchor=north west] {$_{(3:1)}$};
\draw (11,-2) node[anchor=north west] {$\boldsymbol{q}$};
\draw (2.5,-2) node[anchor=north west] {$p$};
\draw (16.2,-3.3) node[anchor=north west] {$_{ij}$};
\draw (16.2,-2.3) node[anchor=north west] {$_{ji}$};
\draw (16.2,-1.3) node[anchor=north west] {$_j$};
\draw (16.2,-0.3) node[anchor=north west] {$_j$};
\draw (14.5,-3.5) node[anchor=north west] {$_{0}$};
\draw (14.5,-2.5) node[anchor=north west] {$_1$};
\draw (14.3,-1.5) node[anchor=north west] {$_{\infty}$};
\draw (14.5,-0.5) node[anchor=north west] {$_{x}$};
\begin{scriptsize}
\fill [color=black] (0,-1) circle (1.5pt);
\fill [color=black] (-1,-1) circle (1.5pt);
\fill [color=black] (-2,-1) circle (1.5pt);
\fill [color=black] (-3,-1) circle (1.5pt);
\fill [color=black] (-2,-2) circle (1.5pt);
\fill [color=black] (-1,-2) circle (1.5pt);
\fill [color=black] (0,-2) circle (1.5pt);
\fill [color=black] (-3,-2) circle (1.5pt);
\draw  (-2,-3) node {$\diamondsuit$};
\draw  (-2,-4) node {$\diamondsuit$};
\draw  (8,-3)-- ++(-2.5pt,-2.5pt) -- ++(5.0pt,5.0pt) ++(-5.0pt,0) -- ++(5.0pt,-5.0pt);
\draw  (8,-4)-- ++(-2.5pt,-2.5pt) -- ++(5.0pt,5.0pt) ++(-5.0pt,0) -- ++(5.0pt,-5.0pt);
\fill [color=black] (7,-2) circle (1.5pt);
\fill [color=black] (7,-1) circle (1.5pt);
\draw (8,-2) node {$\bowtie$};
\draw (8,-1) node {$\bowtie$};
\fill [color=black] (7,-1) circle (1.5pt);
\fill [color=black] (7,-2) circle (1.5pt);
\draw (16,-3) node {$\bowtie$};
\draw (16,-4) node {$\bowtie$};
\draw (16,-2) node {$\bowtie$};
\draw (16,-1) node {$\bowtie$};
\end{scriptsize}
\end{tikzpicture}
\end{center}

Such a morphism $q$ exists because $C$ is in the image of $\mathcal{D}$, and therefore it is the quotient of a $D$. Moreover, there are only a finite number of possibilities for $q$. Indeed, since we have the elliptic curve given as the quotient of $C$ by its bielliptic involution, one of the branch points of $p$ determine the immersion of $E$ in $\mathbb{P}^2$ in such a way that it is an inflexion point, and then necessarily the other point will be another inflexion. The projection point will be then the intersection of the respective tangent lines. Thus, we can recover at most a finite number of $(D, \rho)\in \mathcal{D}$. $\Diamond$

(D6.5) $\boldsymbol{t=2}$, $\boldsymbol{r=0}$, $\boldsymbol{p=6}$, $\boldsymbol{s=2}$, $\boldsymbol{k=8}$. 

The generating vector $\left(j, i, (ij)^3, (ij)^2, (ij)^2\right)$ satisfies all conditions. 

\textbf{Claim:} The map $\eta$ is finite.

\emph{Proof of the claim:}  Given a curve $[C]$ in the image of $\mathcal{D}$, we recover $\eta^{-1}([C])$ taking the image of the branch points of its bielliptic involution by a suitable $q$ together with the other three branch points of $q$. By a suitable $q$ we mean that one of the branch points of $p$ is not ramified but lies over a branch point and the other $3$ are a fiber of $q$.

In the following figure we show the ramification of the morphisms $p$ and $q$. 
\begin{center}
\begin{tikzpicture}[line cap=round,line join=round,>=triangle 45,x=0.32cm,y=0.32cm]
\clip(-4,-6) rectangle (19,1.5);
\draw (16,0)-- (16,-6);
\draw (15,1.5) node[anchor=north west] {$\mathbb{P}^1$};
\draw (7,1.5) node[anchor=north west] {$E$};
\draw (-2,1.5) node[anchor=north west] {$C$};
\draw [->,color=black] (1,-2) -- (6,-2);
\draw [->] (10,-2) -- (14,-2);
\draw (2,-0.7) node[anchor=north west] {$_{(2:1)}$};
\draw (10.5,-0.7) node[anchor=north west] {$_{(3:1)}$};
\draw (11,-2) node[anchor=north west] {$\boldsymbol{q}$};
\draw (2.5,-2) node[anchor=north west] {$p$};
\draw (16.2,-4.3) node[anchor=north west] {$_j$};
\draw (16.2,-3.3) node[anchor=north west] {$_{i}$};
\draw (16.2,-2.3) node[anchor=north west] {$_{(ij)^3}$};
\draw (16.2,-1.3) node[anchor=north west] {$_{(ij)^2}$};
\draw (16.2,-0.3) node[anchor=north west] {$_{(ij)^2}$};
\draw (14.3,-4.5) node[anchor=north west] {$_{\infty}$};
\draw (14.3,-3.5) node[anchor=north west] {$_{x_1}$};
\draw (14.3,-2.5) node[anchor=north west] {$_{x_2}$};
\draw (14.5,-1.5) node[anchor=north west] {$_0$};
\draw (14.5,-0.5) node[anchor=north west] {$_{1}$};
\begin{scriptsize}
\fill [color=black] (-1,-1) circle (1.5pt);
\fill [color=black] (-2,-1) circle (1.5pt);
\fill [color=black] (-1,-2) circle (1.5pt);
\fill [color=black] (-2,-2) circle (1.5pt);
\fill [color=black] (-2,-4) circle (1.5pt);
\fill [color=black] (-1,-4) circle (1.5pt);
\fill [color=black] (-1,-5) circle (1.5pt);
\fill [color=black] (0,-5) circle (1.5pt);
\fill [color=black] (-2,-5) circle (1.5pt);
\fill [color=black] (-3,-5) circle (1.5pt);
\draw  (-3,-4) node {$\diamondsuit$};
\draw  (-3,-3) node {$\diamondsuit$};
\draw  (-2,-3) node {$\diamondsuit$};
\draw  (-1,-3) node {$\diamondsuit$};
\fill [color=black] (7,-5) circle (1.5pt);
\draw  (7,-4) node {$\diamondsuit$};
\draw  (7,-3) node {$\diamondsuit$};
\draw  (8,-3) node {$\diamondsuit$};
\draw  (9,-3) node {$\diamondsuit$};
\draw (8,-5) node {$\bowtie$};
\draw (8,-4) node {$\bowtie$};
\draw (8,-1) node {$\bowtie$};
\draw (8,-2) node {$\bowtie$};
\draw (16,-5) node {$\bowtie$};
\draw (16,-4) node {$\bowtie$};
\fill [color=black] (16,-3) circle (1.5pt);
\draw (16,-2) node {$\bowtie$};
\draw (16,-1) node {$\bowtie$};
\end{scriptsize}
\end{tikzpicture}
\end{center}

Such a morphism $q$ exists by construction, and there are only a finite number of possibilities for $q$. Indeed, since the elliptic curve is given, taking three of the branch points we determine the immersion of $E$ in $\mathbb{P}^2$, and taking a line passing through the fourth and tangent to $E$ but not on this point (a finite number of such), we obtain a finite number of candidates for the projection point. Only those with two points with ramification index $3$ are possible $q$'s. $\Diamond$

(D6.6) $\boldsymbol{t=2}$, $\boldsymbol{r=0}$, $\boldsymbol{p=6}$, $\boldsymbol{s=6}$, $\boldsymbol{k=4}$.  

The generating vector $\left((ij)^3, (ij)^2, i, j,j, j\right)$ satisfies all conditions.

\textbf{Claim:} The map $\eta$ is finite.

\emph{Proof of the claim:} Given a curve $[C]$ in the image of $\mathcal{D}$, we recover $\eta^{-1}([C])$ taking the image of the branch points of its bielliptic morphism by a suitable $q$ together with the other four branch points of $q$. By a suitable $q$ we mean that one of the branch points of $p$  is non ramified lying over a branch point and the other three are a fiber of $q$.

As in (D6.5) such a morphism $q$ exists and there is a finite number of possibilities for it. $\Diamond$

(D6.7) $\boldsymbol{t=2}$, $\boldsymbol{r=2}$, $\boldsymbol{p=0}$, $\boldsymbol{s=2}$, $\boldsymbol{k=4}$. 

The generating vector $\left(i, j, ij, (ij)^4\right)$ satisfies all conditions.

\textbf{Claim:} The map $\eta$ is finite.

\emph{Proof of the claim:} Given a curve $[C]$ in the image of $\mathcal{D}$, we recover $\eta^{-1}([C])$ taking the image of the branch points of its bielliptic involution by a suitable $q$ and the other two branch points of $q$. By a suitable $q$ we mean that one of the branch points of $p$ has ramification index $3$ in $q$ and the other is non ramified with image a branch point of $q$.

In the following figure we show the ramification of the morphisms $p$ and $q$. 
\begin{center}
\begin{tikzpicture}[line cap=round,line join=round,>=triangle 45,x=0.32cm,y=0.32cm]
\clip(-4,-5) rectangle (19,1.5);
\draw (16,0)-- (16,-5);
\draw (15,1.5) node[anchor=north west] {$\mathbb{P}^1$};
\draw (7,1.5) node[anchor=north west] {$E$};
\draw (-2,1.5) node[anchor=north west] {$C$};
\draw [->,color=black] (1,-2) -- (6,-2);
\draw [->] (10,-2) -- (14,-2);
\draw (2,-0.7) node[anchor=north west] {$_{(2:1)}$};
\draw (10.5,-0.7) node[anchor=north west] {$_{(3:1)}$};
\draw (11,-2) node[anchor=north west] {$\boldsymbol{q}$};
\draw (2.5,-2) node[anchor=north west] {$p$};
\draw (16.2,-3.3) node[anchor=north west] {$_{(ij)^4}$};
\draw (16.2,-2.3) node[anchor=north west] {$_{ij}$};
\draw (16.2,-1.3) node[anchor=north west] {$_{i}$};
\draw (16.2,-0.3) node[anchor=north west] {$_{j}$};
\draw (14.5,-3.5) node[anchor=north west] {$_{x}$};
\draw (14.5,-2.5) node[anchor=north west] {$_{0}$};
\draw (14.5,-1.5) node[anchor=north west] {$_1$};
\draw (14.3,-0.5) node[anchor=north west] {$_{\infty}$};
\begin{scriptsize}
\fill [color=black] (-3,-1) circle (1.5pt);
\fill [color=black] (-2,-1) circle (1.5pt);
\fill [color=black] (-1,-1) circle (1.5pt);
\fill [color=black] (0,-1) circle (1.5pt);
\fill [color=black] (-2,-2) circle (1.5pt);
\fill [color=black] (-1,-2) circle (1.5pt);
\fill [color=black] (-1,-4) circle (1.5pt);
\fill [color=black] (-2,-4) circle (1.5pt);
\draw  (-3,-2) node {$\diamondsuit$};
\draw  (-2,-3) node {$\diamondsuit$};
\draw  (8,-3)-- ++(-2.5pt,-2.5pt) -- ++(5.0pt,5.0pt) ++(-5.0pt,0) -- ++(5.0pt,-5.0pt);
\fill [color=black] (7,-1) circle (1.5pt);
\draw  (7,-2) node {$\diamondsuit$};
\draw (8,-1) node {$\bowtie$};
\draw (8,-2) node {$\bowtie$};
\draw (8,-4) node {$\bowtie$};
\draw (16,-3) node {$\bowtie$};
\draw (16,-4) node {$\bowtie$};
\draw (16,-2) node {$\bowtie$};
\draw (16,-1) node {$\bowtie$};
\end{scriptsize}
\end{tikzpicture}
\end{center}

Such a morphism $q$ exists by construction, and there are only a finite number of possibilities for $q$. Indeed, since the elliptic curve is given, one of the branch points determines the immersion on $\mathbb{P}^2$ in such a way that it is an inflexion, and taking  a line passing through the other and tangent to $E$, but not on this point (a finite number of such), we obtain a finite number of candidates for the projection point. Only those with two points with ramification index $3$ would be possible $q$'s. $\Diamond$

(D6.8) $\boldsymbol{t=2}$, $\boldsymbol{r=2}$, $\boldsymbol{p=0}$, $\boldsymbol{s=6}$, $\boldsymbol{k=0}$. 

The generating vector $\left(j,j,j, i, ij\right)$ satisfies all conditions.

\textbf{Claim:} The map $\eta$ is finite.

\emph{Proof of the claim:} Given a curve $[C]$ in the image of $\mathcal{D}$, we recover $\eta^{-1}([C])$ taking the image of the branch points of its bielliptic involution by a suitable $q$ together with the other three branch points of $q$. By a suitable $q$ we mean that one of the branch points of $p$ has ramification index $3$, and the other is non ramified lying over a branch point of $q$.

As in (D6.7) such a morphism $q$ exists and there are only a finite number of possibilities for it. $\Diamond$

(D6.9) $\boldsymbol{t=4}$, $\boldsymbol{r=0}$, $\boldsymbol{p=0}$, $\boldsymbol{s=4}$, $\boldsymbol{k=4}$. 

The generating vector $\left(i, i, (ij)^4, (ij)^2j, j\right)$ satisfies all conditions.

\textbf{Claim:} The map $\eta$ is finite.

\emph{Proof of the claim:} Given a curve $[C]$ in the image of $\mathcal{D}$, we recover $\eta^{-1}([C])$ taking the image of the branch points of its bielliptic involution by a suitable $q$ together with the other three branch points of $q$. By a suitable $q$ we mean that the branch points of $p$ are non ramified and their images by $q$ are different branch points of $q$. 

In the following figure we show the ramification of the morphisms $p$ and $q$. 
\begin{center}
\begin{tikzpicture}[line cap=round,line join=round,>=triangle 45,x=0.32cm,y=0.32cm]
\clip(-4,-6) rectangle (19,1.5);
\draw (16,0)-- (16,-6);
\draw (15,1.5) node[anchor=north west] {$\mathbb{P}^1$};
\draw (7,1.5) node[anchor=north west] {$E$};
\draw (-2,1.5) node[anchor=north west] {$C$};
\draw [->,color=black] (1,-2) -- (6,-2);
\draw [->] (10,-2) -- (14,-2);
\draw (2,-0.7) node[anchor=north west] {$_{(2:1)}$};
\draw (10.5,-0.7) node[anchor=north west] {$_{(3:1)}$};
\draw (11,-2) node[anchor=north west] {$\boldsymbol{q}$};
\draw (2.5,-2) node[anchor=north west] {$p$};
\draw (16.2,-4.3) node[anchor=north west] {$_{i}$};
\draw (16.2,-3.3) node[anchor=north west] {$_{i}$};
\draw (16.2,-2.3) node[anchor=north west] {$_{(ij)^2}$};
\draw (16.2,-1.3) node[anchor=north west] {$_{j}$};
\draw (16.2,-0.3) node[anchor=north west] {$_{j}$};
\draw (14.5,-4.5) node[anchor=north west] {$_{0}$};
\draw (14.5,-3.5) node[anchor=north west] {$_{1}$};
\draw (14.3,-2.5) node[anchor=north west] {$_{\infty}$};
\draw (14.3,-1.5) node[anchor=north west] {$_{x_1}$};
\draw (14.3,-0.5) node[anchor=north west] {$_{x_2}$};
\begin{scriptsize}
\fill [color=black] (-3,-1) circle (1.5pt);
\fill [color=black] (-3,-2) circle (1.5pt);
\fill [color=black] (-2,-1) circle (1.5pt);
\fill [color=black] (-2,-2) circle (1.5pt);
\fill [color=black] (-2,-3) circle (1.5pt);
\fill [color=black] (-2,-4) circle (1.5pt);
\fill [color=black] (-2,-5) circle (1.5pt);
\fill [color=black] (-1,-1) circle (1.5pt);
\fill [color=black] (-1,-2) circle (1.5pt);
\fill [color=black] (-1,-3) circle (1.5pt);
\fill [color=black] (-1,-4) circle (1.5pt);
\fill [color=black] (-1,-5) circle (1.5pt);
\fill [color=black] (0,-1) circle (1.5pt);
\fill [color=black] (0,-2) circle (1.5pt);
\draw  (-3,-4) node {$\diamondsuit$};
\draw  (-3,-5) node {$\diamondsuit$};
\fill [color=black] (7,-1) circle (1.5pt);
\fill [color=black] (7,-2) circle (1.5pt);
\draw  (7,-4) node {$\diamondsuit$};
\draw  (7,-5) node {$\diamondsuit$};
\draw (8,-1) node {$\bowtie$};
\draw (8,-2) node {$\bowtie$};
\draw (8,-3) node {$\bowtie$};
\draw (8,-4) node {$\bowtie$};
\draw (8,-5) node {$\bowtie$};
\draw (16,-5) node {$\bowtie$};
\draw (16,-3) node {$\bowtie$};
\draw (16,-4) node {$\bowtie$};
\draw (16,-2) node {$\bowtie$};
\draw (16,-1) node {$\bowtie$};
\end{scriptsize}
\end{tikzpicture}
\end{center}

Such a morphism $q$ exists by construction, and there are only a finite number of possibilities for $q$. Indeed, let $x$ and $y$ be the branch points of $p$. A suitable $q$ can be described by the immersion of $E$ in $\mathbb{P}^2$ given by the linear series of the fibers, followed by the projection from a point not belonging to the image of $E$.  Assume that we have such an immersion. The point with ramification index three is an inflection of the curve in $\mathbb{P}^2$ and the projection point lies over the tangent in this point. Moreover, the lines linking $x$ and $y$ with the projection point are tangent to the curve in certain points $x'$ and $y'$ respectively.   

Assume by contradiction that there is a positive dimensional family of possible $q$'s. Given a particular immersion determined by one such $q$, the projection point is determined by the intersection of the tangent to an inflexion point and the lines through $x$ and $y$ tangent to the curve. If we move $x$ and $y$ by a point $z\in E\subset \mathbb{P}^2$, we change the immersion of the curve, but we keep the same planar equation. If there is a one dimensional family of suitable morphisms $q$, then the point where the new tangents through $x+z$ and $y+z$ intersect should be over the tangent to the inflexion point, giving another morphism $q$ in the family. Doing the effective computations we find that for a general $z$ it does not happen, and hence, there are only a finite number of suitable $q$'s. $\Diamond$

(D6.10) $\boldsymbol{t=4}$, $\boldsymbol{r=0}$, $\boldsymbol{p=0}$, $\boldsymbol{s=8}$, $\boldsymbol{k=0}$. 

The generating vector $\left(i, i, j,j,j,j\right)$ satisfies all conditions.

\textbf{Claim:} The map $\eta$ has $1$-dimensional fibers.

\emph{Proof of the claim:} Given a curve $[C]$ in the image of $\mathcal{D}$, we recover $\eta^{-1}([C])$ taking the image of the branch points of its bielliptic morphism by a suitable $q$ together with the other four branch points of $q$. By a suitable $q$ we mean that the branch points of $p$  are non ramified lying over a branch point, and $q$ has generic ramification.

In the following figure we show the ramification of the morphisms $p$ and $q$. 
\begin{center}
\begin{tikzpicture}[line cap=round,line join=round,>=triangle 45,x=0.32cm,y=0.32cm]
\clip(-4,-7) rectangle (19,1.5);
\draw (16,0)-- (16,-7);
\draw (15,1.5) node[anchor=north west] {$\mathbb{P}^1$};
\draw (7,1.5) node[anchor=north west] {$E$};
\draw (-2,1.5) node[anchor=north west] {$C$};
\draw [->,color=black] (1,-2) -- (6,-2);
\draw [->] (10,-2) -- (14,-2);
\draw (2,-0.7) node[anchor=north west] {$_{(2:1)}$};
\draw (10.5,-0.7) node[anchor=north west] {$_{(3:1)}$};
\draw (11,-2) node[anchor=north west] {$\boldsymbol{q}$};
\draw (2.5,-2) node[anchor=north west] {$p$};
\draw (16.2,-5.3) node[anchor=north west] {$_{i}$};
\draw (16.2,-4.3) node[anchor=north west] {$_{i}$};
\draw (16.2,-3.3) node[anchor=north west] {$_{j}$};
\draw (16.2,-2.3) node[anchor=north west] {$_{j}$};
\draw (16.2,-1.3) node[anchor=north west] {$_{j}$};
\draw (16.2,-0.3) node[anchor=north west] {$_{j}$};
\draw (14.5,-5.5) node[anchor=north west] {$_{0}$};
\draw (14.5,-4.5) node[anchor=north west] {$_{1}$};
\draw (14.3,-3.5) node[anchor=north west] {$_{x_1}$};
\draw (14.3,-2.5) node[anchor=north west] {$_{x_2}$};
\draw (14.3,-1.5) node[anchor=north west] {$_{x_3}$};
\draw (14.3,-0.5) node[anchor=north west] {$_{\infty}$};
\begin{scriptsize}
\fill [color=black] (-3,-1) circle (1.5pt);
\fill [color=black] (-3,-2) circle (1.5pt);
\fill [color=black] (-3,-3) circle (1.5pt);
\fill [color=black] (-3,-4) circle (1.5pt);
\fill [color=black] (-2,-1) circle (1.5pt);
\fill [color=black] (-2,-2) circle (1.5pt);
\fill [color=black] (-2,-3) circle (1.5pt);
\fill [color=black] (-2,-4) circle (1.5pt);
\fill [color=black] (-2,-5) circle (1.5pt);
\fill [color=black] (-2,-6) circle (1.5pt);
\fill [color=black] (-1,-1) circle (1.5pt);
\fill [color=black] (-1,-2) circle (1.5pt);
\fill [color=black] (-1,-3) circle (1.5pt);
\fill [color=black] (-1,-4) circle (1.5pt);
\fill [color=black] (-1,-5) circle (1.5pt);
\fill [color=black] (-1,-6) circle (1.5pt);
\fill [color=black] (-1,-2) circle (1.5pt);
\fill [color=black] (0,-1) circle (1.5pt);
\fill [color=black] (0,-2) circle (1.5pt);
\fill [color=black] (0,-3) circle (1.5pt);
\fill [color=black] (0,-4) circle (1.5pt);
\draw  (-3,-6) node {$\diamondsuit$};
\draw  (-3,-5) node {$\diamondsuit$};
\fill [color=black] (7,-1) circle (1.5pt);
\fill [color=black] (7,-2) circle (1.5pt);
\fill [color=black] (7,-3) circle (1.5pt);
\fill [color=black] (7,-4) circle (1.5pt);
\draw  (7,-6) node {$\diamondsuit$};
\draw  (7,-5) node {$\diamondsuit$};
\draw (8,-1) node {$\bowtie$};
\draw (8,-2) node {$\bowtie$};
\draw (8,-3) node {$\bowtie$};
\draw (8,-4) node {$\bowtie$};
\draw (8,-5) node {$\bowtie$};
\draw (8,-6) node {$\bowtie$};
\draw (16,-6) node {$\bowtie$};
\draw (16,-5) node {$\bowtie$};
\draw (16,-3) node {$\bowtie$};
\draw (16,-4) node {$\bowtie$};
\draw (16,-2) node {$\bowtie$};
\draw (16,-1) node {$\bowtie$};
\end{scriptsize}
\end{tikzpicture}
\end{center}

Such a morphism $q$ exists by construction, and we claim that there is a one dimensional family of possibilities for $q$. 

Indeed, the elliptic curve $E$ is given, with two marked points $x$ and $y$, and we are looking for a $q:E\rightarrow \mathbb{P}^1$ with generic ramification and the two marked points over a branch point but non-ramified. 

Each immersion of $E$ in $\mathbb{P}^2$ is given by a line bundle $a\in \mathrm{Pic}^3(E)$. If we consider the projection $\pi:E^{(3)}\rightarrow \mathrm{Pic}^3(E)$, the fibers of this morphism are $\mathbb{P}^2$'s given by the linear series. A morphism to $\mathbb{P}^1$ of order $3$ can be seen as a line in this $\mathbb{P}^2$ with no base point, that is, not contained in any divisor $E_x$.

Given two points $x,y\in E$, for each $\mathbb{P}^2=\pi^{-1}(a)$ we have four points of type $x+2x'$ and four of type $y+2y'$, hence, there are $16$ lines that contain one of each type. In this same fiber of $\pi$ there are $9$ points of type $3Q$. Therefore, for $a$ general, at least one of the $16$ lines through  $x+2x'$ and $y+2y'$ will not contain a point of type $3Q$, and therefore, we deduce that given $x,y\in E$ there is a $1$-dimensional family ($\mathrm{dim}\,\mathrm{Pic}(E)=1$) of morphisms of degree 3 from $E$ to $\mathbb{P}^1$ with generic branching type and $x,y$ non ramified but with image a branch point.  

Hence, we can recover a one dimensional family of $(D, \rho)\in \mathcal{D}$. Since for each $D$ we find a different $B$, there is a one dimensional family of curves $\widetilde{B}\subset C^{(2)}$ for each $[C]\in \eta(\mathcal{D})$.$\Diamond$

\vspace{5pt}

\textbf{Assume finally that $\boldsymbol{l=2}$.} Assume $\langle i,j \rangle=D_4$.  

First, we observe that we can embed $D_4$ in $S_4$ in such a way that $i$ is odd and $j$ is even. Since we will need the product one condition when constructing the generating vector, we need to impose $\frac{t}{2}+\frac{r}{2}$ to be even, or what is the same, $t+r$ to be multiple of four. Moreover, we can also embed $D_4$ in $S_4$ in such a way that $i$ is even and $j$ is odd hence we impose $s+r$ to be multiple of four.

Since in $D_4$ we have five conjugacy classes $[1],\ [i],\ [j],\ [ij]\ \textrm{and}\ [(ij)^2]$, all branch points in $D\rightarrow D/D_4$ will be considered in either $t,s,r$ or $k$, thus the Riemann-Hurwitz formula reads $2h-2=-16+2s+2t+3r+k$.

The condition $g\geq 2$ is equivalent to $s\leq 2h-6$, which with the above expression of $h$ becomes
\begin{equation}\label{condg2}
s\leq -14+2s+2t+3r+k-6 \Leftrightarrow 20\leq s+2t+3r+k.
\end{equation}

We have seen that necessarily $t+r\leq 6$. By our previous conditions and considering that we need a compatible generating vector, we find the following three possibilities:

(D4.1) $\boldsymbol{t=0}$, $\boldsymbol{r=4}$. We need that $s>0$ to obtain a generating vector. Since $s+r$ should be multiple of four, we obtain that $s$ is multiple of four. From \eqref{condg2} we deduce that $8\leq s+k$. 

Depending on the parity of $\frac{k}{4}$ we consider the following generating vector:

\begin{itemize}
\item If $\frac{k}{4}$ is even: $\left(ij, ji, {j,\stackrel{s/2}{\dots}, j} , {(ij)^2,\stackrel{k/4}{ \dots}, (ij)^2}\right)$.

\item If $\frac{k}{4}$ is odd: $\left(ij, ji, {j,\stackrel{s/2-1}{\dots}, j}, j(ij)^2, {(ij)^2, \stackrel{k/4}{\dots}, (ij)^2}\right)$.
\end{itemize}

We note that $\nu(\beta_B)=\frac{k+4}{2}$ and hence, we obtain that $g(B/\langle \beta_B\rangle )=\frac{s}{4}\geq 1$.

(D4.2) $\boldsymbol{t=2}$, $\boldsymbol{r=2}$. Since $s+r$ should be multiple of four, we obtain that $s=4\alpha+2$ with $\alpha\in \mathbb{Z}_{\geq 0}$. From \eqref{condg2} we deduce that $10\leq s+k$. 

Depending on the parity of $\frac{k}{4}$ we consider the following generating vector:

\begin{itemize}
\item If $\frac{k}{4}$ is even: $\left(i, ij, {j,\stackrel{s/2}{\dots}, j} , {(ij)^2, \stackrel{k/4}{\dots}, (ij)^2}\right)$.

\item If $\frac{k}{4}$ is odd: $\left(i, {j,\stackrel{s/2}{\dots}, j}, ij, {(ij)^2, \stackrel{k/4}{\dots}, (ij)^2}\right)$.
\end{itemize}

We note that $\nu(\beta_B)=\frac{k+2+2}{2}$ and hence $g(B/\langle \beta_B\rangle )=\frac{s-2}{4}\geq 0$.

(D4.3) $\boldsymbol{t=4}$, $\boldsymbol{r=0}$. We need that $s>0$ to generate. Since $s+r$ should be multiple of four, we obtain that $s$ is multiple of four. From \eqref{condg2} we deduce that $12\leq s+k$. 

Depending on the parity of $\frac{k}{4}$ we consider the following generating vector:

\begin{itemize}
\item If $\frac{k}{4}$ is even: $\left(i, i, {j,\stackrel{s/2}{\dots}, j} ,{(ij)^2,\stackrel{k/4}{ \dots}, (ij)^2}\right)$.

\item If $\frac{k}{4}$ is odd: $\left(i, i, {j,\stackrel{s/2-1}{\dots}, j}, j(ij)^2, {(ij)^2, \stackrel{k/4}{\dots}, (ij)^2}\right)$.
\end{itemize}

We note that $\nu(\beta_B)=\frac{k+4}{2}$ and hence, we obtain that $g(B/\langle \beta_B\rangle )=\frac{s-4}{4}\geq 0$.

Finally, we remark that for these three families we have that $\widetilde{B}^2=4$, the maximum possibility by the Hodge index theorem. Therefore, $\widetilde{B}$ is algebraically equivalent to two times a coordinate curve. Moreover, since a coordinate curve has positive self-intersection, the restriction of $\mathrm{Pic}^0(C^{(2)})$ to $\mathrm{Pic}^0(C_P)$ is injective. Then, since the restriction of $\widetilde{B}$ to a coordinate curve consists of two points, it is  linearly equivalent to the restriction of the sum of two coordinate curves. Then, the divisor $\widetilde{B}-(C_{P_1}+C_{P_2})$ is on $\mathrm{Pic}^0(C^{(2)})$ and restricted to $\mathrm{Pic}^0(C_P)$ it is zero, therefore, since this morphism is injective, we deduce that $\widetilde{B}$ is linearly equivalent to the sum of two coordinate curves, and hence $h^0(C^{(2)}, \mathcal{O}_{C^{(2)}}(\widetilde{B}))\geq 2$. 

These are all possible pairs $(C, \widetilde{B})$ with $\widetilde{B}\subset C^{(2)}$ with degree two and positive self-intersection. Therefore, we have finished the proof of Theorem \ref{clasdeg2}.

\section{Further comments}\label{sec:furthcomm}

\begin{remark}
Notice that in all cases $\widetilde{B}^2\leq p_a(\widetilde{B})-g(C)+2$, thus satisfying the inequality in Corollary 4.7 of \cite{MPP2} even if in the cases with $g(C)=2$ the surface $C^{(2)}$ is not of general type, and therefore the hypothesis are not fulfilled.
\end{remark}

We can give more information about the curves $D$ in relation to the curves $C$ and $B$.

\begin{proposition}
We have the following isogeny for the Jacobian variety of any of the curves $D$ that appear in Theorem \ref{clasdeg2}:
\[
J_D\approx J_C \times J_B \times J_{D/\langle ij \rangle}.
\]
\end{proposition}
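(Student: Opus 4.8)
The plan is to deduce the isogeny from an idempotent relation in the rational group algebra $\mathbb{Q}[G]$, where $G=\langle i,j\rangle\cong D_n$ acts (faithfully, being a subgroup of $\mathrm{Aut}(D)$) on $D$, via the Kani--Rosen theorem on idempotent relations and factors of Jacobians. Recall that for a subgroup $H\leq G$ the element $\varepsilon_H=\tfrac{1}{|H|}\sum_{h\in H}h\in\mathbb{Q}[G]$ acts on $J_D$ (through $\mathbb{Q}[G]\to\mathrm{End}^{0}(J_D)$) as a projector onto a subvariety isogenous to $J_{D/H}$, and that $\mathbb{Q}[G]\,\varepsilon_H\cong\mathbb{Q}[G/H]=\mathrm{Ind}_{H}^{G}\mathbf{1}$ as left $\mathbb{Q}[G]$-modules. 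The Kani--Rosen theorem then says: any identity $\sum_i a_i\,\mathrm{Ind}_{H_i}^{G}\mathbf{1}=0$ in the rational representation ring of $G$ produces an isogeny $\prod_{a_i>0}J_{D/H_i}^{a_i}\sim\prod_{a_i<0}J_{D/H_i}^{-a_i}$. So I would (1) establish the correct representation-ring identity for the three subgroups $\langle i\rangle,\langle j\rangle,\langle ij\rangle$, (2) read off the resulting isogeny, and (3) simplify it using what is already known about $D/G$.

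For step (1), the identity I expect to prove is
\[
\mathrm{Ind}_{\langle i\rangle}^{G}\mathbf{1}\;\oplus\;\mathrm{Ind}_{\langle j\rangle}^{G}\mathbf{1}\;\oplus\;\mathrm{Ind}_{\langle ij\rangle}^{G}\mathbf{1}\;\cong\;\mathbb{Q}[G]\;\oplus\;\mathbf{1}^{\oplus 2},
\]
which by semisimplicity of $\mathbb{Q}[G]$ is the same as the corresponding equality of characters. I would verify it by computing isotypic multiplicities with Frobenius reciprocity. In all families of Theorem \ref{clasdeg2} possessing a curve $D$ the integer $n$ is even, so $D_n$ has four linear characters and $(n-2)/2$ irreducibles of dimension two, and $i$, $j$ lie in the two different conjugacy classes of reflections — precisely the source of the asymmetry between $\nu(i)$ and $\nu(j)$ used in Section \ref{proofthm}. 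A linear character $\chi$ has $\dim\chi^{\langle i\rangle}=1$ iff $\chi(i)=1$, similarly for $j$, and for the rotation subgroup $\langle ij\rangle$ exactly the two characters factoring through $G/\langle ij\rangle\cong\mathbb{Z}/2$ are trivial; every two-dimensional irreducible $\rho$ satisfies $\dim\rho^{\langle i\rangle}=\dim\rho^{\langle j\rangle}=1$ and $\dim\rho^{\langle ij\rangle}=0$. Summing, the left-hand side contains $\mathbf{1}$ with multiplicity $3$, each non-trivial linear character exactly once, and each two-dimensional $\rho$ exactly twice; comparing with the regular representation (each linear character once, each $\rho$ twice) gives exactly the displayed identity. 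The same bookkeeping, now with $\langle i\rangle$ and $\langle j\rangle$ conjugate, also handles odd $n$, should one want the statement in full generality.

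Feeding this into the Kani--Rosen theorem, and recognizing $\mathrm{Ind}_{\{1\}}^{G}\mathbf{1}$ as the regular representation (with $J_{D/\{1\}}=J_D$) and $\mathrm{Ind}_{G}^{G}\mathbf{1}=\mathbf{1}$ (with $J_{D/G}\sim\varepsilon_G J_D$), I obtain
\[
J_C\times J_B\times J_{D/\langle ij\rangle}\;\sim\;J_D\times (J_{D/G})^{2},
\]
using $C=D/\langle j\rangle$ and $B=D/\langle i\rangle$. To finish I would invoke that $D/G=D/D_n\cong\mathbb{P}^1$ in every relevant family: this is exactly the conclusion $\gamma=g(D/D_{2l})=0$ obtained in the proof of Theorem \ref{clasdeg2} from the chain of inequalities following \eqref{eq:claim}. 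Hence $J_{D/G}=0$ and the isogeny collapses to $J_D\approx J_C\times J_B\times J_{D/\langle ij\rangle}$, as claimed.

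The argument is essentially a computation, so I do not expect a serious obstacle; the step needing most care is the multiplicity count in the key identity, where one must correctly handle the two conjugacy classes of reflections in $D_n$. The one non-formal ingredient is $g(D/D_n)=0$, which is what makes $J_{D/\langle ij\rangle}$ the \emph{only} factor beyond $J_C$ and $J_B$; without it the general statement would read $J_D\times (J_{D/D_n})^{2}\approx J_C\times J_B\times J_{D/\langle ij\rangle}$.
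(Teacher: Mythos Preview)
Your argument is correct. Both you and the paper invoke Kani--Rosen together with the fact $D/D_n\cong\mathbb{P}^1$ established in the proof of Theorem~\ref{clasdeg2}, but the specific idempotent relation differs. The paper writes $D_n$ as the union of the rotation subgroup $\langle ij\rangle$ and all $n$ order-two reflection subgroups $\langle (ij)^k i\rangle$, applies the subgroup-covering form of Kani--Rosen to obtain an isogeny between $n$-th powers, then groups the reflection quotients via the conjugacies $(ij)^{2k}i\sim i$ and $(ij)^{2k+1}i\sim j$, and finally cancels the common exponent $n$ by Poincar\'e complete reducibility. Your route is more economical: you produce directly the character identity $\mathrm{Ind}_{\langle i\rangle}^{G}\mathbf{1}\oplus\mathrm{Ind}_{\langle j\rangle}^{G}\mathbf{1}\oplus\mathrm{Ind}_{\langle ij\rangle}^{G}\mathbf{1}\cong\mathbb{Q}[G]\oplus\mathbf{1}^{\oplus 2}$, so the isogeny appears in its final form $J_C\times J_B\times J_{D/\langle ij\rangle}\sim J_D\times J_{D/G}^{2}$ without any exponent to cancel. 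The paper's version avoids the Frobenius-reciprocity bookkeeping and is closer to the original statement in \cite{KR}; yours isolates exactly the three subgroups that matter and makes transparent that the only extra input is $g(D/D_n)=0$.
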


\begin{proof}
We can decompose $D_n=\langle i,j \rangle $, with $i$ and $j$ two involutions, as 
\[
D_n=\langle ij \rangle \cup \langle i \rangle  \cup \langle (ij)i \rangle \cup \langle (ij)^2i \rangle\cup \dots  \cup \langle (ij)^{n-1}i \rangle.
\]

In our case $D/D_n\cong \mathbb{P}^1$, so by \cite{KR} with $t=n+1$ we obtain that
\[
J_D^{n}\approx J_{D/\langle ij \rangle}^{n} \times J_{D/\langle i \rangle}^{2}\times J_{D/\langle (ij)i \rangle}^{2}\times \dots \times J_{D/\langle (ij)^{n-1}i \rangle}^{2} .
\]

Since $i$ is a conjugate of all $(ij)^{2k}i$ and $j$ is a conjugate of all $(ij)^{2k+1}i$ we deduce that 
\[
\begin{array}{cc}
J_{D/\langle i \rangle} \cong J_{D/\langle (ij)^{2k}i \rangle}\cong J_B & J_{D/\langle j \rangle} \cong J_{D/\langle (ij)^{2k+1}i \rangle}\cong J_C.
\end{array}
\] 
Therefore,
\[
J_D^{n}\approx J_{D/\langle ij \rangle}^{n} \times J_{B}^{n}\times J_{C}^{n}
\]
and applying Poincar\'e duality we obtain the stated isogeny.
\end{proof}

\subsection{Degree $d$ curves in $C^{(2)}$ with arithmetic genus in the Brill-Noether range}

Now, we consider again the question about curves with arithmetic genus in the Brill-Noether range and positive self-intersection. Let $(C, \widetilde{B})$ be a pair of curves with $C$ smooth, such that $\widetilde{B}\subset C^{(2)}$ with degree $d$,  
\begin{equation}\label{cond}
q(C^{(2)})=g(C)<p_a(\widetilde{B})<2g(C)-1
\end{equation}
and $\widetilde{B}$ has positive self-intersection.

By the Castelnuovo-Severi inequality (\cite{Acc2}), for a diagram of curves as in Theorem \ref{charcurves} the following inequality is satisfied:
\begin{equation}\label{CSI}
g(D)\leq 2g(B)+dg(C)+d-1.
\end{equation}
Thus, we have a necessary condition for given curves $C, B$ and $D$ to lay in a diagram as in Theorem \ref{charcurves}. 

Moreover, when $d$ is a prime number, by \cite[Theorem 3.2]{Acc} we have a criteria to decide if a curve completing the diagram can exist. That is, a curve $F$ completing the diagram must satisfy 
\[
g(D)+2dg(F)\leq 2g(B)+dg(C)+d-1.
\]
When this inequality is not satisfied, the diagram does not complete and hence by Theorem \ref{charcurves}, there is a degree one map $B\rightarrow C^{(2)}$ with image $\widetilde{B}$.

By Lemma \ref{selfint} and inequality \eqref{CSI} we get that
\[
\begin{array}{c}
g(D)=\widetilde{B}^2+1+2d(g(C)-1)-(p_a(\widetilde{D})-g(D))\leq 2g(B)+dg(C)+d-1 \\[2mm]
\Leftrightarrow \widetilde{B}^2\leq 2g(B)+d(3-g(C))-2+(p_a(\widetilde{D})-g(D)).
\end{array}
\]

By \eqref{cond}, necessarily $g(B)\leq p_a(\widetilde{B})\leq 2g(C)-2$, so we obtain that for $g(C)\geq 4$ 
\[
d\leq \frac{4g(C)-6-\widetilde{B}^2+(p_a(\widetilde{D})-g(D))}{g(C)-3}.
\]

Hence, for a fixed $g(C)$, we have a relation between the self-intersection of $\widetilde{B}$, its degree and the singularities of $\widetilde{D}$. 

Assume that $\widetilde{B}$ is smooth. Then, we have a diagram of curves
\[
\xymatrix{
\widetilde{D} \ar[r]^{(2:1)} \ar[d]^{(d:1)}& B \\
C &
}
\]
that induces a map $p:\widetilde{D}\rightarrow C\times B$. Since $2$ is a prime number, $p$ is birational into its image. Hence, $p_a(\widetilde{D})\leq p_a(i(\widetilde{D}))$.

By adjunction in $C\times B$ and the Castelnuovo-Severi inequality we get that 
\[
p_a(i(\widetilde{D}))\leq 2g(B)+dg(C)+d-1.
\]
Hence, by Lemma \ref{selfint} and inequality \eqref{cond}, we obtain that for $g(C)\geq 4$
\[
d\leq \dfrac{4g(C)-7}{g(C)-3}.
\]
Thus, for $g(C)\geq 9$ a curve with arithmetic genus in the Brill-Noether range and positive self-intersection should have degree at most $4$. This inequality motivates the study of curves in $C^{(2)}$ with low degree.

By Theorem \ref{clasdeg2} we obtain a negative answer for the existence of such curves with degree $2$.

\begin{corollary}
There are no pairs of curves $(C, \widetilde{B})$ with $C$ smooth and $\widetilde{B}\subset C^{(2)}$ with degree two, $g(C)<p_a(\widetilde{B})<2g(C)-1$ and $\widetilde{B}^2>0$.
\end{corollary}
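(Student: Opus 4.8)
The plan is to deduce the corollary directly from the Classification Theorem \ref{clasdeg2}, so the only real work is bookkeeping. First I would observe that the hypotheses of the corollary ($C$ smooth, $\widetilde{B}\subset C^{(2)}$ with $\widetilde{B}\cdot C_P=2$ and $\widetilde{B}^2>0$) are exactly the hypotheses of Theorem \ref{clasdeg2}, supplemented by the extra requirement $g(C)<p_a(\widetilde{B})<2g(C)-1$. Therefore any pair $(C,\widetilde{B})$ satisfying the corollary's hypotheses must occur in one of the cases listed in Theorem \ref{clasdeg2}, and it suffices to check that none of those cases meets the extra requirement.

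Next I would run through the classification and record $p_a(\widetilde{B})$ in terms of $g(C)$ in each case. In case $0$, Proposition \ref{redtwo} gives $p_a(\widetilde{B})=g(C)+\frac{1}{2}(\nu(\alpha^2)-\nu(\alpha))=2+\frac{1}{2}(3-1)=3=2g(C)-1$. For $n=10$, the final clause of that block states $p_a(\widetilde{B})=2g(C)-1$ for the three families (D10.1)--(D10.3). For $n=6$, the nine families (D6.2)--(D6.10) have $p_a(\widetilde{B})=2g(C)-1$ and the remaining family (D6.1) has $p_a(\widetilde{B})=2g(C)$. For $n=4$, all three families (D4.1)--(D4.3) have $p_a(\widetilde{B})=2g(C)$. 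Hence in every case $p_a(\widetilde{B})\geq 2g(C)-1$.

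Finally, since $g(C)\geq 2$ throughout, the inequality $p_a(\widetilde{B})\geq 2g(C)-1$ is incompatible with the strict bound $p_a(\widetilde{B})<2g(C)-1$; this contradiction shows no such pair exists, which is the assertion of the corollary. I do not expect any genuine obstacle here: the substance of the statement is entirely contained in Theorem \ref{clasdeg2}, and the proof is just the remark that $p_a(\widetilde{B})$ never lands strictly inside the Brill-Noether range $(g(C),2g(C)-1)$ in any of the families. The only points requiring care are to treat case $0$ separately (it is not phrased through the dihedral construction) and to read the ``Furthermore'' clauses attached to each block of the table correctly, so that the value of $p_a(\widetilde{B})$ is not misattributed among the families.
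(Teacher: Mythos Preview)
Your proposal is correct and follows exactly the approach implicit in the paper: the corollary is stated there as an immediate consequence of Theorem~\ref{clasdeg2}, and you have simply spelled out the case-by-case verification that $p_a(\widetilde{B})\in\{2g(C)-1,2g(C)\}$ in every family (including case~0), so the strict upper bound $p_a(\widetilde{B})<2g(C)-1$ is never met. The only superfluous remark is the clause ``since $g(C)\geq 2$ throughout'': the contradiction $p_a(\widetilde{B})\geq 2g(C)-1$ versus $p_a(\widetilde{B})<2g(C)-1$ needs no hypothesis on $g(C)$.
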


\subsection{Curves in $C\times C$ with arithmetic genus in the Brill-Noether range}

We remark finally that some of the curves $\widetilde{D}\subset C\times C$ have, in fact, arithmetic genus in the Brill-Noether range and positive self-intersection. Indeed, since by definition $\widetilde{D}$ is the preimage of $\widetilde{B}$ by $\pi_C$, we have
\[
\widetilde{D}^2=(\pi_C^*\widetilde{B})^2=\pi_C^*(\widetilde{B}^2)={\rm deg}\,\pi_C\ \widetilde{B}^2=2\widetilde{B}^2
\]
and moreover 
\[
q(C\times C)=2g\qquad  p_a(\widetilde{D})=g(D)+\frac{1}{2}(r+k).
\]

Therefore, if the curve $\widetilde{B}$ has positive self-intersection, so does $\widetilde{D}$. Moreover, if $2g< h+ \frac{1}{2}(r+k) \leq 4g-2$, then $\widetilde{D}$ has arithmetic genus in the Brill-Noether range and positive self-intersection.

We look at Theorem \ref{clasdeg2} and note that these inequalities are satisfied in all cases defined by the action of $D_{10}$, the four cases defined by the action of $D_6$ with $g(C)=3$ and all the cases defined by the action of $D_4$.

Therefore, we have found some examples of curves with arithmetic genus in the Brill-Noether range and positive self-intersection in an irregular surface. We note that all our examples, as well as those found in \cite{PiAl}, satisfy $p_a(\widetilde{D})=2q(S)-2$. That is, the curves are on the boundary of the Brill-Noether range.

\vskip 15pt

\textbf{Acknowledgements}
The author thanks Rita Pardini for all the time and useful suggestions given during the author's stay in the University of Pisa and afterwards. Many thanks also to Gian Pietro Pirola for suggesting us to look into $C\times C$ for curves with arithmetic genus in the Brill-Noether range. The most sincere gratitude to Miguel Angel Barja and Joan Carles Naranjo for the multiple discussions and the huge amount of time devoted to the development of this article. To the anonymous referee for multiple suggestions that greatly improved the exposition. And finally to the Universitat de Barcelona for the research grant and their hospitality afterwards.

The author has been partially supported by the Proyecto de Investigaci\'on MTM2012-38122-C03-02.

\bibliographystyle{plain}
\bibliography{Saez_bibliography}

\end{document}